\newtheorem{theorem}{\bf Theorem}[section]
\newtheorem{lemma}[theorem]{\bf Lemma}
\newtheorem{proposition}[theorem]{\bf Proposition}
\newtheorem{corollary}[theorem]{\bf Corollary}
\newtheorem{definition}[theorem]{\bf Definition}
\newenvironment{proof}{\noindent{\em Proof:}}{\quad \hfill$\Box$\vspace{2ex}}
\newcommand\bcdot{\ensuremath{%
  \mathchoice%
   {\mskip\thinmuskip\lower0.2ex\hbox{\scalebox{1.5}{$\cdot$}}\mskip\thinmuskip}}%
   {\mskip\thinmuskip\lower0.2ex\hbox{\scalebox{1.5}{$\cdot$}}\mskip\thinmuskip}%
   {\lower0.3ex\hbox{\scalebox{1.2}{$\cdot$}}}%
   {\lower0.3ex\hbox{\scalebox{1.2}{$\cdot$}}}%
}
\def \bC{\mathbb{C}}
\def \bO{\mathcal{O}}
\def \bN {\mathbb N}
\def \bZ {\mathbb Z}
\def \aA {\mathbb A}
\def \aU {\mathbb U}
\def \aV {\mathbb V}
\def \bK {\mathcal{K}}
\def \bP {\mathcal{P}}
\def \bA {\mathcal{A}}
\def \bT {\mathcal{T}}
\def \bL {\mathcal{L}}
\def \bI {\mathcal{I}}
\def \bC {\mathcal{C}}
\def \BE {\textbf{E}}
\def \BK {\textbf{K}}
\def \BA {\textbf{A}}
\def \Bt {\textbf{t}}
\def \Bb {\textbf{b}}
\def \BG {\textbf{G}}
\def \Bf {\textbf{f}}
\def \Ba {\textbf{a}}
\begin{document}
\begin{CJK}{GBK}{song}

\title{\bf Oscillation Preserving Galerkin Methods for Fredholm Integral Equations of the Second Kind with Oscillatory Kernels}

\author{
    Yinkun Wang
    \footnotemark[2]
    \footnotemark[4] \
and
    Yuesheng Xu 
    \footnotemark[3]
    \footnotemark[4]}

\renewcommand{\thefootnote}{\fnsymbol{footnote}}

\footnotetext[2]{College of Science, National University of Defense Technology, Changsha 410073, People's Republic of China. Email: yinkun5522@163.com.}
\footnotetext[3]{Guangdong Province Key Lab of Computational Science, School of Mathematics and Computational Science, Sun Yat-Sen University, Guangzhou 510275, People's Republic of China. All correspondence should be sent to this author.}
\footnotetext[4]{Department of Mathematics, Syracuse University, Syracuse, NY 13244, USA. Email: yxu06@syr.edu.}

\date{January 18, 2015}
\maketitle

\begin{abstract}
{
Solutions of Fredholm integral equations of the second kind with oscillatory kernels likely exhibit oscillation. Standard numerical methods applied to solving equations of this type have poor numerical performance due to the influence of the highly rapid oscillation in the solutions. Understanding of the oscillation of the solutions is still inadequate in the literature and thus it requires further investigation. For this purpose, we introduce a notion to describe the degree of oscillation of an oscillatory function based on the dependence of its norm in a certain function space on the wavenumber. Based on this new notion, we construct structured oscillatory spaces with oscillatory structures. The structured spaces with a specific oscillatory structure can capture the oscillatory components of the solutions of Fredholm integral equations with oscillatory kernels. We then further propose oscillation preserving Galerkin methods for solving the equations by incorporating the standard approximation subspace of spline functions with a finite number of oscillatory functions which capture the oscillation of the exact solutions of the integral equations. We prove that the proposed methods have the optimal convergence order uniformly with respect to the wavenumber and they are numerically stable. A numerical example is presented to confirm the theoretical estimates.

}
\end{abstract}

\textit{Keywords}: oscillation preserving; oscillatory integral equation; Galerkin method.


\section{Introduction}
\label{sec;1}

We consider in this paper numerical solutions of Fredholm integral equations of the second kind with a highly oscillatory kernel. Specifically, the kernel is a product of a non-oscillatory smooth function and a typical known oscillatory function. We assume here that the forcing function is highly oscillatory with its oscillation being a combination of harmonic waves. Such an assumption is motivated from the plane waves which are used as incident waves in electromagnetic scattering problems. The solutions of the integral equations may possess certain oscillation and the error of numerical solutions by standard numerical methods may be greatly affected when the oscillation behaves rapidly. Conventional methods fail to solve equations of this type. This demands a better understanding of the solutions and devise new numerical methods in solving them.

We choose this type of oscillatory integral equations to study here because they resemble certain equations of optics and acoustics \cite{URSELL1969} and on the other hand they are simple enough to be rigorously analyzed to have a good understanding on the oscillation of their solutions. This paper is a beginning effort on the understanding of oscillation of the solution of highly oscillatory integral equations and the analysis methods proposed here may supply a potential tool from the mathematical viewpoint to understand highly oscillatory phenomena in scientific and engineering applications such as in electromagnetics and acoustics \cite{COLTON2013} and in laser theory \cite{BRUNNER2010,BRUNNER20102,BRUNNER2011}.


We remark that several other authors have considered the same integral equation \cite{URSELL1969,BRUNNER2010}. They mainly presented the asymptotic properties of its solution as the wavenumber tends to the infinity. In \cite{URSELL1969}, the asymptotic behavior of the solution was studied with the help of two associated Volterra equations and it was proved that the maximum norm of the solution is bounded by the product of the maximum norm of the right hand side function of the equation and a constant independent of the oscillation of the kernel. A refined result on the asymptotic behavior was presented in \cite{BRUNNER2010} with the assumption that the non-oscillatory function in the kernel is smooth and the right hand side function is non-oscillatory and smooth. The analysis was conducted through the Neumann series associated with the integral equation. Though there is no reference on the numerical solution of this kind of integral equation, two possible ways were proposed for the case of the non-oscillatory function in the kernel being $1$ in \cite{BRUNNER2010}. One way is to solve the integral equations by using the eigenvalues of the integral operator and its corresponding eigenfunctions. The shortcoming of this method is that it needs extra cost to compute eigenvalues and eigenfunctions. The other way is to approximate directly by the truncated Neumann expansion. This method has the poor convergence when the oscillation of the kernel is moderate. Both the methods may be difficult to implement, especially when the non-oscillatory factor of the kernel becomes complicated.

We now review a hybrid numerical method for solving boundary integral equations of the boundary value problems of Helmholtz equations (for example, see \cite{CHANDLER2012} and references therein). The hybrid method combines the conventional piecewise polynomial approximation with high-frequency asymptotics to build the basis functions suitable to represent their oscillatory solutions. The idea was first proposed in \cite{THIELE1975} in study of the electromagnetic characterization of wire antennas on or near a three-dimensional metallic surface. An ansatz of the high-frequency asymptotics is the base for the hybrid methods and it may be derived through the high frequency physical optics (or Kirchhoff) approximation \cite{KIRCHHOFF}, more precisely by the geometrical theory of diffraction \cite{KELLER1995} or by combining the former approximation with microlocal analysis \cite{MELROSE1985}. The ansatz indicates that the solutions of Helmholtz equations may be represented in terms of the product of an explicit oscillatory function and a less oscillatory unknown amplitude in each zone. The solutions of Helmholtz equations can be obtained by approximating the amplitude with a conventional integral method such as the Nystr\"{o}m method \cite{BRUNO2004}, the collocation method \cite{GILADI2001} and the Galerkin method \cite{DOMINGUEZ2007,CHANDLER2004}. Another method in solving Helmholtz equations is the partition of unity in which a number of plane waves is introduced on each element in addition to standard piecewise polynomial boundary elements \cite{BOUR1994,DEBAIN2003}. This method does not require any priori knowledge of the asymptotics of the solution which may result in the loss of uniform accuracy with respect to the wavenumber. To maintain the effective error, the degrees of freedom need to increase in proportion to the power of the wavenumber as the wavenumber tends to the infinity, just as for conventional methods, albeit with a lower constant of proportionality.

There was a recent development in solving the oscillatory Volterra integral equation  \cite{BRUNNER2014,WANG2011,XIANG2011,XIANG2013}. A Filon-type method was proposed in \cite{WANG2011} for the numerical solution of the Volterra integral equation of the first kind with a highly oscillatory Bessel kernel, based on the fact that its solution has an explicit integral expression \cite{WANG2011}. A Clenshaw-Curtis-Filon-type method was developed for computing the highly oscillatory Bessel transform and was used in solving the oscillatory Volterra integral equation in \cite{XIANG2011}. A Filon-type method and two collocation methods were presented in \cite{XIANG2013} for weakly singular Volterra integral equations of the second kind with a highly oscillatory Bessel kernel. These methods were designed based on the asymptotic analysis of the solutions of the equations. More recently, it was studied in \cite{BRUNNER2014} the high-oscillation property of the solutions of the integral equations associated with two classes of Volterra integral operators: compact operators with highly oscillatory kernels that are either smooth or weakly singular and noncompact cordial Volterra integral operators with highly oscillatory kernels. However, numerical analysis of the Volterra integral equations with a highly oscillatory kernel remains a challenging problem.

Since solutions of weakly singular integral equations has inspired us in developing efficient numerical methods for solutions of integral equations with highly oscillatory kernels, it deserves to review some related work in the subject. In \cite{BRUNNER1983}, non-polynomial spline collocations for the Volterra integral equation of the second kind with a weakly singular kernel were proposed by making use of the fact that the singularity behavior of its solution can be captured by certain special non-polynomial functions. Singularity preserving projection methods were developed in \cite{CAO1994} for the Fredholm integral equation of the second kind with a weakly singular kernel. These methods gave an optimal order of convergence for the approximation solutions since the singularity preserving approximation space can preserve the singularity of the solutions. Hybrid collocation methods were proposed in \cite{CAO2003} and \cite{CAO2007}, respectively, for the Volterra and Fredholm integral equations with weakly singular kernels. The basic idea of these methods is to enrich the basis of a standard approximate space with specific singular functions which can accurately capture the singularity characteristics of the solutions.

In this paper, we make three main contributions to the literature of the solution of oscillatory integral equations.
We first introduce a new notion to measure the degree of oscillation for an oscillatory function and then construct  structured spaces with oscillatory structures. The notion of oscillation is defined based on the dependence of the norm of a function in a certain space on the wavenumber.
It reflects the effect of the oscillation of an oscillatory function on the accuracy of its approximation. Structured spaces with specific oscillatory structures can capture the oscillation of solutions of oscillatory integral equations.
 The element in such a space may be approximated by an appropriate finite dimensional approximation space with its error independent of the oscillation even though the element may be highly oscillatory.
We then explore the oscillatory property of the solutions of the oscillatory integral equations by using the iterated integral operators instead of the asymptotic property of the solutions. The solutions will be proved to be in a non-oscillatory structured space with a specific oscillatory structure. Since the solutions can be represented by the iterated integral operators, the oscillation of the solutions depends on two properties of the iterated integral operators: The non-oscillatory structured space is closed under the iterated integral operators and they can reduce the oscillatory degree of oscillatory functions in the sense of the new notion of oscillation.
Finally, we develop the oscillation preserving Galerkin methods (OPGM) to solve the oscillatory integral equations based on the understanding of the oscillation of their solutions. The introduction of the methods is inspired by the singularity-preserving methods developed for solving the singular Fredholm integral equations of the second \cite{CAO1994} and the hybrid methods used in solving the scattering problems. The proposed methods have the optimal convergence order uniform with respect to the wavenumber and they are numerically stable when the wavenumber is large enough.

This paper is organized in eight sections. In section 2, we analyze
the oscillatory property of the solution of the oscillatory integral
equation of the second kind and the effect of the possible
oscillation of the solution on the accuracy of the conventional
method in solving the equation. We propose in section 3 a notion to
characterize the oscillatory degree of an oscillatory function. We
then construct the corresponding oscillatory spaces and structured
spaces with oscillatory structures. We present in section 4 special
results of the structured oscillatory space associated with the
Sobolev space. Specifically, we show that it is closed under a set
of oscillatory Fredholm integral operators. In section 5, we show
that the solutions of the oscillatory integral equations belong to
the structured oscillatory space associated with the Sobolev space.
In section 6, we develop the OPGM for solving the equation and prove
the convergence order (uniform with respect to the wavenumber) of
the proposed method. We analyze in section 7 the stability of a
special OPGM which uses the B-spline basis on a uniform partition.
In Section 8, we discuss the computational implementation of the
OPGM. A numerical example is given to illustrate the numerical
efficiency and accuracy of the proposed method in comparison with
the conventional Galerkin method.

\section{Solutions of Oscillatory Fredholm Integral Equations}
\label{sec;2}

We study in this section the oscillatory property of the solution of the oscillatory Fredholm integral equation of the second kind.

We begin with describing the integral equation. Let $I:=[-1,1]$. By $C(I)$ we denote the space of continuous complex-valued functions on $I$ and $C(I^2)$ the space of continuous complex-valued bivariate functions on $I^2$.  Supposing that $K\in C(I^2)$ and $f\in C(I)$ we consider the oscillatory integral equation
\begin{equation}\label{R9E15}
    y(s)-\int_I K(s,t)e^{i\kappa |s-t|}y(t)dt=f(s), \ \ s\in I,
\end{equation}
where $\kappa$ is the parameter of wavenumber and $y$ denotes the solution to be determined. In highly oscillatory problems, $\kappa\gg 1$. In this paper, we assume that the range of $\kappa$ is $(1,\infty)$.
We also assume that $K$ is independent of $\kappa$ while $f$ may has some oscillation of wavenumber $\kappa$.
Defining the integral operator $\bK$ by
\begin{equation}\label{R18E10}
(\bK y)(s):=\int_I K(s,t)e^{i\kappa|s-t|}y(t)dt, \ \ s\in I,
\end{equation}
we may rewrite equation (\ref{R9E15}) in its operator form
\begin{equation}\label{R9E16}
    (\bI-\bK) y=f,
\end{equation}
where $\bI$ denotes the identity operator.
It is well-known that the integral operator $\bK$ is compact on $C(I)$ and if 1 is not an eigenvalue of $\bK$ for any $\kappa$, then equation (\ref{R9E16}) has a unique solution in $C(I)$.

We now study how the solution $y$ of equation (\ref{R9E16}) depends on the wavenumber $\kappa$. It was proved in \cite{URSELL1969} that there exist positive constants $c$ and $\kappa_0$ such that for all $\kappa\geq \kappa_0$
$$
\|y\|_\infty \leq c \|f\|_\infty.
$$
This estimate indicates that the oscillation of the solution $y$ depends completely on the oscillation of the right hand side function $f$ in the uniform norm. However, this result does not offer any information about the derivatives of the solution $y$.
For the purpose of efficiently solving equation (\ref{R9E16}), we are interested in understanding how the derivatives of its solution $y$ depend on the wavenumber $\kappa$.

Specifically, we shall bound the Sobolev norm of the solution $y$ above by the power of the wavenumber $\kappa$.
To this end, we let $L^2(I)$ denote the space of functions $u$ on the interval $I$ for which
$$
\|u\|_2:=\left(\int_I|u(t)|^2dt\right)^{1/2}< \infty.
$$
Without ambiguity, we also use the notation $\|\cdot\|_2$ to represent the $L^2$ norm of bivariate functions, that is, $$
\|L\|_2:=\left(\int_{I^2}|L(s,t)|^2dsdt\right)^{1/2}, \ \
\mbox{for}\ \ L\in L^2(I).
$$
We next review the notation of the Sobolev space. Let $\bN_0:=\bN\cup \{0\}$ and for $n\in\bN$ let $\bZ_{n}:=\{0,1,\ldots,n-1\}$. For $n\in\bN_0$, we let $H^n(I)$ denote the Sobolev space $\{u\in L^2(I):u^{(n)}\in L^2(I)\}$ with its norm defined by
$$
\|u\|_{H^n}:=\left(\sum_{j\in \bZ_{n+1}}\left\|u^{(j)}\right\|_2^2\right)^{1/2}.
$$
We denote by $H^n_\kappa(I)$ the space of $\kappa$-parameterized functions $u$ which satisfy that
$u\in H^n(I)$ for any $\kappa$ and
there exist positive constants $c$ and $\kappa_0$ such that for all $\kappa\geq\kappa_0$,
$$
\|u\|_{H^p}\leq c\kappa^p, \ \ \mbox{for all}\ \ p\in\bZ_{n+1}.
$$

Associated with the operator $\bK$,
we introduce two auxiliary operators from $L^2(I)$ to $C(I)$. Specifically, for $L_j\in C(I^2), j=1,2$, and $u\in L^2(I)$, we let
\begin{equation}\label{R18E1}
\left(\bA_\epsilon[L_1,L_2]u\right)(s):=\int_{-1}^sL_1(s,t)e^{i\kappa (s-t)}u(t)dt+(-1)^\epsilon \int_s^1L_2(s,t)e^{i\kappa (t-s)}u(t)dt, \ \ s\in I,
\end{equation}
with $\epsilon\in\{0,1\}$. If $L_1=L_2$, we write $\bA_\epsilon[L_1,L_2]$ as $\bA_\epsilon[L_1]$. When $L_1=L_2=K$, we have that $\bK=\bA_0[K]$. By the Cauchy-Schwarz inequality, we can obtain that
\begin{equation}\label{R11E15}
\|\bA_\epsilon[L_1,L_2]u\|_2\leq \sqrt{\left( \|L_1\|_2^2+\|L_2\|_2^2\right)}\|u\|_2, \;\epsilon\in\{0,1\}.
\end{equation}
We next present an auxiliary lemma on the derivatives of operators $\bA_\epsilon$. To this end, for $m,n\in\bN_0$ we let $C^{[m,n]}(I^2)$ denote the space of functions whose derivatives of order up to $m$ and $n$ with respect to the first and second variables, respectively, are continuous. When $m=n$, we write $C^{[m,n]}(I^2)$ as $C^{[m]}(I^2)$.
In the next lemma, we let $D:=\frac{d}{ds}$ and use $c$ as a generic constant whose value may change in its appearance.

\begin{lemma}\label{R11L7}
Let $n\in\bN_0$. If  $K\in C^{[n+1,0]}(I^2)$ is independent of $\kappa$ and $u\in H^n_\kappa(I)$, then there exist positive constants $c$ and $\kappa_0$ such that for all $\kappa\geq \kappa_0$,
\begin{equation}\label{R18E20}
\left\|D^l\left( \bA_\epsilon \left[ K^{(j,0)} \right]u \right) \right\|_2\leq c\kappa^l, \;j\in \bZ_{n+2-l},\; l\in \bZ_{n+2}, \epsilon=0,1.
\end{equation}
\end{lemma}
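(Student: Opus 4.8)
The plan is to prove \eqref{R18E20} by induction on $l$, the engine being a single differentiation identity for the operators $\bA_\epsilon$, combined with the Cauchy--Schwarz bound \eqref{R11E15} for the base case and the growth estimates $\|u^{(p)}\|_2\le c\kappa^p$, $p\le n$, that are built into the definition of $H^n_\kappa(I)$.

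The first step is to record the identity. Differentiating under the integral sign and keeping track of the moving endpoint $s$ gives, for any $L\in C^{[1,0]}(I^2)$ and $\epsilon\in\{0,1\}$,
\[
D\bigl(\bA_\epsilon[L]u\bigr)=\bA_\epsilon[L^{(1,0)}]u+i\kappa\,\bA_{1-\epsilon}[L]u+\bigl(1-(-1)^\epsilon\bigr)\,\Phi_{L,u},
\]
as an identity in $L^2(I)$, where $\Phi_{L,u}$ denotes the function $s\mapsto L(s,s)u(s)$. Here $\partial_s$ hitting $e^{\pm i\kappa(s-t)}$ produces the middle term together with a sign flip (hence the switch $\epsilon\mapsto 1-\epsilon$), while the endpoint contributions from the two integrals cancel when $\epsilon=0$ and add when $\epsilon=1$; in particular the boundary term is absent for $\bA_0$ and equals $2L(s,s)u(s)$ for $\bA_1$. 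For the base case $l=0$: since $K\in C^{[n+1,0]}(I^2)$, each $K^{(j,0)}$ with $j\in\bZ_{n+2}$ lies in $C(I^2)$, so $\|K^{(j,0)}\|_2<\infty$; then \eqref{R11E15} with $L_1=L_2=K^{(j,0)}$, together with $\|u\|_2=\|u\|_{H^0}\le c$, gives $\|\bA_\epsilon[K^{(j,0)}]u\|_2\le c$.

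For the inductive step, assume \eqref{R18E20} at level $l$ for all admissible $j$ and both values of $\epsilon$, and take $l+1\le n+1$, $j\le n-l$. Applying $D^l$ to the identity,
\[
D^{l+1}\bigl(\bA_\epsilon[K^{(j,0)}]u\bigr)=D^l\bigl(\bA_\epsilon[K^{(j+1,0)}]u\bigr)+i\kappa\,D^l\bigl(\bA_{1-\epsilon}[K^{(j,0)}]u\bigr)+\bigl(1-(-1)^\epsilon\bigr)D^l\Phi_{K^{(j,0)},u}.
\]
Since $j+1\le n+1-l$ and $j\le n+1-l$, the inductive hypothesis bounds the first term by $c\kappa^l$ and the second by $c\kappa^{l+1}$. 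Thus everything comes down to the boundary term $D^l\Phi_{K^{(j,0)},u}$.

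Controlling that boundary term is the main obstacle, and it is handled by the Leibniz rule: $D^l\Phi_{K^{(j,0)},u}=\sum_{m=0}^{l}\binom{l}{m}g_{l-m}\,u^{(m)}$ with $g_r:=\frac{d^r}{ds^r}\bigl[K^{(j,0)}(s,s)\bigr]$. By the chain rule each $g_r$ is a finite combination of diagonal traces of derivatives of $K$; these are continuous on the compact square $I^2$, hence uniformly bounded, so $\|g_{l-m}\|_\infty\le c$. Because $u\in H^n_\kappa(I)$ and $m\le l\le n$, we have $\|u^{(m)}\|_2\le c\kappa^m\le c\kappa^l$, and therefore $\|D^l\Phi_{K^{(j,0)},u}\|_2\le c\kappa^l$. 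Adding the three contributions yields $\|D^{l+1}(\bA_\epsilon[K^{(j,0)}]u)\|_2\le c\kappa^{l+1}$ for all $\kappa\ge\kappa_0$, which closes the induction. Two remarks keep the accounting transparent: every boundary contribution carries at least one power of $\kappa$ fewer than the dominant term, so it never breaks the $\kappa^l$ budget; and the dominant contribution at each stage is exactly $i^l\kappa^l\,\bA_{\epsilon'}[K^{(j,0)}]u$ (obtained by repeatedly choosing the $i\kappa\,\bA_{1-\epsilon}$ branch), which shows the exponent $l$ in \eqref{R18E20} cannot be improved. The generic constant $c$ absorbs the finitely many uniform norms $\|K^{(a,b)}\|_\infty$ that occur and the $H^n_\kappa$-constant of $u$.
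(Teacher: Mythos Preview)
Your proof follows exactly the paper's inductive strategy: your differentiation identity is the paper's formula (\ref{R11E16}) (with $1-\epsilon=\widetilde{\epsilon+1}$ and $1-(-1)^\epsilon=2\epsilon$ for $\epsilon\in\{0,1\}$), and the induction closes in the same way via (\ref{R11E18}). You are in fact more explicit than the paper about the boundary term $D^l[K^{(j,0)}(s,s)u(s)]$, spelling out the Leibniz expansion where the paper simply asserts the estimate follows from the induction hypothesis.
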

\begin{proof}
 We prove this lemma by induction on $l$. The case $l=0$ follows directly from (\ref{R11E15}). We assume that (\ref{R18E20}) holds for $l<n+1$ and consider the case $l+1$. By applying $D$ to $\bA_\epsilon[K]u$, for $\epsilon=0,1$, we have that
\begin{equation}\label{R11E16}
D(\bA_\epsilon[K]u)(s)=\left(\bA_\epsilon\left[ K^{(1,0)}\right]u\right)(z)+i\kappa( \bA_{\widetilde{\epsilon+1}}[K]u)(s)+ 2\epsilon K(s,s)u(s),
\end{equation}
where $\widetilde{\epsilon+1}:=\left[(\epsilon+1)\mod 2\right]$. Then by applying $D^l$ to (\ref{R11E16}) with $K$ replaced by $K^{(j,0)}$ with $j\in\bZ_{n+1-l}$, we find that
\begin{equation}\label{R11E18}
\begin{split}
D^{l+1}\left(\bA_\epsilon \left[K^{(j,0)} \right]u \right) (s)=  D^l\left( \bA_\epsilon\left[K^{(j+1,0)} \right] u\right) (s) +i\kappa D^l\left( \bA_{\widetilde{\epsilon+1}}\left[K^{(j,0)} \right] u\right) (s)
+2 \epsilon D^l\left[ K^{(j,0)}(s,s) u(s)\right].\end{split}
\end{equation}
Using (\ref{R11E18}) with the induction hypothesis, we observe that estimate (\ref{R18E20}) holds for $l+1$. Thus,
by the induction principle, estimate (\ref{R18E20}) holds in general.
\end{proof}

Now we are ready to present the bound of the derivatives of the solution by the wavenumber $\kappa$.

\begin{theorem}\label{R11P2}
For a positive integer $m$, suppose that $K\in C^{[m,0]}(I^2)$ is independent of $\kappa$ and  $f\in H^m_\kappa(I)$. If $y$ is the solution of equation (\ref{R9E16}), then $y\in H^m_\kappa(I)$.
\end{theorem}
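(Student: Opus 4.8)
The plan is to prove, by induction on the differentiation order $p\in\{0,1,\dots,m\}$, the sharper statement that the solution $y$ lies in $H^p(I)$ for every $\kappa$ and satisfies $\|y\|_{H^q}\le c\,\kappa^q$ for all $q\le p$ and all $\kappa\ge\kappa_0$; the case $p=m$ is then precisely the assertion $y\in H^m_\kappa(I)$. Throughout I would work with the identity $y=f+\bK y$, which follows from $(\bI-\bK)y=f$ together with $\bK=\bA_0[K]$, and with the a priori fact (a routine fixed-$\kappa$ bootstrap: $y\in C(I)$ forces $\bK y\in C^1(I)$ by (\ref{R11E16}), and iterating $m$ times using $K\in C^{[m,0]}(I^2)$ and $f\in H^m(I)$) that $y\in H^m(I)$ for each fixed $\kappa$, which disposes of the qualitative half of the definition of $H^n_\kappa(I)$.

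For the base case $p=0$ one needs the uniform bound $\|y\|_2\le c$. Because $K$ is continuous and independent of $\kappa$ and $|e^{i\kappa|s-t|}|=1$, the operator $\bK$ maps $L^2(I)$ into $C(I)$ with $\|\bK u\|_\infty\le\bigl(\sup_s\|K(s,\cdot)\|_2\bigr)\|u\|_2\le c\|u\|_2$ for a constant $c$ independent of $\kappa$. Since $f\in H^m_\kappa(I)\subset C(I)$ and $\bK$ commutes with $(\bI-\bK)^{-1}$, we have $\bK y=(\bI-\bK)^{-1}\bK f$, so Ursell's uniform bound $\|(\bI-\bK)^{-1}\|_{C(I)\to C(I)}\le c$ gives $\|\bK y\|_\infty\le c\|\bK f\|_\infty\le c\|f\|_2$. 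Hence $\|y\|_2\le\|f\|_2+\sqrt2\,\|\bK y\|_\infty\le c\|f\|_2\le c$, the last inequality using $f\in H^m_\kappa(I)$.

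For the inductive step, assume $y\in H^{p-1}_\kappa(I)$, i.e.\ $\|y\|_{H^q}\le c\,\kappa^q$ for all $q\le p-1$. Since $p\le m$ we have $K\in C^{[p,0]}(I^2)$, so Lemma \ref{R11L7} applies with $n=p-1$ and $u=y$; taking $\epsilon=0$, $j=0\in\bZ_1$, $l=p\in\bZ_{p+1}$ it gives $\|D^p(\bK y)\|_2=\|D^p(\bA_0[K]y)\|_2\le c\,\kappa^p$, and likewise $\|D^l(\bK y)\|_2\le c\,\kappa^l$ for all $l\le p$. In particular $D^p(\bK y)\in L^2(I)$, so differentiating the identity $y=f+\bK y$ exactly $p$ times yields $y^{(p)}=f^{(p)}+D^p(\bK y)\in L^2(I)$, hence $y\in H^p(I)$, and for $l\le p$
\[
\|y^{(l)}\|_2\le\|f^{(l)}\|_2+\|D^l(\bK y)\|_2\le c\,\kappa^l ,
\]
using $\|f^{(l)}\|_2\le\|f\|_{H^l}\le c\,\kappa^l$ (valid since $f\in H^m_\kappa(I)$ and $l\le m$). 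As $\kappa>1$, this gives $\|y\|_{H^q}^2=\sum_{l\le q}\|y^{(l)}\|_2^2\le c\,\kappa^{2q}$ for all $q\le p$, which is the claim at level $p$; the induction then runs up to $p=m$.

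The main obstacle is the circularity inherent in Lemma \ref{R11L7}: its hypothesis already requires the function it acts on to belong to a $\kappa$-scaled Sobolev space, so the lemma cannot be invoked on the unknown $y$ directly. The induction on the derivative order is exactly the device that breaks this circularity — each level certifies $y\in H^{p-1}_\kappa(I)$, which is precisely the hypothesis needed to advance the lemma by one more derivative, and the corresponding regularity ($y^{(p)}\in L^2$) comes for free from the lemma's conclusion. A secondary delicate point is the base estimate $\|y\|_2\le c$: the only quantitative input available is Ursell's $L^\infty$ bound, and $\|f\|_\infty$ is \emph{not} controlled by $\|f\|_{H^m}\le c\,\kappa^m$ uniformly in $\kappa$, so the estimate must be routed through the smoothing property $\bK:L^2(I)\to C(I)$ rather than through a direct bound on $\|y\|_\infty$.
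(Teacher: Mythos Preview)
Your proof is correct and follows the same inductive architecture as the paper: induction on the differentiation order $p$, with Lemma \ref{R11L7} supplying the passage from $p-1$ to $p$ via $y^{(p)}=f^{(p)}+D^p(\bK y)$. The only substantive difference is the base case $p=0$. The paper does not use Ursell's $C(I)\to C(I)$ bound at all here; instead it invokes Ursell's integral representation (\ref{R9E17}) for $y$ in terms of the bounded kernels $\Gamma_\pm$ and the remainder kernel $R$, takes $L^2$ norms directly, and uses $\|R\|_2\to0$ to obtain $\|y\|_2\le c\|f\|_2$ for large $\kappa$. Your route---apply Ursell's $L^\infty$ resolvent bound not to $y$ but to $\bK y=(\bI-\bK)^{-1}\bK f$, exploiting the $\kappa$-independent smoothing $\bK:L^2(I)\to C(I)$---achieves the same end without unpacking the representation (\ref{R9E17}), and neatly sidesteps the trap (which you correctly flag) that $\|f\|_\infty$ is only $O(\kappa)$ from the hypothesis $f\in H^m_\kappa(I)$. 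Both arguments are short; the paper's is slightly more self-contained in $L^2$, while yours is more portable to other norms since it uses only the operator-norm output of Ursell's work rather than the explicit resolvent formula.
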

\begin{proof} It suffices to prove that there exist positive constants $c$  and $\kappa_0$ such that for all $p\in\bZ_{m+1}$ and $\kappa\geq\kappa_0$,
\begin{equation}\label{R8E33}
\|y\|_{H^p}\leq c\kappa^p.
\end{equation}
We prove this by induction on $p$. When $p=0$, we recall an expression in \cite{URSELL1969} for the solution $y$
\begin{equation}\label{R9E17}
y(s)=f(s)+\int_Ie^{i\kappa|s-t|}\{\Gamma_+(s,t)+\Gamma_-(s,t)\}f(t)dt+\int_I R(s,t;\kappa)y(t)dt,
\end{equation}
where $\Gamma_\pm$ independent of $\kappa$ are bounded functions and $R(s,t;\kappa)$ tends to zero uniformly as $\kappa$ tends to the infinity. By applying the $L^2$ norm to (\ref{R9E17}), we have that
$$
(1-\|R\|_2)\|y\|_2\leq (1+\|\Gamma_+ +\Gamma_-\|_2)\|f\|_2.
$$
With the properties of $\Gamma_\pm$ and $R$,  there exist two positive constants $\kappa_0\geq 1$ and $c$ such that for all $\kappa\geq \kappa_0$
\begin{equation}\label{R11E17}
\|y\|_2\leq c \|f\|_2.
\end{equation}
The case $p=0 $ follows directly from (\ref{R11E17}).

We assume that (\ref{R8E33}) holds for $p<m$  and we consider the case $p+1$. It suffices to show that there exist positive constants $c$  and $\kappa_0$ such that for all $\kappa\geq\kappa_0$,  $\|y^{(p+1)}\|_2\leq c\kappa^{p+1}$.
Since $y^{(p+1)}=(\bK y)^{(p+1)}+f^{(p+1)}$ and $f\in H^m_\kappa(I)$, it remains to prove that the $L^2$ norm of $(\bK y)^{(p+1)}$ is bounded by $c\kappa^{p+1}$. Note that $K\in C^{[p+1,0]}(I^2)$. Setting $l=p+1$ and $j=0$ in (\ref{R18E20}), by Lemma \ref{R11L7}, we have that $$
\|D^{p+1}Ky\|_2=\|D^{p+1}\left(\bA_0[K]y\right)\|_2\leq c\kappa^{p+1}.
$$
Hence, (\ref{R8E33}) holds for the case $p+1$.
\end{proof}

From the inequality (\ref{R11E17}), it is clear that the inverse of $\bI-\bK$ is bounded by a constant independent of $\kappa$. We state this result as a corollary below.

\begin{corollary}\label{R11C2}
If $K\in C(I^2)$ is independent of $\kappa$ and 1 is not an eigenvalue of $\bK$ for any $\kappa$, then the inverse of $\bI-\bK$ exists for any $\kappa$ and  there exist positive constants $c$ and $\kappa_0$ such that
\begin{equation}\label{R18E18}
\sup_{\kappa\geq\kappa_0}\|(\bI-\bK)^{-1}\| \leq c.
\end{equation}
\end{corollary}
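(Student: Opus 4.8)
The plan is to prove the two assertions separately: the invertibility of $\bI-\bK$ for every $\kappa$, which is a soft consequence of compactness and the Fredholm alternative, and then the uniform operator-norm bound for large $\kappa$, which is essentially the content of the estimate (\ref{R11E17}) once one observes that the constant appearing there does not depend on the data $f$.

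For invertibility, I would first note that since $K\in C(I^2)$ is bounded and $|e^{i\kappa|s-t|}|=1$, the kernel $K(s,t)e^{i\kappa|s-t|}$ belongs to $L^2(I^2)$, so $\bK$ is Hilbert--Schmidt, hence compact, on $L^2(I)$; moreover $\bK$ maps $L^2(I)$ continuously into $C(I)$ (the Cauchy--Schwarz inequality gives the pointwise bound, and the $L^2(I)$-continuity of the map $s\mapsto K(s,\cdot)e^{i\kappa|s-\cdot|}$ gives continuity of $\bK u$), so any eigenfunction of $\bK$ associated with a nonzero eigenvalue automatically lies in $C(I)$. Consequently the hypothesis that $1$ is not an eigenvalue of $\bK$ holds on $L^2(I)$ as well, and the Riesz--Schauder theory then yields that $\bI-\bK$ is a bijection of $L^2(I)$ with bounded inverse, for each fixed $\kappa$.

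For the uniform bound, fix $\kappa\ge\kappa_0$ and an arbitrary $f\in L^2(I)$, and set $y:=(\bI-\bK)^{-1}f$, the solution of (\ref{R9E16}). Applying the $L^2$ norm to the Ursell representation (\ref{R9E17}) and using the triangle inequality gives $(1-\|R\|_2)\|y\|_2\le (1+\|\Gamma_++\Gamma_-\|_2)\|f\|_2$, exactly as in the case $p=0$ of the proof of Theorem \ref{R11P2}. Since $\Gamma_\pm$ are $\kappa$-independent bounded functions and $\|R\|_2\to 0$ as $\kappa\to\infty$, enlarging $\kappa_0$ if necessary so that $\|R\|_2\le \tfrac12$ for all $\kappa\ge\kappa_0$ yields $\|y\|_2\le c\|f\|_2$ with $c:=2(1+\|\Gamma_++\Gamma_-\|_2)$, a constant depending only on $K$. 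Since $f$ was arbitrary, $\|(\bI-\bK)^{-1}\|\le c$ for every $\kappa\ge\kappa_0$, which is (\ref{R18E18}).

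The only real point requiring care — and the main obstacle — is that (\ref{R11E17}) must be invoked with a constant that is uniform in $f$, which it is (the constant in Ursell's representation depends on $K$ and $\kappa$ only, not on the forcing term), and that (\ref{R9E17}), classically derived for continuous data, should be justified for $f\in L^2(I)$; this can be handled by a density argument, approximating $f$ in $L^2(I)$ by continuous functions and passing to the limit using the $L^2$-boundedness of $(\bI-\bK)^{-1}$ and of the integral operators occurring in (\ref{R9E17}). If one is content with the bound on $C(I)$, the Ursell estimate $\|y\|_\infty\le c\|f\|_\infty$ can be used directly in place of (\ref{R11E17}), but the $L^2$ statement above is the one needed for the Galerkin analysis carried out in the later sections.
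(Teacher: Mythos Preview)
Your proposal is correct and follows essentially the same route as the paper: the paper simply remarks that the corollary is immediate from inequality (\ref{R11E17}), which was obtained in the $p=0$ step of the proof of Theorem \ref{R11P2} by taking $L^2$ norms in the Ursell representation (\ref{R9E17}). You have reproduced that argument and added the Fredholm-alternative justification for invertibility and the density remark for general $f\in L^2(I)$, both of which are sound refinements of details the paper leaves implicit.
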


When the solution of equation (\ref{R9E16}) is highly oscillatory, conventional numerical methods may fail to solve the equation. We now elaborate this point by taking the Galerkin methods as an example. Let $S_h^m$ denote the space of spines of order $m$ with $h$ being the maximal distance between two successive knots. This space will be defined precisely in Section \ref{sec;6}. Let $y_h\in S_h^m$ be the approximate solution of equation (\ref{R9E16}) obtained by the Galerkin methods. According to \cite{SLOAN1991}, if the solution $y\in H^m(I)$, then there exists a constant $h_0>0$ such that for all $0<h<h_0$,
$$
\|y_h-y\|_2\leq ch^m\|y\|_{H^m}.
$$
By Theorem \ref{R11P2}, the $H^m$ norm of the solution of equation (\ref{R9E16}) may increase in order $\bO(\kappa^m)$. As a result, there exists a constant $c$ independent of $\kappa$ such that
\begin{equation*}
\|y_h-y\|_2\leq c(\kappa h)^m.
\end{equation*}
To ensure convergence of the approximate solution $y_h$, we must choose the step size $h$ so that $h\kappa<1$. When $\kappa$ is large, $h$ will be small. Thus, the resulting linear system will have a large dimension and it is computationally costly to solve such a system. This explains why conventional numerical methods may fail to the equation when $\kappa$ is large. This motivates us to develop efficient non-conventional numerical methods for solving the equation. The difficulty of a conventional numerical method comes from the rapid oscillation of the solution. To overcome the difficulty, we are required to understand the oscillatory property of the solution. For this purpose, we shall introduce in the next section functional spaces suitable for oscillatory functions.

\section{Spaces of Oscillatory Functions}
\label{sec;3}

The main purpose of this section is to introduce a notion which describes the oscillation of an oscillatory function. Specifically, we define two kinds of oscillatory spaces, $\kappa$-oscillatory spaces of order $n$ and $\kappa$-oscillatory structured spaces of order $n$ with oscillatory structures.

To our best knowledge, there is no appropriate space in the present literature to describe oscillatory functions in the context of this paper. Though the space of the functions of bounded mean oscillation (BMO) is relates to oscillation of functions, it can not be used to describe the oscillatory properties of oscillatory functions. In fact, it is mainly used to study singular operators. Our first task is to introduce appropriate spaces to study oscillatory functions. Then what is an oscillatory function? To have a better view on the definition of oscillation, we first review the traditional understanding of oscillation. The oscillation was first used in describing the solution behaviour of second order linear differential equations \cite{LEIGHTON1950} and then was extended to describe the solution behaviour of Volterra integral equations. A commonly used definition for a solution being oscillatory is given as follows.
\begin{definition}
A solution $x(t)$ is said to be oscillatory if $x(t)$ has zeros for arbitrary large $t$ (i.e. it has infinitely many zeros for $t\geq0$ ); otherwise, a solution $x(t)$ is said to be non-oscillatory \cite{HARTMAN1952,BHATIA1966,ONOSE1975,ONOSE1990}.
\end{definition}
Though the oscillation defined above is for the solution of some differential and integral equations, it is can be extended to the definition of some more general oscillatory functions.
 \begin{definition}\label{oscillation-definition}
   A function $y(t)$ is said to be oscillatory if there exists a known non-oscillatory function $x(t)$ such that $y(t)-x(t)$ has zeros for arbitrary large $t$ (i.e. it has infinitely many zeros for $t\geq0$ ); otherwise, a solution $y(t)$ is said to be non-oscillatory.
\end{definition}
We note that Definition \ref{oscillation-definition} need our intuitive observation of oscillating phenomena to determine a non-oscillatory function. According to the extended definition of oscillation, it is defined on an infinite domain. Thus it can not give any information of oscillation such as the degree of oscillatory extent on finite domain of interest. The influence is that it may be helpless in numerical analysis in the finite domain. A simple example is that $\sin x$ and $\sin 10^6 x$ on $I:=[0,1]$ which are both oscillatory according to Definition \ref{oscillation-definition}.
The first function can be approximated numerically using much less computation than the second one by standard methods to obtain the same accuracy. Definition \ref{oscillation-definition} offers little knowledge on this difference.
The main problem here is related to another question which is ``how oscillatory is an oscillatory function?''.

More recently, high oscillatory problems have been extensively studied by a group of researchers at the Isaac Newton Institute of Mathematical Sciences. However, there is no a clearly stated mathematical definition on oscillation but it reveals that a function is highly oscillatory if it has rapidly oscillating phenomena \cite{ISERLES2009book}. It is a conclusion of oscillating phenomena and we can tell which function is more oscillating from the oscillating speed, called frequency in time domain or wavenumber in space domain. However, the understanding of this oscillation with frequency may still be helpless for numerical analysis purpose. It is because in the functional approximation context, what really matter is not just how quickly a function oscillates but the effect of oscillation of the function on the accuracy of its approximation. The wavenumber alone is not sufficient to describe the {\it effect} of oscillation of an oscillatory function that has on the approximation accuracy of the function.
%
%
We now illustrate this point by an example. We consider the functions
$$
g_j(t)=t^2+\frac{\sin(\kappa t)} {\kappa^{j-1}},\quad t\in I,\; j\in\bZ_4^+,
$$
where $\bZ_n^+:=\bZ_n\setminus \{0\}$, for $2\leq n\in\bN$. Clearly, functions $g_j$, $j\in\bZ_4^+,$  have the same wavenumber $\kappa$ and they oscillate rapidly in the same degree when $\kappa$ is large. However, the effect of
oscillation of these functions on the accuracy of their approximation are not the same. To see this point, we approximate them by using piecewise linear polynomial interpolations on the uniform partition of $I$ with 1281 points. We list in Table \ref{R11Ta5} the maximum errors of the approximation of these functions, where the norm  $\|\cdot\|_\infty$ is computed by sampling uniformly over $I$ with 2049 points.

\begin{table}[htb]
\begin{center}
\small
\caption{The maximum errors of the linear interpolations of $g_j$ on a uniform partition with 1281 points over $I$. }
\label{R11Ta5}
\begin{tabular}{cccc}
\hline
$\kappa$ & $\|g_1-\bar{g}_1\|_\infty$  & $\|g_2-\bar{g}_2\|_\infty$  & $\|g_3-\bar{g}_3\|_\infty$  \\
\hline
40&$4.89e-4$&$1.28e-5$&$9.16e-7$\\
80&$1.95e-3$&$2.50e-5$&$9.15e-7$\\
160&$7.80e-3$&$4.94e-5$&$9.15e-7$\\
320&$3.11e-2$&$9.78e-5$&$9.14e-7$\\
640&$1.22e-1$&$1.92e-4$&$9.09e-7$\\
\hline
\end{tabular}
\end{center}
\end{table}

The numerical results indicate that when the wavenumber $\kappa$ doubles, the approximation errors of $g_1$ increase about four times, those of $g_2$ are about doubled and those of $g_3$ show nearly no change.
This phenomenon is easily understood when we consider the approximation errors of these functions.
For each $j\in\bZ_4^+$, we let $\bar{g}_j$ denote the piecewise interpolation approximation of $g_j$. Then, it can be estimated that
\begin{equation}\label{concept_example}
\|g_j-\bar{g}_j\|_\infty\leq ch^2\|g_j^{(2)}\|_\infty\leq ch^2(2+\kappa^{3-j}), j\in\bZ_4^+,
\end{equation}
where $c$ is a constant independent of $\kappa$ and $h:=1/1280$. It means that for each $j\in\bZ_4^+$, the error of the approximation for $g_j$ increases at the speed of $\bO(\kappa^{3-j})$ as $\kappa\rightarrow \infty$. Estimate \eqref{concept_example} indicates that in the context of functional approximation, what really matters for the approximation accuracy is how the derivative (of the function to be approximated) which bounds the approximation error depends on the wavenumber.
Specifically, for $u\in C^m(I)$, we let
$$
\|u\|_{C^m}:=\max_{j\in\bZ_{m+1}}\|u^{(j)}\|_\infty
$$
and observe that in general the dependence of $\|u\|_{C^m}$ on the wavenumber $\kappa$ is crucial for the approximation of $u$ by piecewise polynomials of order $m$.

We now conclude that the traditional understanding of oscillation is a direct description of oscillating phenomena and there is no a suitable notion of oscillation which is useful in numerical analysis.  This motivates us to propose such a notion.
The observation made in the above example inspires us to introduce the new notion to describe the degree of oscillation of oscillatory functions based on the dependence of the norm in a certain function space on the wavenumber.

\begin{definition}\label{oscillatory-of-order-n}
Let $n$ be a positive number and $(X,\|\cdot\|_X)$ be a normed space.  A function $u$ is called $\kappa$-oscillatory of order $n$ in $X$ if it satisfies

(1) $u$ is $\kappa$-oscillatory in $X$, that is $u$ is $\kappa$-parameterized and $u\in X$ for any $\kappa$,

(2) there exist positive constants $c$ and $\kappa_0$ such that for all $\kappa\geq\kappa_0$
\begin{equation*}
\kappa^{-n}\|u\|_X\leq c.
\end{equation*}

When $n=0$, we say that $u$ is non-$\kappa$-oscillatory in $X$.
\end{definition}

In the functional approximation context, the space $X$ in Definition \ref{oscillatory-of-order-n} is normally a Sobolev space which appears in the error bound of an approximation of an oscillatory function. Its concrete form depends upon the regularity of $u$, the specific approximation space and the approximation principle are chosen. For example, if the function $u$ is $m$-times differentiable, the approximation space is chosen to be the splines of order $m$ and the approximation principle is the orthogonal projection (resp. interpolation), then the appropriate Sobolev space is $H^m$ (resp. $W^{\infty, m}$).

A simple example of $\kappa$-oscillatory function of order $2$ in $H^2(I)$ is $t^{5/2}e^{i\kappa t}$.
Based on the definition of $\kappa$-oscillatory, it is clear that the "oscillation" appeared in the function is only related to the parameter $\kappa$. We can also understand the $\kappa$-oscillatory function in some sense by the traditional oscillatory function with the wavenumber $\kappa$. Hence, the parameter $\kappa$ that appears may characterize how rapidly an oscillatory function oscillates.
 In Definition \ref{oscillatory-of-order-n}, the order $n$ reflects the degree of maximum influence that the oscillation of a function in space $X$ has on the accuracy of its approximation. The parameter $n$ can also be regarded as an index of the amplitude of the oscillatory function in the form of the norm of $X$. Though the concept is proposed to describe the oscillatory functions, some traditional oscillatory functions which are independent of $\kappa$ are non-$\kappa$-oscillatory and some $\kappa$-oscillatory functions can be non-oscillatory in traditional sense according to Definition \ref{oscillatory-of-order-n}. For example, a non-oscillatory function in traditional sense multiplied by a factor $\kappa^n$ may be a $\kappa$-oscillatory function of order $n$ in $C^n$ but not oscillatory according to the traditional definition.

The concept of $\kappa$-oscillatory functions of order $n$ is a natural development of the classification of oscillatory functions in some sence. In the past, we only distinguish the oscillatory functions by different wavenumbers or frequencies and now we enrich the classification by distinguishing the difference among traditional oscillatory functions of the same wavenumber from the point of their effect on the accuracy of the approximation.

%

Next, we shall define a $\kappa$-oscillatory space of order $n$ which gathers all the functions with the same maximum influence of the oscillation.
\begin{definition}\label{space-definition1}
Let $n$ be a positive number and $(X,\|\cdot\|_X)$ be a normed space. The set $X_{\kappa,n}:=\{u: u$ is $\kappa$-oscillatory of order $n$ in $X\}$ is called the $\kappa$-oscillatory space of order $n$.
The space $X_{\kappa, 0}$ is called a non-$\kappa$-oscillatory space.
\end{definition}

With Definition \ref{space-definition1} and the definition of space $H^m_\kappa(I)$ in Section \ref{sec;2}, we have that $H^m_\kappa(I)=\cap_{n=0}^m H^n_{\kappa,n}(I)$. Thus if the hypothesis of Theorem \ref{R11P2} are satisfied then the solution $y\in X_{\kappa, n}$ where $X:=H^n$, for $n\in\bZ_{m+1}$.

%

In the following, we shall introduce a kind of important space, $\kappa$--oscillatory structured space of order $n$ with an oscillatory structure.

\begin{definition}\label{space-definition2}
Let $n$ be a positive number, $(X,\|\cdot\|_X)$ be a normed space and $M\in \bN$. Suppose that $\varepsilon_M:=\left\{e_j: j\in\bZ_{M+1}^+\right\}$ is a set of typical known $\kappa$-oscillatory functions but also oscillatory functions in the traditional sense. A $\kappa$-oscillatory structured space of order $n$ with $\varepsilon_M$ is defined by
\begin{equation}\label{R18E38}
\tilde{X}_{\kappa,n}:=\left\{ u_0+\sum_{j=1}^M u_j e_j: u_j\in X_{\kappa,n}, j\in\bZ_{M+1}\right\},
\end{equation}
Space $\tilde{X}_{\kappa, 0}$ is called non-$\kappa$-oscillatory structured space with the oscillatory structure $\varepsilon_M$.
\end{definition}

It can be verified that for each $n$, space $\tilde{X}_{\kappa, n}$
is a vector space. Every
element in the space can be represented by the oscillatory
structure. For this reason, we call the space the
$\kappa$-oscillatory structured space. The choice of the oscillatory
structure $\varepsilon_M$ depends on the problem under consideration
and they should describe the main oscillation of interest. For
example, when solving equation (\ref{R9E15}), one may choose
$\varepsilon_M$ as $\{e^{i\kappa t}, e^{-i\kappa t}: t\in I\}$.
%
%

The non-$\kappa$-oscillatory structured space with a specific
structure is useful in approximating an oscillatory function. In
fact, if an oscillatory function is in $\tilde{X}_{\kappa,0}$, then
we are able to design numerical methods to approximate the
oscillatory function with the optimal convergence order as we do for
a non-oscillatory function. This is because we can construct a
corresponding structured approximation space with the same structure
to approximate functions in $\tilde{X}_{\kappa,n}$ to avoid the
effect from the high oscillation of the oscillatory structure. The
non-$\kappa$-oscillatory structured space with a specific structure
will serve as a main tool in describing the oscillatory properties
of the solution of equation (\ref{R9E15}).

%

\section{Oscillatory Function Spaces Associated with Sobolev Spaces}
\label{sec;4}

We present in this section special results of oscillatory function
spaces which defined in Section \ref{sec;3} associated with Sobolev
spaces. These spaces will be used to characterize in the next
section the solution of the oscillatory Fredholm integral equations
of the second kind.

Through out this paper, let $m$ be a fixed positive integer.
According to Definitions \ref{space-definition1} and
\ref{space-definition2}, associated with the Sobolev space $H^m(I)$,
for each $n\in\bZ_{m+1}$, we construct the $\kappa$-oscillatory
space $H_{\kappa,n}^m(I)$ of order $n$ and the corresponding
$\kappa$-oscillatory structured space $\tilde{H}^m_{\kappa,n}(I)$ of
order $n$ with the structure $\{e^{i \kappa t}, e^{-i \kappa t}:
t\in I\}$. In particular, $\tilde{H}^m_{\kappa,0}(I)$ is the
corresponding non-$\kappa$-oscillatory structured space. It will be
proved that $\tilde{H}^m_{\kappa,0}(I)$ is closed under a set of
Fredholm integral operators.

We begin with an investigation of an oscillatory Volterra integral
function related to operator $\bK$. We define the oscillatory
Volterra integral function below. For $a\in I$ and $L\in
C^{[n]}(I^2)$, we define the oscillatory Volterra integral function
by
$$
\ell(s):=\int_a^sL(s,t)e^{i\kappa t}dt, \ \ s\in I.
$$
We recall a well-known result  that if $u\in C^{n}[a,b], n\in \bN$,
then
\begin{equation}\label{R8E37}
    \int_a^bu(t)e^{i\kappa t}dt=e^{i\kappa b}\sigma_n[u](b)- e^{i\kappa a}\sigma_n[u](a)+\frac{(-1)^n}{(i\kappa )^{n}} \int_a^bu^{(n)}(t)e^{i\kappa t}dt,
\end{equation}
where
\begin{equation}\label{R8E1}
\sigma_n[u](t):=\sum_{j=0}^{n-1}\frac{(-1)^{j}}{(i\kappa )^{j+1}}u^{(j)}(t),\ \ t\in [a,b].
\end{equation}
The equation (\ref{R8E37}) can be easily obtained by employing
integral by parts. A more general form of (\ref{R8E37}) can be found
in \cite{NORSETT2005}. Motivated by (\ref{R8E37}), we define
\begin{equation}\label{R10E2}
  r(s) := -e^{i\kappa a} \sigma_{n} [L(s,\bcdot)](a),\ \ s\in I,
\end{equation}and
\begin{equation}\label{R10E1}
  g(s) :=\left[\ell(s)-r(s)\right]e^{-i\kappa s},\ \ s\in I.
\end{equation}
Thus, the oscillatory Volterra integral function $\ell$ has the
decomposition
\begin{equation}\label{R8E32}
\ell(s)=g(s)e^{i\kappa s}+r(s), \ \ s\in I.
\end{equation}
The expression of $r$ is clear and the property of its derivatives
can be easily obtained from those of the kernel $L$. However, the
property of the derivatives of $g$ requires further investigation.
We next present an explicit expression for the derivatives of $g$.
To this end, we first show an auxiliary equality. For
$j\in\bZ_{n+1}$ and $n\in \bN_0$, we let
$C_n^j:=\frac{n!}{j!(n-j)!}$. We also need the formula
\begin{equation}\label{R18E50}
[L(s,s)]^{(p)}=\sum_{b=0}^pC_p^b L^{(b,p-b)}(s,s),\ \ s\in I,
\end{equation}
where by $[L(s,s)]^{(p)}$ we mean $\frac{d^p}{ds^p}L(s,s)$.

\begin{lemma}\label{R11L8} If for $l\in \bN$, $L\in C^{[l-1]}(I^2)$, then for all $s\in I$
\begin{equation}\label{R18E39}
\sum_{p=0}^l\sum_{q=0}^{p-1}C_l^p(-i\kappa)^{l-p}e^{-i\kappa
s}\left[L^{(q,0)}(s,s)e^{i\kappa s}\right]^{(p-1-q)}=
\sum_{p=0}^{l-1}
\sum_{q=0}^{l-1-p}C_l^p(-i\kappa)^{l-1-p-q}L^{(q,p)}(s,s).
\end{equation}
\end{lemma}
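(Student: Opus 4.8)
The plan is to recognize the left-hand side of (\ref{R18E39}) as the non-integral part of an $l$-th derivative of a Volterra-type function and then to expand that closed form by the diagonal Leibniz rule (\ref{R18E50}). Put $D:=\frac{d}{ds}$, fix $a\in I$, recall the Volterra function $\ell(s)=\int_{a}^{s}L(s,t)e^{i\kappa t}\,dt$ introduced above, and introduce also its demodulated companion $F(s):=e^{-i\kappa s}\ell(s)=\int_{a}^{s}L(s,t)e^{i\kappa(t-s)}\,dt$. For a bivariate function $M$ let $\mathcal{D}M:=M^{(1,0)}-i\kappa M$ and write $\mathcal{D}^{j}M$ for the $j$-fold composition, so that $\mathcal{D}^{j}M=\sum_{i=0}^{j}C_{j}^{i}(-i\kappa)^{j-i}M^{(i,0)}$.

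First I would establish, by a routine induction on $p$ (differentiating under the integral sign, the boundary contribution at $t=s$ feeding the diagonal sum at each step), the Leibniz-type formula
\[
\ell^{(p)}(s)=\sum_{q=0}^{p-1}\Big[L^{(q,0)}(s,s)e^{i\kappa s}\Big]^{(p-1-q)}+\int_{a}^{s}L^{(p,0)}(s,t)e^{i\kappa t}\,dt,\qquad p\in\bN_{0}.
\]
Forming the combination $\sum_{p=0}^{l}C_{l}^{p}(-i\kappa)^{l-p}\ell^{(p)}(s)$, which by the binomial theorem equals $(D-i\kappa)^{l}\ell(s)$, multiplying through by $e^{-i\kappa s}$, and using the conjugation identity $e^{-i\kappa s}(D-i\kappa)=De^{-i\kappa s}$ together with $\sum_{i}C_{l}^{i}(-i\kappa)^{l-i}L^{(i,0)}=\mathcal{D}^{l}L$, one obtains that $D^{l}F(s)$ equals the left-hand side of (\ref{R18E39}) plus $\int_{a}^{s}(\mathcal{D}^{l}L)(s,t)e^{i\kappa(t-s)}\,dt$. (If $L$ lies only in $C^{[l-1]}(I^{2})$ the intermediate object $L^{(l,0)}$ is not literally available, but it cancels in the final identity; one may therefore argue first for smoother $L$ and pass to the limit, both sides of (\ref{R18E39}) being continuous in $L$ in the $C^{[l-1]}$-norm.)

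On the other hand, from the one-step rule $\frac{d}{ds}\int_{a}^{s}M(s,t)e^{i\kappa(t-s)}\,dt=M(s,s)+\int_{a}^{s}(\mathcal{D}M)(s,t)e^{i\kappa(t-s)}\,dt$, an induction on $l$ yields
\[
D^{l}F(s)=\sum_{j=0}^{l-1}\Big[(\mathcal{D}^{j}L)(s,s)\Big]^{(l-1-j)}+\int_{a}^{s}(\mathcal{D}^{l}L)(s,t)e^{i\kappa(t-s)}\,dt.
\]
Comparing the two expressions for $D^{l}F(s)$, the remainder integrals cancel and the left-hand side of (\ref{R18E39}) becomes $\sum_{j=0}^{l-1}\big[(\mathcal{D}^{j}L)(s,s)\big]^{(l-1-j)}$. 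It then remains to substitute $\mathcal{D}^{j}L=\sum_{i=0}^{j}C_{j}^{i}(-i\kappa)^{j-i}L^{(i,0)}$, to expand each factor by (\ref{R18E50}) as $\big[L^{(i,0)}(s,s)\big]^{(l-1-j)}=\sum_{b=0}^{l-1-j}C_{l-1-j}^{b}L^{(i+b,\,l-1-j-b)}(s,s)$, and to regroup the resulting triple sum by the pair of differentiation orders $(\alpha,\beta)=(i+b,\,l-1-j-b)$ of the derivative $L^{(\alpha,\beta)}(s,s)$ appearing. One checks that the power of $\kappa$ attached to $L^{(\alpha,\beta)}(s,s)$ is always $(-i\kappa)^{l-1-\alpha-\beta}$ — so the coefficient vanishes unless $\alpha+\beta\le l-1$, which is exactly the summation range on the right of (\ref{R18E39}) — and that the remaining scalar factor, after the substitution $c:=\beta+b$ and the symmetry $C_{n}^{k}=C_{n}^{n-k}$, collapses to $\sum_{c}C_{c}^{\beta}C_{l-1-c}^{\,l-1-\alpha-\beta}$, which by the standard identity $\sum_{c=0}^{N}C_{c}^{a}C_{N-c}^{b}=C_{N+1}^{a+b+1}$ equals $C_{l}^{\alpha}$. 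After re-indexing the summation, this is precisely the right-hand side of (\ref{R18E39}).

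The step I expect to be the main obstacle is the final regrouping: organizing the triple sum by the pair $(\alpha,\beta)$, verifying that the attached power of $\kappa$ does not depend on the summation indices, and collapsing the scalar factor to a single binomial coefficient via the correct convolution identity. By comparison, the two inductions are routine; the only delicate point there is keeping track of how the boundary/diagonal terms telescope. (One could also bypass $\ell$ and $F$ entirely and expand the left-hand side of (\ref{R18E39}) directly using the Leibniz rule and (\ref{R18E50}), at the price of a second, nested binomial-convolution collapse in place of the first induction.)
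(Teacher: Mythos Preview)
Your argument is correct and takes a genuinely different route from the paper. The paper proceeds by pure algebra: it expands $e^{-i\kappa s}[L^{(q,0)}(s,s)e^{i\kappa s}]^{(p-1-q)}$ via the Leibniz rule, then performs several changes of summation order, invoking the Pascal recursion $C_n^j=C_{n-1}^{j-1}+C_{n-1}^j$ twice to collapse the intermediate binomial sums. No auxiliary function is introduced; the identity is treated as a self-contained combinatorial equality. Your approach is more structural: you recognise the left-hand side as the non-integral part of $D^{l}F$ for the demodulated Volterra function $F(s)=e^{-i\kappa s}\ell(s)$, compute $D^{l}F$ a second way via the one-step rule for $\int_a^s M e^{i\kappa(t-s)}dt$, cancel the common integral remainder, and finish with a single Vandermonde-type convolution $\sum_c C_c^{a}C_{N-c}^{b}=C_{N+1}^{a+b+1}$ in place of the paper's repeated Pascal telescoping. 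This buys conceptual clarity (the identity is literally ``two ways to differentiate the same thing'') at the cost of the density argument you flag for the $C^{[l-1]}$ hypothesis; the paper's direct manipulation needs no such workaround.

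One small point worth recording: your computation produces the coefficient $C_l^{\alpha}$ attached to $L^{(\alpha,\beta)}(s,s)$, i.e.\ the binomial indexed by the \emph{first} derivative order. A quick check at $l=2$ shows this is indeed what the left-hand side equals (one gets $2L^{(1,0)}(s,s)+L^{(0,1)}(s,s)-i\kappa L(s,s)$), and it is exactly what the paper's own proof derives in its final step. The displayed right-hand side of the lemma carries $C_l^p$ with $p$ the second index of $L^{(q,p)}$, which appears to be a typographical slip; your ``after re-indexing'' is therefore correct in substance but glosses over this discrepancy. It does not affect the downstream use of the lemma.
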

\begin{proof}
 Using the Leibniz rule for high order derivatives, we get that
\begin{equation}\label{R18E40}
\begin{split}
e^{-i\kappa s}\left[L^{(q,0)}(s,s)e^{i\kappa s}\right]^{(p-1-q)}=
\sum_{\alpha=0}^{p-1-q} C_{p-1-q}^\alpha (i\kappa )^{p-1-q-\alpha}
\left[L^{(q,0)}(s,s)\right]^{(\alpha)}
\end{split},\ \ s\in I.
\end{equation}
We denote by $\Pi(s)$ the left hand side of the formula
(\ref{R18E39}). Substituting (\ref{R18E40}) into  $\Pi$ and then
making a change of the summation order of the resulting expression,
we have that
\begin{equation}\label{R18E21}
\begin{split}
\Pi(s)= \sum_{q=0}^{l-1}\sum_{\alpha=0}^{
l-1-q}\left\{\sum_{p=q+\alpha+1}^l
C_l^pC_{p-1-q}^\alpha(-1)^{p-1-q-\alpha }\right\}(-i\kappa
)^{l-1-q-\alpha} \left[L^{(q,0)}(s,s) \right]^{(\alpha)}
\end{split},\ \ s\in I.
\end{equation}
By the recursion relation
\begin{equation}\label{R8E34}
C_n^j=C_{n-1}^{j-1}+C_{n-1}^j, j\in\bZ_{n-1}^+, \ \ n\in \bN,
\end{equation}
 we can rewrite the expression in the curly braces of (\ref{R18E21}) as $C_{l-1-\alpha}^q$. Hence, (\ref{R18E21}) becomes
\begin{equation}\label{R18E23}
\begin{split}
\Pi(s)=\sum_{q=0}^{l-1}\sum_{\alpha=0}^{ l-1-q}C_{l-1-\alpha}^q (-i\kappa )^{l-1-q-\alpha} \left[L^{(q,0)}(s,s)\right]^{(\alpha)}
\end{split},\ \ s\in I.
\end{equation}
Equation (\ref{R18E23}) can be expanded further with (\ref{R18E50}).
Then we change the summation order of $q$ and of $\alpha$ and change
$q+\alpha$ to $q$. This leads to
\begin{equation}\label{R18E24}
\begin{split}
\Pi(s)= \sum_{\alpha=0}^{l-1}\sum_{q=\alpha}^{ l-1}\sum_{p=0}^\alpha C_{l-1-\alpha}^{q-\alpha} C_\alpha^p (-i\kappa )^{l-1-q} L^{(q-p,p)}(s,s) \end{split},\ \ s\in I.
\end{equation}
Changing the summation order again in (\ref{R18E24}) gives that
\begin{equation}\label{R18E25}
\begin{split}
\Pi(s)= \sum_{q=0}^{l-1}\sum_{p=0}^{q}\left(\sum_{\alpha=p}^{q}C_{l-1-\alpha}^{q-\alpha} C_\alpha^p\right) (-i\kappa )^{l-1-q} L^{(q-p,p)}(s,s),\ \ s\in I.
\end{split}
\end{equation}
Again, by applying the recursion relation (\ref{R8E34}), we get that
$\sum_{\alpha=p}^{q}C_{l-1-\alpha}^{q-\alpha} C_\alpha^p =C_l^{q-p}$
and (\ref{R18E25}) becomes
\begin{equation}\label{R18E26}
\begin{split}
\Pi(s)=\sum_{q=0}^{l-1}\sum_{p=0}^{q} C_l^{q-p} (-i\kappa )^{l-1-q}L^{(q-p,p)}(s,s),\ \ s\in I.
\end{split}
\end{equation}
Finally, we change the summation order of $p$ and of $q$ in
(\ref{R18E26}) and then  in the resulting formula we change $q-p$ to
$q$. This yields the desired formula (\ref{R18E39}).
\end{proof}

We are now ready to present the explicit expression for derivatives
of $g$. For $n,l\in \bN_0$, let $\aU_{n,l}:=\{(q,p):q\in\bZ_n,
p\in\bZ_{l+1}, q+p\geq l\}$.

\begin{lemma}\label{R11L1}
 If $L\in C^{[n]}(I^2), n\in\bN_0$, then for $l\in\bZ_{n+1}$
\begin{equation}\label{R18E47}
 g^{(l)}(s)=-\sum_{(q,p)\in\aU_{n,l}}C_l^p(-ik)^{l-1-p-q}L^{(p,q)}(s,s)+\sum_{p=0}^l C_l^p(-ik)^{l-p-n}\int_a^sL^{(p,n)}(s,t)e^{i \kappa(t-s)}dt.
\end{equation}
\end{lemma}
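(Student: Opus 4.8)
The plan is to prove (\ref{R18E47}) by induction on $l$, starting it off with the integration-by-parts identity (\ref{R8E37}). For the base case $l=0$ I would not differentiate $g$ at all but instead derive its closed form: applying (\ref{R8E37}) with $b=s$ and integration order $n$ to the map $t\mapsto L(s,t)$ (whose $j$-th $t$-derivative is $L^{(0,j)}(s,\bcdot)$, legitimate since $L\in C^{[n]}(I^2)$) gives
\[
\ell(s)=e^{i\kappa s}\sigma_n[L(s,\bcdot)](s)-e^{i\kappa a}\sigma_n[L(s,\bcdot)](a)+\frac{(-1)^n}{(i\kappa)^n}\int_a^s L^{(0,n)}(s,t)e^{i\kappa t}dt.
\]
By (\ref{R10E2}) the middle term is exactly $r(s)$, so from (\ref{R10E1}),
\[
g(s)=[\ell(s)-r(s)]e^{-i\kappa s}=\sigma_n[L(s,\bcdot)](s)+\frac{(-1)^n}{(i\kappa)^n}\int_a^s L^{(0,n)}(s,t)e^{i\kappa(t-s)}dt.
\]
Expanding $\sigma_n$ by (\ref{R8E1}) and using the identities $(-1)^j(i\kappa)^{-(j+1)}=-(-i\kappa)^{-(j+1)}$ and $(-1)^n(i\kappa)^{-n}=(-i\kappa)^{-n}$ turns this into (\ref{R18E47}) for $l=0$, since $\aU_{n,0}=\{(q,0):q\in\bZ_n\}$.

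For the inductive step I would assume (\ref{R18E47}) for some $l$ with $0\le l<n$ and differentiate both summands once. The finite sum is handled by the product rule $\frac{d}{ds}L^{(p,q)}(s,s)=L^{(p+1,q)}(s,s)+L^{(p,q+1)}(s,s)$, and the integral by the Leibniz rule
\[
\frac{d}{ds}\int_a^s L^{(p,n)}(s,t)e^{i\kappa(t-s)}dt=L^{(p,n)}(s,s)+\int_a^s L^{(p+1,n)}(s,t)e^{i\kappa(t-s)}dt-i\kappa\int_a^s L^{(p,n)}(s,t)e^{i\kappa(t-s)}dt.
\]
After reindexing the shifted sums ($p\mapsto p+1$ in one copy, $q\mapsto q+1$ in another) I would merge coefficients via Pascal's rule $C_l^{p}+C_l^{p-1}=C_{l+1}^{p}$; the integral terms then assemble at once into $\sum_{p=0}^{l+1}C_{l+1}^p(-i\kappa)^{(l+1)-p-n}\int_a^s L^{(p,n)}(s,t)e^{i\kappa(t-s)}dt$. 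The delicate point is the finite sum: differentiating its $q=n-1$ terms produces new terms $L^{(p,n)}(s,s)$ whose second index $n$ lies outside $\aU_{n,l+1}$, and these must cancel exactly against the boundary terms $L^{(p,n)}(s,s)$ thrown off by the Leibniz rule (the coefficients do match), so that $C^{[n]}(I^2)$ smoothness is never exceeded. A short check of the surviving index ranges --- in particular of lower limits of the form $\max(0,l-p)$ and the boundary cases $p=0$ and $p=l+1$ --- then identifies the remaining finite-sum terms with those indexed by $\aU_{n,l+1}$ and coefficient $C_{l+1}^p(-i\kappa)^{(l+1)-1-p-q}$, completing the induction.

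The main obstacle is exactly this last bookkeeping: tracking the index sets through the two reindexings and confirming that the ``second index equals $n$'' terms cancel rather than accumulate. As an alternative route one could instead differentiate $\ell$ itself $l$ times directly; then boundary terms of precisely the shape $e^{-i\kappa s}[L^{(q,0)}(s,s)e^{i\kappa s}]^{(p-1-q)}$ appear, and Lemma \ref{R11L8} is exactly the identity needed to collapse their accumulated sum into the closed form above --- trading the range bookkeeping for the combinatorial identity already established.
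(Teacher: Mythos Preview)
Your primary route---induction on $l$---is correct and genuinely different from the paper's argument. The paper does not induct: it applies the Leibniz rule once to $g(s)=[\ell(s)-r(s)]e^{-i\kappa s}$ to write $g^{(l)}(s)=\sum_{p=0}^l C_l^p(-i\kappa)^{l-p}e^{-i\kappa s}\{\ell^{(p)}(s)-r^{(p)}(s)\}$, computes $\ell^{(p)}$ by repeated differentiation under the integral sign (producing exactly the boundary terms $e^{-i\kappa s}[L^{(q,0)}(s,s)e^{i\kappa s}]^{(p-1-q)}$), applies the integration-by-parts formula (\ref{R8E37}) to the remaining integral $\int_a^s L^{(p,0)}(s,t)e^{i\kappa t}dt$, and then invokes Lemma~\ref{R11L8} to collapse the double sum of boundary terms into the closed form $-\sum_{(q,p)\in\aU_{n,l}}C_l^p(-i\kappa)^{l-1-p-q}L^{(p,q)}(s,s)$. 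In other words, your ``alternative route'' is precisely the paper's proof.

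The trade-off is clean: your induction avoids Lemma~\ref{R11L8} entirely (so the combinatorial identity (\ref{R18E39}) is never needed) but pays for it with the index bookkeeping you flag---the Pascal merging across the two reindexed copies and the exact cancellation of the $q=n$ terms against the Leibniz boundary terms. The paper's approach pushes all the combinatorics into Lemma~\ref{R11L8}, so the proof of Lemma~\ref{R11L1} itself is short, at the cost of proving that auxiliary identity first. Both are sound; your inductive route is arguably more self-contained since it reaches the target formula without the separate combinatorial lemma.
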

\begin{proof}
By the Leibniz formula we have that
\begin{equation}\label{R11E9}
\begin{split}
g^{(l)}(s)= \sum_{p=0}^lC_l^p(-i\kappa )^{l-p}e^{-i\kappa s} \left\{
\ell^{(p)}(s)-r^{(p)}(s) \right\},\ \ s\in I.
\end{split}
\end{equation}
A direct computation gives
\begin{equation}\label{R11E10}
\begin{split}
\ell^{(p)}(s)=& \sum_{q=0}^{p-1}  \left[L^{(q,0)}(s,s) e^{i\kappa s}\right]^{(p-1-q)} +\int_{a}^s L^{(p,0)}(s,t) e^{i\kappa t}dt.
\end{split}
\end{equation}
By applying (\ref{R8E37}) to the second term on the right hand side
of (\ref{R11E10}) and combining  definition (\ref{R10E2}) of $r$, we
obtain that
\begin{equation}\label{R11E11}
\begin{split}
\int_{a}^s L^{(p,0)}(s,t)  e^{i\kappa t}dt-r^{(p)}(s)=&e^{i\kappa
s}\sigma_{n} \left[L^{(p,0)}(s,\bcdot)
\right](s)+\frac{(-1)^n}{(i\kappa )^{n}} \int_{a}^s L^{(p,n)}(s,t)
e^{i\kappa t}dt.
\end{split}
\end{equation}
To simplify the expression of $g^{(l)}$, we substitute the
expression (\ref{R8E1}) of
$\sigma_{n}\left[L^{(p,0)}(s,\bcdot)\right](s)$ into (\ref{R11E11})
and then substitute (\ref{R11E10}) and (\ref{R11E11}) into
(\ref{R11E9}). Finally, by combining the simplified expression of
$g^{(l)}$ with Lemma \ref{R11L8} and using the notation $\aU_{n,l}$,
we obtain the desired formula (\ref{R18E47}) for $g^{(l)}$.
\end{proof}

We next show that $\tilde{H}^m_{\kappa,0}(I)$ is closed under a
Volterra integral operator to be defined below. For $a\in I$ and
$L\in C(I^2)$, we define the Volterra integral operator $\bC_a:
L^2(I)\rightarrow C(I)$ by
\begin{equation}\label{R18E3}
(\bC_a[L]u)(s):=\int_{a}^sL(s,t)e^{i\kappa|s-t|}u(t)dt,\ \ s\in I.
\end{equation}
For each $n \in\bN_0$, we define the norm for $C^{[n]}(I^2)$ by
$\|L\|_{C^{[n]}}:=\max_{p,q\in\bZ_{n+1}}\|L^{(p,q)}\|_\infty$.
Denote by $C^{[n]}_{\kappa,0}(I^2)$ the non-$\kappa$-oscillatory
space in $C^{[n]}(I^2)$. Thus, for $L\in C^{[n]}_{\kappa,0}(I^2)$,
there exists a positive constant $c$ independent of $\kappa$ such
that $\|L\|_{C^{[n]}}<c$.

\begin{lemma}\label{R10L2}
If $L\in C^{[m]}_{\kappa,0}(I^2)$, then $\tilde{H}^m_{\kappa,0}(I)$ is closed under $\bC_a[L]$.
\end{lemma}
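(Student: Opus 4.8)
\noindent\textit{Proof proposal.} The plan is to argue by induction on $m$ after reducing the statement to a short list of model Volterra integrals. Since $\bC_a[L]$ is linear and $\tilde H^m_{\kappa,0}(I)$ consists of the sums $u_0+u_1e^{i\kappa\bcdot}+u_2e^{-i\kappa\bcdot}$ with $u_j\in H^m_{\kappa,0}(I)$, it is enough to show that each of $\bC_a[L]u_0$, $\bC_a[L](u_1e^{i\kappa\bcdot})$, $\bC_a[L](u_2e^{-i\kappa\bcdot})$ belongs to $\tilde H^m_{\kappa,0}(I)$ for $u_0,u_1,u_2\in H^m_{\kappa,0}(I)$. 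Splitting $I$ at the fixed point $a$, resolving $|s-t|$ on each sub-interval, and pulling the resulting factor $e^{\pm i\kappa s}$ outside the integral, every model term becomes an outer exponential times an integral $\int_{*}L(s,t)w(t)e^{i\lambda\kappa t}\,dt$ with $w\in\{u_0,u_1,u_2\}$ and $\lambda\in\{0,\pm1,\pm2\}$; the contribution from the part of the integration range lying on the fixed side of $a$ has fixed support, so one integration by parts in $t$ turns it into an $\bO(\kappa^{-1})$ amplitude that is $C^{[m]}$ in $s$, which is harmless, and we concentrate on the genuine Volterra piece.

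For a given component and side of $a$, $\lambda=0$ holds precisely when the pulled-out exponential cancels that component's oscillation; then $\int_{*}L(s,t)w(t)\,dt$ is non-oscillatory, a direct estimate of its $s$-derivatives of order $\le m$ (using $w\in H^m(I)$, $\|w\|_{H^m}\le c$, and $L\in C^{[m,0]}(I^2)$) bounds its $H^m(I)$ norm uniformly in $\kappa$, and the model term is an element of $H^m_{\kappa,0}(I)$ times $e^{\pm i\kappa s}$, hence lies in $\tilde H^m_{\kappa,0}(I)$. Using this with the genuinely smooth seed $w\equiv\mathbf{1}$ (the constant $1$) and $\lambda\ne0$ establishes, via the Volterra decomposition behind Lemma~\ref{R11L1}, that $\bC_a[L]\mathbf{1}\in\tilde H^m_{\kappa,0}(I)$: one writes $\int_{*}L(s,t)e^{i\lambda\kappa t}\,dt=g(s)e^{i\lambda\kappa s}+r(s)$, where \eqref{R18E47} shows that every power of $\kappa$ occurring in $g^{(l)}$, $l\le m$, is nonpositive (so $g\in H^m_{\kappa,0}(I)$) and $r$ is an $\bO(\kappa^{-1})$ combination of kernel values that are $C^{[m]}$ in $s$ (so $r\in H^m_{\kappa,0}(I)$); multiplying by the outer exponential, the combined phase $\pm1+\lambda$ lies in $\{-1,0,1\}$ in every case.

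The delicate part is each nonresonant component, where the inner amplitude $L(s,t)w(t)$, $w\in\{u_0,u_1,u_2\}$, is only of class $H^m$ (hence $C^{m-1}$) in $t$, so Lemma~\ref{R11L1} cannot be invoked directly without losing a derivative. Here I would use the induction on $m$, the base case $m=0$ following from the $L^2$ bound~\eqref{R11E15}. Integrating by parts against the \emph{whole} kernel $L(s,t)e^{i\kappa(s-t)}$, which keeps $u_0$ undifferentiated in the boundary term and leaves only $u_0'$ under the remaining integral, gives for the $u_0$-component with $s>a$
\begin{equation*}
(\bC_a[L]u_0)(s)=u_0(s)\,(\bC_a[L]\mathbf{1})(s)-\int_a^su_0'(t)\,v(s,t)\,dt,\qquad v(s,t):=\int_a^tL(s,\tau)e^{i\kappa(s-\tau)}\,d\tau,
\end{equation*}
with the analogous identity in the other cases. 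The first term lies in $\tilde H^m_{\kappa,0}(I)$ because $\bC_a[L]\mathbf{1}\in\tilde H^m_{\kappa,0}(I)$ and $H^m(I)$ is a Banach algebra. For the second term I would substitute the one-step integration-by-parts expansion of the inner integral in $v$, which exhibits $v(s,t)$ as $(i\kappa)^{-1}$ times an explicit combination of $L(s,a)e^{i\kappa(s-a)}$ and $L(s,t)e^{i\kappa(s-t)}$, up to a remainder carrying another factor $\kappa^{-1}$; this turns $-\int_a^su_0'v\,dt$ into $(i\kappa)^{-1}(\bC_a[L]u_0')(s)$, plus an explicit member of $H^m_{\kappa,0}(I)$ times $e^{i\kappa\bcdot}$, plus a remainder of the same shape with an extra $\kappa^{-1}$; as $u_0'\in H^{m-1}_{\kappa,0}(I)$ and $L\in C^{[m]}(I^2)\subset C^{[m-1]}(I^2)$, the induction hypothesis at level $m-1$ governs $\bC_a[L]u_0'$ and the remainder.

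The main obstacle, as I see it, is exactly this last step: the scheme hinges on a careful accounting of powers of $\kappa$ against orders of differentiation, so that each $\kappa^{-1}$-weighted leftover in a nonresonant branch, combined with the induction hypothesis, returns to $\tilde H^m_{\kappa,0}(I)$ without any step ever requiring more than $m$ derivatives of the non-oscillatory factors $u_0,u_1,u_2$. Making this bookkeeping rigorous, and confirming that the smooth-seed Volterra decomposition really yields $H^m(I)$ amplitudes with $\kappa$-independent bounds, is the technical core of the argument.
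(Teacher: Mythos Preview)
Your concern that Lemma~\ref{R11L1} ``cannot be invoked directly without losing a derivative'' is the crux, but it is misplaced, and the detour through induction on $m$ creates a real gap rather than avoiding one. In your inductive step the term $(i\kappa)^{-1}\bC_a[L]u_0'$ is controlled by the hypothesis at level $m-1$ only in $\tilde H^{m-1}_{\kappa,0}(I)$; the prefactor $\kappa^{-1}$ buys decay in $\kappa$ but no additional Sobolev regularity, so you cannot conclude membership in $\tilde H^{m}_{\kappa,0}(I)$. Iterating the trick merely pushes the deficit down to $\kappa^{-m}\bC_a[L]u_0^{(m)}$ with $u_0^{(m)}$ only in $L^2$, and bounding that in $H^m$ uniformly in $\kappa$ requires exactly the direct derivative computation you were trying to sidestep. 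The same regularity-versus-decay mismatch afflicts the ``remainder of the same shape with an extra $\kappa^{-1}$''.

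The paper's route is shorter and bypasses this entirely. For $v\in H^m_{\kappa,0}(I)$ and, say, $a<s$, one writes $(\bC_a[L]v)(s)=e^{i\kappa s}\int_a^s L(s,t)v(t)e^{-i\kappa t}\,dt$ and applies the decomposition \eqref{R8E32} and the explicit derivative formula \eqref{R18E47} with the \emph{composite} kernel $(s,t)\mapsto L(s,t)v(t)$. Your worry is that this kernel is not $C^{[m]}$ in $t$ because $v$ is only $C^{m-1}$; but look at \eqref{R18E47}: in the first sum the $t$-derivative index $q$ ranges over $\bZ_m$, so only $v,\dots,v^{(m-1)}$ are evaluated pointwise, while the sole $m$-th $t$-derivative $(Lv)^{(p,m)}$ sits inside an integral, where $v^{(m)}\in L^2(I)$ is enough. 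The formula therefore remains valid for this kernel, and bounding each piece directly (cf.\ \eqref{R18E30}--\eqref{R18E31}) gives $\tilde g\in H^m_{\kappa,0}(I)$ with a $\kappa$-independent constant; the boundary piece $\tilde r$ involves at most $m-1$ $t$-derivatives of $v$ and is handled as you already sketched. The cases $\bC_a[L](ve^{\pm i\kappa\bcdot})$ go the same way. No induction on $m$ is needed.
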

\begin{proof}
In this proof, we shall write $\bC_a[L]$ as $\bC_a$ for convenience.
Note that for any $u\in \tilde{H}^m_{\kappa,0}(I)$, there exist
$v_j\in H_{\kappa,0}^m(I),j\in\bZ_4^+$ such that
$$
u(s)=v_1(s)+v_2(s)e^{i\kappa s} +v_3(s)e^{-i\kappa s}, \ \ s\in I.
$$
Hence, it suffices to prove that for each $v\in H_{\kappa,0}^m(I)$,
$\bC_av$, $\bC_a(ve^{i\kappa\bcdot})$, and $\bC_a(ve^{-i\kappa
\bcdot})$ are in $\tilde{H}^m_{\kappa,0}(I)$.

We first consider the case of $a<s$. In this case, by the definition
of $(\bC_av)$, we have that
$$
(\bC_av)(s)=e^{i\kappa s} \int_{a}^s L(s,t)v(t) e^{-i\kappa t}dt.
$$
To decompose $\bC_av$, we define two functions
\begin{equation*}
  {\tilde r}(s) := -e^{-i\kappa a} \sigma_{m} [L(s,\bcdot)v(\bcdot)](a),\ \ s\in I,
\end{equation*}and
\begin{equation*}
  {\tilde g}(s) :=\left[\int_{a}^s L(s,t)v(t) e^{-i\kappa t}dt-r(s)\right]e^{i\kappa s},\ \ s\in I.
\end{equation*}
Hence, we have that
\begin{equation}\label{R11E26}
 (\bC_a v)(s)= {\tilde g}(s)+{\tilde r}(s)e^{i\kappa s}, \ \ s\in I.
\end{equation}
It remains to prove that ${\tilde g}, {\tilde r}\in
H_{\kappa,0}^m(I)$. In fact, both ${\tilde g}$ and ${\tilde r}$
satisfy a somewhat stronger condition that they belong to
$C_{\kappa,0}^m(I)$, the non-$\kappa$-oscillatory space based on
$C^m(I).$ For each $l\in\bZ_{m+1}$, we apply $D^l$  to ${\tilde r}$
and take its maximum norm to obtain that
\begin{equation*}
\begin{split}
\left\|{\tilde r}^{(l)}\right\|_\infty\leq &
\sum_{q=0}^{m-1}\frac{1}{\kappa^{q+1}} \max_{s\in I}
\left|(Lv)^{(l,q)}(s,a) \right| \leq \frac{2^mm}{\kappa}\max_{q\in
\bZ_m}\|L^{(l,q)}\|_{\infty}\|v\|_{C^{m-1}}.
\end{split}
\end{equation*}
Since $v\in H_{\kappa,0}^m(I)$, there exists a positive constant $c$
independent of $\kappa$ such that  $\|v\|_{C^{m-1}} \leq
c\|v\|_{H^m}$. Thus, ${\tilde r}\in C_{\kappa,0}^m(I)$. By Lemma
\ref{R11L1}, we have that for $l\in\bZ_{m+1}$
\begin{equation}\label{tilde g}
{\tilde g}^{(l)}(s)
=-\sum_{(q,p)\in\aU_{m,l}}C_l^p(ik)^{l-1-p-q}(Lv)^{(p,q)}(s,s)
+\sum_{p=0}^l C_l^p(ik)^{l-p-m}\int_a^s(Lv)^{(p,m)}(s,t)e^{-i
\kappa(t-s)}dt.
\end{equation}
The first summation in the right hand side of \eqref{tilde g} is
bounded by
\begin{equation}\label{R18E30}
\begin{split}
\sum_{(q,p)\in \aU_{m,l}} C_l^{p}k^{l-1-p-q}\max_{s\in I}\left|
(Lv)^{(p,q)}(s,s)\right|\leq \frac{2^{m+l}m}{\kappa}\max_{(q,p)\in
\aU_{m,l}}\|L^{(p,q)}\|_\infty\|v\|_{C^{m-1}},\;
\end{split}
\end{equation}
and the second summation is bounded by
\begin{equation}\label{R18E31}
\begin{split}
\sum_{p=0}^lC_l^p\kappa^{l-p-m} \max_{s\in I} \int_{-1}^1\Bigg|
(Lv)^{(p,m)}(s,t)\Bigg|dt \leq
\sqrt{2}2^{2m+1}\kappa^{l-m}\max_{p\in\bZ_{l+1}}\|L^{(p,m)}\|_\infty\|v\|_{H^m},\
\ l\in\bZ_{m+1}.
\end{split}
\end{equation}
The combination of (\ref{R18E30}) and (\ref{R18E31}) yields a positive
constant $c$ independent of $\kappa$ such that
\[
\left\|{\tilde g}^{(l)}\right\|_\infty \leq
c\max_{p,q\in\bZ_{m+1}}\|L^{(p,q)}\|_\infty\|v\|_{H^m}.
\]
We conclude that ${\tilde g}\in  C_{\kappa,0}^m(I)$. Therefore,
$\bC_av\in \tilde{H}^m_{\kappa,0}(I)$. Likewise, we can show that
$\bC_a(ve^{i\kappa \bcdot}), \bC_a(ve^{-i\kappa \bcdot})\in
\tilde{H}^m_{\kappa,0}(I)$.

The case when $a\geq s$ can be similarly handled.
\end{proof}

We are now ready to prove the main result of this section. To this
end, we define the set $\aA$ of oscillatory Fredholm integral
operators by
\[
\aA:=\left\{\bT: \bT:=\bA_\epsilon[L_1,L_2],\epsilon=0,1,\; L_j\in
C^{[m]}_{\kappa,0}(I^2), j=1,2 \right\}.
\]
The next theorem reveals that the space $\tilde{H}^m_{\kappa,0}(I)$
is closed under summations of products of operators in $\aA$.

\begin{theorem}\label{R11L3}
If $\bT_j\in \aA$, $\beta_j\in\bN_0$, $\alpha_l\in\bN_0$ and
$n\in\bN$, space $\tilde{H}^m_{\kappa,0}(I)$ is closed under
$\sum_{l=1}^n\prod_{j=1}^{\alpha_l}\bT_j^{\beta_j}$.
\end{theorem}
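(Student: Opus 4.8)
The plan is to reduce everything to Lemma~\ref{R10L2}. First, since $\tilde{H}^m_{\kappa,0}(I)$ is a vector space, I would note that it suffices to prove closedness under each single product $\prod_{j=1}^{\alpha_l}\bT_j^{\beta_j}$, as the closedness under the finite sum $\sum_{l=1}^n$ then follows immediately. Each such product is a finite composition of operators from $\aA$ (a power $\bT_j^{\beta_j}$ being the $\beta_j$-fold self-composition of $\bT_j$, with the conventions $\bT_j^0=\bI$ and that the empty product is $\bI$, both of which clearly preserve the space). Since every $\bT\in\aA$ maps $L^2(I)$ into $C(I)\subseteq L^2(I)$ and $\tilde{H}^m_{\kappa,0}(I)\subseteq H^m(I)\subseteq L^2(I)$, all these compositions are well defined on $\tilde{H}^m_{\kappa,0}(I)$. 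Hence, by a routine induction on the number of factors, the theorem reduces to the single assertion that $\tilde{H}^m_{\kappa,0}(I)$ is closed under an arbitrary $\bT=\bA_\epsilon[L_1,L_2]\in\aA$.

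To prove that assertion I would rewrite $\bA_\epsilon$ in terms of the Volterra operators $\bC_a$ appearing in Lemma~\ref{R10L2}. Because $|s-t|=s-t$ for $t\in[-1,s]$ and $|s-t|=t-s$ for $t\in[s,1]$, the definitions (\ref{R18E1}) and (\ref{R18E3}) give $(\bC_{-1}[L_1]u)(s)=\int_{-1}^sL_1(s,t)e^{i\kappa(s-t)}u(t)dt$ and $(\bC_1[L_2]u)(s)=-\int_s^1L_2(s,t)e^{i\kappa(t-s)}u(t)dt$, so that
\[
\bA_\epsilon[L_1,L_2]=\bC_{-1}[L_1]+(-1)^{\epsilon+1}\bC_1[L_2].
\]
Since $L_1,L_2\in C^{[m]}_{\kappa,0}(I^2)$, Lemma~\ref{R10L2} shows $\tilde{H}^m_{\kappa,0}(I)$ is closed under each of $\bC_{-1}[L_1]$ and $\bC_1[L_2]$, and the vector space property then gives closedness under their linear combination, i.e. under $\bT$. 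Together with the reduction above, this finishes the proof.

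I do not expect any serious obstacle here: once Lemma~\ref{R10L2} is in hand, the argument is essentially bookkeeping. The two places that will need a little care are the sign and absolute-value bookkeeping when passing from $\bA_\epsilon$ to $\bC_{\pm1}$ (splitting the integration domain at $s$ and tracking the $(-1)^\epsilon$ factor), and the verification that the iterated compositions never leave the domain on which the operators act, which is ensured by the embeddings $\tilde{H}^m_{\kappa,0}(I)\subseteq H^m(I)\subseteq L^2(I)$ together with $\bA_\epsilon\colon L^2(I)\to C(I)$.
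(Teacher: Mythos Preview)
Your proposal is correct and follows essentially the same approach as the paper: reduce to a single operator $\bT\in\aA$ by the vector-space property and induction on the number of factors, split $\bT=\bA_\epsilon[L_1,L_2]$ into two Volterra operators of the type $\bC_a$, and invoke Lemma~\ref{R10L2}. Your explicit identity $\bA_\epsilon[L_1,L_2]=\bC_{-1}[L_1]+(-1)^{\epsilon+1}\bC_1[L_2]$ (with the sign bookkeeping correct) is precisely the splitting the paper alludes to without writing out.
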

\begin{proof}
The proof is based on the fact that a Fredholm integral operator can
be split into two oscillatory Volterra integral operators.
Specifically, an operator $\bT\in\aA$ can be split into two Volterra
integral operators of the same kind as $\bC_a$. Hence, according to
Lemma \ref{R10L2}, $\tilde{H}^m_{\kappa,0}(I)$ is closed under
$\bT$.

Suppose that $\bT$ is a product of any operators in $\aA$,  that is,
$\bT=\prod_{j=1}^n \bT_j$, $n\in \bN_0$ where $\bT_j\in \aA,
j\in\bZ_{n}^+$. Since for any $u\in \tilde{H}^m_{\kappa,0}(I)$
$\bT_j u\in \tilde{H}^m_{\kappa,0}(I)$, $j\in\bZ_{n}^+$, by
induction we observe that $\bT u\in \tilde{H}^m_{\kappa,0}(I)$ which
reveals that $\tilde{H}^m_{\kappa,0}(I)$ is closed under $\bT$.
\end{proof}

A direct application of Theorem \ref{R11L3} yields the following
corollary concerning the iterated operators of $\bK$.

\begin{corollary}\label{R11C1}
If $K\in C^{[m]}(I^2)$ is independent of $\kappa$, then
$\tilde{H}^m_{\kappa,0}(I)$ is closed under $\bK^n, n\in \bN$.
\end{corollary}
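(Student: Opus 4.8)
The plan is to reduce Corollary \ref{R11C1} to Theorem \ref{R11L3} by exhibiting $\bK^n$ as an operator of the form covered by that theorem. First I would observe that the hypothesis ``$K\in C^{[m]}(I^2)$ is independent of $\kappa$'' immediately gives $K\in C^{[m]}_{\kappa,0}(I^2)$, since a function independent of $\kappa$ is trivially non-$\kappa$-oscillatory: its $C^{[m]}$ norm is a fixed constant, so $\kappa^{-0}\|K\|_{C^{[m]}}$ is bounded for all $\kappa$. Consequently, recalling from Section \ref{sec;2} that $\bK=\bA_0[K]=\bA_0[K,K]$, we have $\bK\in\aA$ with $\epsilon=0$ and $L_1=L_2=K$.

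Next I would apply Theorem \ref{R11L3} with the simplest possible choice of parameters: take $n=1$ (a single summand), $\alpha_1=1$ (a single factor in the product), $\bT_1=\bK\in\aA$, and $\beta_1$ to be the desired exponent. Then $\sum_{l=1}^{1}\prod_{j=1}^{1}\bT_j^{\beta_j}=\bK^{\beta_1}$, and Theorem \ref{R11L3} asserts precisely that $\tilde{H}^m_{\kappa,0}(I)$ is closed under this operator. Since $\beta_1\in\bN_0$ is arbitrary, this covers $\bK^n$ for every $n\in\bN$ (and even $n=0$, the identity, trivially). That is the entire argument; the corollary is an immediate specialization.

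There is essentially no obstacle here — the only point requiring a moment's thought is the translation between the hypotheses of the corollary (stated in terms of classical smoothness and $\kappa$-independence of $K$) and those of Theorem \ref{R11L3} (stated in terms of membership in $C^{[m]}_{\kappa,0}(I^2)$), which is resolved by the one-line remark that $\kappa$-independence implies non-$\kappa$-oscillatory. If one wanted to avoid invoking the full generality of Theorem \ref{R11L3}, an alternative would be to argue directly by induction on $n$: the base case $n=1$ is that $\tilde{H}^m_{\kappa,0}(I)$ is closed under $\bK=\bA_0[K]$, which follows by splitting $\bK$ into two Volterra operators of type $\bC_a[K]$ (namely $\bC_{-1}[K]$ plus the backward integral from $s$ to $1$) and applying Lemma \ref{R10L2}; the inductive step uses $\bK^{n}u=\bK(\bK^{n-1}u)$ together with $\bK^{n-1}u\in\tilde{H}^m_{\kappa,0}(I)$. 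But since Theorem \ref{R11L3} is already established, the clean path is simply to cite it with the trivial parameter choice above.
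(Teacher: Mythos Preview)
Your proposal is correct and matches the paper's own proof: the paper simply notes that $\bK=\bA_0[K]$ with $K\in C^{[m]}(I^2)$ independent of $\kappa$ (hence $K\in C^{[m]}_{\kappa,0}(I^2)$), and then invokes Theorem~\ref{R11L3} directly. Your explicit identification of the parameter choices $n=1$, $\alpha_1=1$, $\bT_1=\bK$, $\beta_1=n$ just spells out what the paper leaves implicit.
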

\begin{proof}
Since $\bK=\bA_0[K]$ and $K\in C^{[m]}(I^2)$ is independent of
$\kappa$, the result of this corollary follows directly from Theorem
\ref{R11L3}.
\end{proof}

Corollary \ref{R11C1} will play an important role in the next
section in understanding the oscillatory structure of the solution
of equation (\ref{R9E15}).

\section{Oscillation of Solutions of Oscillatory Fredholm Equations}
\label{sec;5}

We study in this section the oscillation property of the solution of
equation (\ref{R9E15}). This is done by considering the iterated
integral operators of $\bK$. Specifically, the main purpose of this
section is to establish that if $K\in C^{[m]}(I^2)$ is independent
of $\kappa$ and $f\in \tilde{H}^m_{\kappa,0}(I)$, the solution of
equation (\ref{R9E15}) belongs to $\tilde{H}^m_{\kappa,0}(I)$.

The solution $y$ of (\ref{R9E15}) can be represented by using
iterated integral operators of $\bK$. In fact, by successive
substitutions, we obtain from (\ref{R9E15}) that
\begin{equation}\label{R18E15}
y=\bK^ny+\sum_{j=0}^{n-1}\bK^jf,\ \ n\in\bN.
\end{equation}
It is clear that the oscillatory property of the solution depends on
the properties of the iterated integral operators of $\bK$. It
follows from Corollary \ref{R11C1} that if $f\in
\tilde{H}^m_{\kappa,0}(I)$, $\sum_{j=0}^{n-1}\bK^jf$ is in
$\tilde{H}^m_{\kappa,0}(I)$. It remains to prove that there exists a
number $n\in \bN$ such that $\bK^ny \in \tilde{H}^m_{\kappa,0}(I)$.
According to Theorem \ref{R11P2},
the solution of (\ref{R9E15}) belongs to $H^m_\kappa(I)(i.e. \cap_{n=0}^m H^n_{\kappa,n}(I))$. Hence, it
suffices to show that there exists an $n\in\bN$ such that $\bK^n:
H^m_\kappa(I)\rightarrow \tilde{H}^m_{\kappa,0}(I)$.

The next lemma shows that for $L\in C^{[m]}_{\kappa,0}(I^2)$
satisfying an additional condition, the integral operator $\bC_a[L]$
defined by (\ref{R18E3}) can ease oscillation of functions in
$H^m_\kappa(I)$.

\begin{lemma}\label{R11L12}
Let $n\in\bZ_{m+1}^+$. If $L\in C^{[m]}_{\kappa,0}(I^2)$  and  there
exists a positive constant $c_0$ independent of $\kappa$ such that
\begin{equation}\label{R8E36}
\max_{s\in I}\left| L^{(\alpha,\beta)}(s,s) \right|\leq
c_0\kappa^{-(n-\alpha-\beta-1)}, \ \ 0\leq \alpha+\beta+1\leq n,
\end{equation}
then $\bC_a[L]: H^m_\kappa(I)\rightarrow
\tilde{H}^{m}_{\kappa,m-n}(I)$.
\end{lemma}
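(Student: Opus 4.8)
The plan is to decompose $\bC_a[L]u$ into a harmonic-wave part and a non-oscillatory part, exactly as in the proof of Lemma \ref{R10L2}, and then track how the hypothesis (\ref{R8E36}) on the diagonal values of $L$ and its derivatives controls the growth in $\kappa$ of each piece. Since any $u\in H^m_\kappa(I)$ splits as $u=v_1+v_2e^{i\kappa\bcdot}+v_3e^{-i\kappa\bcdot}$ with $v_j\in H^m_{\kappa,0}(I)$ is \emph{not} available here (that is the conclusion we are after for the solution, not a hypothesis on $u$), I would instead work directly with $u\in H^m_\kappa(I)$, i.e. $\|u\|_{H^p}\le c\kappa^p$ for $p\in\bZ_{m+1}$. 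First I would treat the case $a<s$, writing
\[
(\bC_a[L]u)(s)=e^{i\kappa s}\int_a^s L(s,t)u(t)e^{-i\kappa t}\,dt,
\]
and set $\tilde L(s,t):=L(s,t)u(t)$, so that $\bC_a[L]u$ is an oscillatory Volterra integral function of the type studied in Lemma \ref{R11L1} (with $e^{-i\kappa t}$ in place of $e^{i\kappa t}$, which only changes signs). Applying the decomposition (\ref{R8E32})–(\ref{R18E47}) with $n$ as in the statement, I get $\bC_a[L]u=\tilde g+\tilde r\,e^{i\kappa s}$, where $\tilde r$ involves $\sigma_n[\tilde L(s,\bcdot)](a)$ and $\tilde g^{(l)}$ is given by the explicit formula from Lemma \ref{R11L1}: a diagonal sum $\sum_{(q,p)\in\aU_{n,l}}C_l^p(i\kappa)^{l-1-p-q}(Lu)^{(p,q)}(s,s)$ plus an integral remainder $\sum_{p=0}^l C_l^p(i\kappa)^{l-p-n}\int_a^s(Lu)^{(p,n)}(s,t)e^{-i\kappa(t-s)}\,dt$.

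The heart of the argument is estimating these two pieces for $l\in\bZ_{m+1}$ and showing each is $\bO(\kappa^{l-n})$, which is precisely the requirement for $\tilde g\in H^m_{\kappa,m-n}(I)$ (since $\tilde g\in X_{\kappa,m-n}$ with $X=H^n$ amounts to $\|\tilde g^{(l)}\|_2\le c\kappa^{m-n}$ for $l\le m$... more carefully, one needs $\|\tilde g\|_{H^p}\le c\kappa^{p+(m-n)^+}$-type bounds, so I would be careful to match the exact definition of $H^m_{\kappa,m-n}(I)=\cap_{n'}H^{n'}_{\kappa,n'}$ intersected appropriately — the cleanest route is to show $\tilde g\in C^m_{\kappa,m-n}(I)$, i.e. $\|\tilde g^{(l)}\|_\infty\le c\kappa^{(m-n)}$ uniformly, and similarly $\tilde r$). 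For the diagonal sum: by Leibniz, $(Lu)^{(p,q)}(s,s)=\sum_{b}C_q^b L^{(p,q-b)}(s,s)\,u^{(b)}(s)$, wait — one must be careful which variable the $u$-derivatives hit; since $u=u(t)$ is the second-variable factor, $(Lu)^{(p,q)}(s,t)=\sum_{b=0}^q C_q^b L^{(p,q-b)}(s,t)u^{(b)}(t)$, evaluated at $t=s$. Then $|L^{(p,q-b)}(s,s)|\le c_0\kappa^{-(n-p-(q-b)-1)}$ by (\ref{R8E36}) whenever $p+(q-b)+1\le n$ (and is $\bO(1)$ otherwise, since $L\in C^{[m]}_{\kappa,0}$), while $\|u^{(b)}\|_\infty\le c\|u\|_{C^b}\le c\kappa^{b}$ by the embedding $H^{b+1}\hookrightarrow C^b$ and $u\in H^m_\kappa(I)$. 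Multiplying by the prefactor $\kappa^{l-1-p-q}$ and using $q+p\ge l$ on $\aU_{n,l}$, a term-by-term count of powers of $\kappa$ gives at most $\kappa^{(l-1-p-q)+b-(n-p-(q-b)-1)}=\kappa^{l-n+2b-q}\le\kappa^{l-n}$ since $b\le q$ — so the diagonal sum is $\bO(\kappa^{l-n})$ as required. For the integral remainder: $\int_a^s(Lu)^{(p,n)}(s,t)e^{-i\kappa(t-s)}dt$ is bounded in $L^2$ (or $L^\infty$) by $c\sum_{b\le n}\|L^{(p,n-b)}\|_\infty\|u^{(b)}\|_{?}$; the worst term has $b=n$ giving $\|u^{(n)}\|\le c\kappa^n$, but then $L^{(p,0)}$ carries no help from (\ref{R8E36}) — however the prefactor is $\kappa^{l-p-n}$, so the product is $\kappa^{l-p-n+n}=\kappa^{l-p}\le\kappa^l$, which is $\bO(\kappa^{l})$ but I need $\bO(\kappa^{l-n+(m-n)})$... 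Here is where I would re-examine: the target space is $\tilde H^m_{\kappa,m-n}(I)$, whose components sit in $H^m_{\kappa,m-n}(I)$, and $H^m_{\kappa,m-n}=\cap_{p\le m}H^p_{\kappa,p}\cap(\text{order }m-n\text{ in }H^m)$; actually from the paper's definitions $u\in H^m_{\kappa,m-n}$ demands $\|u\|_{H^p}\le c\kappa^p$ for $p\in\bZ_{m+1}$ \emph{and} the $m-n$-oscillatory bound — so bounding $\|\tilde g^{(l)}\|\le c\kappa^l$ for all $l\le m$ together with the sharper $\|\tilde g^{(l)}\|\le c\kappa^{m-n}$ when that is smaller, is exactly what is needed, and the integral-remainder bound $\bO(\kappa^{l-p})\le\bO(\kappa^l)$ suffices for the $H^p$ part while for the oscillation order one uses $l\le m$ and $p=0$ gives $\kappa^l\le\kappa^m$; combined with the diagonal bound $\bO(\kappa^{l-n})$ the delicate cancellation hypothesis (\ref{R8E36}) is what forces the genuine reduction.

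I expect the main obstacle to be bookkeeping: matching the many summation indices in Lemma \ref{R11L1}'s formula against the precise exponent in hypothesis (\ref{R8E36}) so that every term lands below $\kappa^{l-n}$ (for the diagonal part) resp.\ $\kappa^l$ (for the integral part), and in particular handling the terms where $p+(q-b)+1>n$, for which (\ref{R8E36}) gives no decay and one must rely purely on the negative prefactor $\kappa^{l-1-p-q}$ and $b\le q$. After the $a<s$ case is done, the $a\ge s$ case is handled by the symmetric decomposition (splitting $\bC_a$ on $[s,a]$ with the conjugate phase), exactly as in the last line of the proof of Lemma \ref{R10L2}. Finally, to conclude $\bC_a[L]u\in\tilde H^m_{\kappa,m-n}(I)$ one assembles $\bC_a[L]u=\tilde g+\tilde r\,e^{i\kappa s}$ with $\tilde g,\tilde r\in C^m_{\kappa,m-n}(I)\subset H^m_{\kappa,m-n}(I)$, which is of the required structured form with $M=1$ (only the $e^{i\kappa t}$ wave appears here); including the $a\ge s$ piece brings in $e^{-i\kappa t}$, so the full structure $\{e^{i\kappa t},e^{-i\kappa t}\}$ is used, consistent with the definition of $\tilde H^m_{\kappa,\cdot}(I)$ in Section \ref{sec;4}.
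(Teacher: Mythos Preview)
Your approach via the decomposition of Lemma~\ref{R11L1} differs from the paper's and, as written, has a genuine gap in the integral-remainder estimate. The paper does \emph{not} split $\bC_a[L]u$ as $\tilde g+\tilde r\,e^{i\kappa s}$; it simply writes $(\bC_a[L]u)(s)=u_1(s)e^{i\kappa s}$ with $u_1(s):=\int_a^s L(s,t)e^{-i\kappa t}u(t)\,dt$ and proves $u_1\in H^m_{\kappa,m-n}(I)$ by differentiating $u_1$ directly in $s$. The crucial point is that this direct differentiation yields
\[
u_1^{(l)}(s)=\delta_l(s)+\int_a^s L^{(l,0)}(s,t)e^{-i\kappa t}u(t)\,dt,
\]
where the integral carries \emph{no} $t$-derivatives of $u$ and is therefore bounded by $c\|L^{(l,0)}\|_\infty\|u\|_2=O(1)$. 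All the $\kappa$-growth is concentrated in the diagonal sum $\delta_l$, where hypothesis~(\ref{R8E36}) applies termwise to give $\|\delta_l\|_2\le c\kappa^{l-n}$, hence $\|u_1^{(l)}\|_2\le c\max\{\kappa^{l-n},1\}$ and $u_1\in H^m_{\kappa,m-n}(I)$.

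By contrast, your route through Lemma~\ref{R11L1} performs $n$ integrations by parts in $t$, and the resulting integral remainder $\sum_{p}C_l^p(i\kappa)^{l-p-n}\int_a^s(Lu)^{(p,n)}(s,t)e^{-i\kappa(t-s)}dt$ contains $u^{(n)}$, whose $L^2$ norm is $O(\kappa^n)$. Combined with the prefactor $\kappa^{l-p-n}$ at $p=0$ this gives only $O(\kappa^l)$, and at $l=m$ this is $\kappa^m$, not the required $\kappa^{m-n}$. Your attempt to reconcile this by reinterpreting the definition of $H^m_{\kappa,m-n}$ is mistaken: membership in $H^m_{\kappa,m-n}$ means precisely $\|\cdot\|_{H^m}\le c\kappa^{m-n}$, so you genuinely need $\|\tilde g^{(m)}\|_2\le c\kappa^{m-n}$. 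A concrete obstruction: take $u(t)=e^{i\kappa t}$. Then $u_1(s)=\int_a^s L(s,t)\,dt$ is $\kappa$-independent, so the paper's representation shows $\bC_a[L]u\in\tilde H^m_{\kappa,0}$; but your $\tilde g$ equals $(u_1-\tilde r)e^{i\kappa s}$ with $u_1-\tilde r$ nonzero and $\kappa$-bounded, so $\|\tilde g^{(m)}\|_2\sim\kappa^m$. Thus $\bC_a[L]u$ \emph{is} in $\tilde H^m_{\kappa,m-n}$, just not via your particular decomposition. (There is also a minor arithmetic slip in your diagonal-sum count: the power is exactly $l-n$, not $l-n+2b-q$.) The fix is to drop the Lemma~\ref{R11L1} decomposition here and differentiate $u_1$ directly, keeping all $t$-derivatives off of $u$.
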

\begin{proof} We provide proof for the case $a<s$ only since the other case can be
similarly handled. According to the definition (\ref{R18E3}), we
have that $\left(\bC_a[L] u\right)(s)=u_1(s)e^{i\kappa s}$, for
$s\in I,$ where $ u_1(s):= \int_{a}^s L(s,t)e^{-i\kappa t}u(t)dt$.
It suffices to prove that $u_1\in H^{m}_{\kappa,m-n}(I)$.

Differentiation of $u_1$ leads to
\begin{equation}\label{R10E9}
u_1^{(l)}(s)=\sum_{p=0}^{l-1}\sum_{q=0}^{l-1-p}C_{l-1-q}^p
\left[L^{(q,0)}(s,s)\right]^{(l-1-q-p)}\left[e^{-i\kappa s}u(s)
\right]^{(p)}+ \int_{a}^sL^{(l,0)}(s,t) e^{-i\kappa t}u(t)dt,\ \
l\in Z_{m+1}.
\end{equation}
By using the Cauchy-Schwarz inequality, the $L^2$ norm of the second
term on the right hand side of (\ref{R10E9}) is bounded by
$2\max_{q\in\bZ_{m+1}}\left\| L^{(q,0)}\right\|_{\infty}\|u\|_2$.

We next bound the first term (denoted by $\delta_l(z)$) of the right
hand side of (\ref{R10E9}). Since $u\in H^m_\kappa(I)$, by the
Leibniz formula, it is easy to verify that there exists a positive
constant $c$ independent of $\kappa$ such that
\begin{equation}\label{R18E12}
\left\|\left[e^{-ik\bcdot}u \right]^{(p)}\right\|_2\leq c\kappa^p,\ \ p\in \bZ_m.
\end{equation}
The $L^2$ norm of the terms with $p\leq l-n$ in $\delta_l$ is
bounded by $c\kappa^{l-n}$ according to (\ref{R18E12}) and $L\in
C^{[m]}_{\kappa,0}(I^2)$. If $p>l-n$, then the factor
$\left[L^{(q,0)}(s,s)\right]^{(l-1-q-p)}$ can be represented by a
linear combination of $L^{(\alpha,\beta)}(s,s)$,
$0\leq\alpha+\beta\leq l-1-p$. Thus, the $L^2$ norm of the terms in
$\delta_l(z)$ for $p>l-n$ is bounded by $c\kappa^{l-n}$ according to
the condition (\ref{R8E36}) and (\ref{R18E12}). Hence, there exists
a positive constant $c$ independent $\kappa$ such that
$\|\delta_l\|_2\leq c\kappa^{l-n}$, for all $l\in \bZ_{m+1}.$
Combining the bounds of the two terms in (\ref{R10E9}), we obtain
that there exists a positive constant $c$ independent $\kappa$ such
that $\|u_1^{(l)}\|_2\leq c\max\{\kappa^{l-n},1\}$, for all $l\in
\bZ_{m+1}$. Therefore, we have that $u_1\in H^m_{\kappa,m-n}(I)$.
\end{proof}

The oscillation order of a function in $H^m_\kappa(I)$ is $m$ and
that of a function in $\tilde{H}^m_{\kappa,m-n}(I)$ is $m-n$
according to the definition of $\kappa$-oscillatory function of
order $n$. In this sense, the Volterra integral operators $\bC_a[L]$
with $L\in C^{[m]}_{\kappa,0}(I^2)$ satisfying (\ref{R8E36}) can
reduce the oscillation order of functions in $H^m_\kappa(I)$ by $n$,
for $n\in\bZ_{m+1}^+$, according to Lemma \ref{R11L12}. We call a
bivariate function $L$ the less oscillatory kernel of order $n$ if
$L\in C^{[m]}_{\kappa,0}(I^2)$ and satisfies (\ref{R8E36}). In the
next lemma, we shall extend the result of Lemma \ref{R11L12} to the
Fredholm integral operator.

\begin{lemma}\label{R11L2}
Let $n\in\bZ_{m+1}^+$. If $L_j, j=1,2$, are the less oscillatory
kernels of order $n$, then for $\epsilon=0,1$,
$\bA_\epsilon[L_1,L_2]: H^m_\kappa(I) \rightarrow
\tilde{H}^m_{\kappa,m-n}(I)$.
\end{lemma}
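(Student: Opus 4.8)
The plan is to reduce $\bA_\epsilon[L_1,L_2]$ to a linear combination of two Volterra operators of the type $\bC_a[L]$ and then invoke Lemma \ref{R11L12}. The first step is to recognize that, in the definition (\ref{R18E1}), the first integral runs over $t\in[-1,s]$, where $|s-t|=s-t$, so it coincides exactly with $\left(\bC_{-1}[L_1]u\right)(s)$; similarly, the second integral runs over $t\in[s,1]$, where $|s-t|=t-s$, and since $\int_1^s=-\int_s^1$ it equals $-\left(\bC_1[L_2]u\right)(s)$. Hence
\[
\bA_\epsilon[L_1,L_2]u=\bC_{-1}[L_1]u+(-1)^{\epsilon+1}\bC_1[L_2]u,\qquad \epsilon=0,1.
\]

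Next I would check that each $L_j$ satisfies the hypotheses of Lemma \ref{R11L12}: being a less oscillatory kernel of order $n$ means precisely that $L_j\in C^{[m]}_{\kappa,0}(I^2)$ and that the diagonal bound (\ref{R8E36}) holds with some $\kappa$-independent constant. Lemma \ref{R11L12} then applies with $a=-1$ (the case $a\le s$) and with $a=1$ (the case $a\ge s$, whose proof is indicated there to be analogous), yielding that for every $u\in H^m_\kappa(I)$ both $\bC_{-1}[L_1]u$ and $\bC_1[L_2]u$ belong to $\tilde H^m_{\kappa,m-n}(I)$.

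Finally, since $n\in\bZ_{m+1}^+$ forces $m-n$ to be a nonnegative integer and $\tilde H^m_{\kappa,m-n}(I)$ is a vector space, the linear combination $\bC_{-1}[L_1]u+(-1)^{\epsilon+1}\bC_1[L_2]u=\bA_\epsilon[L_1,L_2]u$ lies in $\tilde H^m_{\kappa,m-n}(I)$ as well, for both $\epsilon=0,1$, which is the claim. There is no real difficulty left once Lemma \ref{R11L12} is in hand; the only points requiring care are the bookkeeping of the sign and of the absolute value $|s-t|$ when splitting the Fredholm kernel at $t=s$, and the verification that the two endpoint choices $a=\mp 1$ are covered by the two cases ($a<s$ and $a\ge s$) treated in Lemma \ref{R11L12}.
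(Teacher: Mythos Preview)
Your proposal is correct and follows essentially the same approach as the paper: the paper's proof merely notes that $\bA_\epsilon[L_1,L_2]$ splits into two Volterra operators of the type $\bC_a[L]$ and then invokes Lemma~\ref{R11L12}, which is precisely what you do with the explicit decomposition $\bA_\epsilon[L_1,L_2]u=\bC_{-1}[L_1]u+(-1)^{\epsilon+1}\bC_1[L_2]u$. Your version simply spells out the sign bookkeeping and the vector-space closure that the paper leaves implicit.
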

\begin{proof}
Note that the operator $\bA_\epsilon[L_1,L_2]$ can be split as two
Volterra integral operators of the type $\bC_a[L]$. The result of
this lemma follows directly from Lemma \ref{R11L12}.
\end{proof}

We next study the effect of the iterated integral operators (IIO) of
$\bK$ applied to $H^m_\kappa(I)$. This is done by using induction on
the order of the iterated integral operators and finally choosing an
suitable $n\in \bN$ such that $\bK^nH^m_\kappa(I)\subset
\tilde{H}^m_{\kappa,0}(I)$. Notice that the iterated integral
operator of $\bK$ of order 1 is equal to $\bK$. Since $K\in
C^{[m]}(I^2)$ is independent of $\kappa$, we conclude that $K\in
C^{[m]}_{\kappa,0}(I^2)$ and there exists a constant $c$ independent
of $\kappa$ such that $\max_{z\in I}|K(z,z)|\leq c$. Hence, $K$ is
the less oscillatory kernel of order $1$. By Lemma \ref{R11L2},
$\bK$ maps $H^m_\kappa(I)$ into $\tilde{H}^m_{\kappa,m-1}(I)$. This
means that $\bK$ reduces the oscillatory order of functions in
$H^m_\kappa(I)$ by 1.

We shall show that for $n\in\bZ^+_{m+1}$, $\bK^n$ can reduce the
oscillatory order of functions in $H^m_{\kappa}(I)$ by $n$. Because
the IIO of a high order is the composition of $\bK$ and the IIO of
one order lower, the property of the IIO of a high order can be
derived through induction on the order. To decompose a composite
operator, we need to decompose its corresponding kernel. Since the
kernel of the composite operator is represented by an oscillatory
integral whose integrand has the non-differentiable oscillator, we
need to split the integral domain. To this end, we introduce
auxiliary functions to represent the decomposition of the kernel.
For $\epsilon=1,2$, we let $K_{1,\epsilon}:=K$. For $1<n\in \bN$ and
$\epsilon=1,2,$ we define
\begin{equation}
L_{n-1,\epsilon}(s,t,x):=K(s,x)K_{n-1,\epsilon}(x,t), \ \ (s,t,x)\in I^3.
\end{equation}
The function $L_{n-1,\epsilon}$ is the less oscillatory part in the
integrand of the kernel of  the composite operator. To represent the
decomposition,  for $1<n\in \bN$ and $\epsilon=1,2,$ we define
\begin{equation}
J_{n,\epsilon}(s,t):=(-1)^{\epsilon-1}\int_t^sL_{n-1,\epsilon}(s,t,x)dx, \ \ (s,t)\in I^2.
\end{equation}
Let $\aV:=\{(1,-1,2,t),(2,-1,2,s), (3,1,1,s), (4,1,1,t)\}.$ For $1<n\in \bN$ and $(j,\tilde{j},\epsilon_j, z) \in \aV,$
\begin{equation}
Q_{n,j}(s,t):=-\tilde{j}\int_{\tilde{j}}^z L_{n-1,\epsilon_j}(s,t,x) e^{2i\tilde{j}\kappa x}dx,\ \  (s,t)\in I^2.
\end{equation}
Noting that $Q_{n,j}$ still possesses certain oscillation, with the
decomposition (\ref{R10E2}) and (\ref{R10E1}) of oscillatory
Volterra integrals, for $1<n\in \bN$ and $(j,\tilde{j},\epsilon_j,
z) \in \aV,$ we define
\begin{equation}
R_{n,j}(s,t):=\tilde{j} e^{2i\kappa}  \tilde{\sigma}_{m}\left[ L_{n-1,\epsilon_j}(s,t,\bcdot)\right](\tilde{j}),\ \ (s,t)\in I^2,
\end{equation}
\begin{equation}\label{R18E51}
G_{n,j}(s,t):=\left(Q_{n,j}(s,t)-R_{n,j}(s,t)\right)e^{-2i\tilde{j} \kappa z} ,\ \ (s,t)\in I^2,
\end{equation}
where $\tilde{\sigma}_{m}$ is defined in the same way as
$\sigma_{m}$ with $\kappa$ being replaced by $2\tilde{j}\kappa$. It
is easy to check that $R_{n,1}=R_{n,2}$ and $R_{n,3}=R_{n,4}$ and
for $(j,\tilde{j},\epsilon_j, z) \in \aV,$
\begin{equation}\label{R18E48}
Q_{n,j}(s,t)=G_{n,j}(s,t)e^{2i\tilde{j} \kappa z}+R_{n,j}(s,t).
\end{equation}
For $1<n\in \bN$ and $\epsilon=1,2,$ we let
\begin{equation}
K_{n,\epsilon}(s,t):= G_{n,\epsilon}(s,t)+J_{n,\epsilon}(s,t) +G_{n,\epsilon+2}(s,t),\ \ (s,t)\in I^2 .
\end{equation}
The functions $K_{n,\epsilon}, \epsilon=1,2$ are the less
oscillatory part of the kernel for the new integral operator which
is separated from the composite operator. To better understand the
functions defined here, readers are referred to the proof of Lemma
\ref{R11L5}. We shall present the property of $K_{n,\epsilon}$,
$n\in \bZ_{m+1}^+$, $\epsilon=1,2$. To this end, we study in the
next lemma the property of $J_{n,\epsilon}$, for $\epsilon=1,2$ and
$1<n\leq m$.

\begin{lemma}\label{R11L9}
Let $1<n\leq m$. If $K\in C^{[m]}(I^2)$ is independent of $\kappa$
and $K_{n-1,\epsilon}, \epsilon=1,2$ are the less oscillatory
kernels of order $n-1$, then $J_{n,\epsilon}$, $\epsilon=1,2$, are
the less oscillatory kernels of order $n$.
\end{lemma}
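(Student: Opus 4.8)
The plan is to check, directly and for each $\epsilon\in\{1,2\}$, the two defining requirements of a less oscillatory kernel of order $n$: that $J_{n,\epsilon}\in C^{[m]}_{\kappa,0}(I^2)$, and that $J_{n,\epsilon}$ satisfies the diagonal bound (\ref{R8E36}) with parameter $n$. Both will be read off from one computation, namely the explicit form of $\partial_s^p\partial_t^qJ_{n,\epsilon}$ for $p,q\in\bZ_{m+1}$, obtained by differentiating $J_{n,\epsilon}(s,t)=(-1)^{\epsilon-1}\int_t^sL_{n-1,\epsilon}(s,t,x)\,dx$ repeatedly, applying the fundamental theorem of calculus each time a variable limit is hit. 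Since $L_{n-1,\epsilon}(s,t,x)=K(s,x)K_{n-1,\epsilon}(x,t)$, this produces a ``fully interior'' term $(-1)^{\epsilon-1}\int_t^sK^{(p,0)}(s,x)K_{n-1,\epsilon}^{(0,q)}(x,t)\,dx$ together with a finite sum, with integer coefficients depending only on $p,q$, of boundary terms $\bigl[\partial_u^a\partial_v^b\partial_w^cL_{n-1,\epsilon}\bigr](s,t,\xi)$ with $\xi\in\{s,t\}$ and $a+b+c\le p+q-1$.

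For the regularity claim the key combinatorial point is that in the boundary terms the first slot of $L_{n-1,\epsilon}$ collects only $\partial_s$--derivatives (order $\le p\le m$), the second slot only $\partial_t$--derivatives (order $\le q\le m$), and the shared middle slot $x$ collects only $\partial_s$--derivatives at an $x=s$ boundary and only $\partial_t$--derivatives at an $x=t$ boundary (order $\le\max\{p,q\}\le m$). Expanding $\partial_w^cL_{n-1,\epsilon}(s,t,x)=\sum_iC_c^iK^{(0,i)}(s,x)K_{n-1,\epsilon}^{(c-i,0)}(x,t)$ by the Leibniz rule keeps every index in $\bZ_{m+1}$, so each quantity occurring is a product of a derivative of $K$ of order $\le m$ in each argument and a derivative of $K_{n-1,\epsilon}$ of order $\le m$ in each argument. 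Because $K$ is independent of $\kappa$ and $K_{n-1,\epsilon}\in C^{[m]}_{\kappa,0}(I^2)$, all of these are continuous in $(s,t)$ and bounded in sup-norm by a constant multiple of $\|K\|_{C^{[m]}}\|K_{n-1,\epsilon}\|_{C^{[m]}}$, uniformly in $\kappa$; the factor $|s-t|\le2$ is harmless. Hence $\partial_s^p\partial_t^qJ_{n,\epsilon}$ is continuous and uniformly bounded for all $p,q\in\bZ_{m+1}$, i.e.\ $J_{n,\epsilon}\in C^{[m]}_{\kappa,0}(I^2)$.

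For the diagonal decay, on $s=t$ the fully interior term vanishes because its limits coincide (in particular $J_{n,\epsilon}(s,s)=0$, which settles the case $\alpha=\beta=0$), so for $\alpha+\beta\le n-1$ the quantity $J_{n,\epsilon}^{(\alpha,\beta)}(s,s)$ is a finite combination of terms $\sum_iC_c^iK^{(a,i)}(s,s)K_{n-1,\epsilon}^{(c-i,b)}(s,s)$ with $a+b+c\le\alpha+\beta-1$. The factors $K^{(a,i)}(s,s)$ are bounded independently of $\kappa$, and since $(c-i)+b\le b+c\le\alpha+\beta-1\le n-2$, the order $n-1$ bound (\ref{R8E36}) for $K_{n-1,\epsilon}$ applies and gives
$$
\bigl|K_{n-1,\epsilon}^{(c-i,b)}(s,s)\bigr|\le c_0\,\kappa^{-\bigl((n-1)-(c-i)-b-1\bigr)}\le c_0\,\kappa^{-(n-\alpha-\beta-1)},
$$
using $(c-i)+b\le\alpha+\beta-1$, $n-\alpha-\beta-1\ge0$, and $\kappa>1$. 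Summing the finitely many boundary contributions yields $\bigl|J_{n,\epsilon}^{(\alpha,\beta)}(s,s)\bigr|\le c\,\kappa^{-(n-\alpha-\beta-1)}$ whenever $\alpha+\beta+1\le n$, with $c$ independent of $\kappa$, which is precisely (\ref{R8E36}) with parameter $n$. The whole argument is symmetric in $\epsilon$, so it applies verbatim to $J_{n,1}$ and $J_{n,2}$.

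The main obstacle, and the only step needing genuine care, is the bookkeeping for the boundary terms: one must confirm both that no slot of $L_{n-1,\epsilon}$ ever accumulates more than $m$ derivatives — this is why the Volterra form above is preferable to the substitution $x=t+\tau(s-t)$, which would load the middle variable with up to $2m$ derivatives — and that every boundary term carries at least one fewer derivative than the total order $\alpha+\beta$, since it is exactly this ``lost'' derivative that upgrades the order $n-1$ decay of $K_{n-1,\epsilon}$ to the order $n$ decay of $J_{n,\epsilon}$. Both facts follow from a straightforward induction on $p+q$ based on $\partial_s\int_t^sG\,dx=G(s,t,s)+\int_t^s\partial_sG\,dx$ and $\partial_t\int_t^sG\,dx=-G(s,t,t)+\int_t^s\partial_tG\,dx$ together with the chain rule applied to the already-formed boundary terms.
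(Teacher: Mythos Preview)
Your proof is correct and follows essentially the same approach as the paper: differentiate the Volterra integral $J_{n,\epsilon}(s,t)=(-1)^{\epsilon-1}\int_t^sL_{n-1,\epsilon}(s,t,x)\,dx$, note that the interior term vanishes on the diagonal, and bound the boundary terms using the hypothesis on $K_{n-1,\epsilon}$ together with the key observation that each boundary term carries one fewer total derivative. The paper records the boundary terms explicitly via the auxiliary functions $S_\alpha(s,t):=L_{n-1,\epsilon}^{(\alpha,0,0)}(s,t,s)$ and $P_\beta(s,t):=L_{n-1,\epsilon}^{(l,\beta,0)}(s,t,t)$ and the Leibniz expansions (\ref{R8E3})--(\ref{R8E4}), whereas you describe their structure abstractly and in addition spell out the $C^{[m]}_{\kappa,0}$ verification (which the paper dismisses as ``straightforward''); but the underlying mechanism is identical.
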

\begin{proof}
Since $K\in C^{[m]}(I^2)$ is independent of $\kappa$ and $
K_{n-1,\epsilon}\in C^{[m]}_{\kappa,0}(I^2), \epsilon=1,2$, it is
straightforward to show that $J_{n,\epsilon}\in
C^{[m]}_{\kappa,0}(I^2)$, $\epsilon=1,2$. It suffices to prove that
$J_{n,\epsilon}$, $\epsilon=1,2$, satisfy (\ref{R8E36}). To this
end, we consider the maximum bound of $J_{n,\epsilon}^{(l,p)}(s,s)$
for $l+p\leq n-1$. For the concise representation of
$J_{n,\epsilon}^{(l,p)}(s,s)$, let
$S_\alpha(s,t):=L_{n-1,\epsilon}^{(\alpha,0,0)}(s,t,s) ,$
$\alpha\in\bZ_l$ and
$P_\beta(s,t):=L_{n-1,\epsilon}^{(l,\beta,0)}(s,t,t)$,
$\beta\in\bZ_{p}$. A direct calculation with the help of
(\ref{R11E10}) yields
\begin{equation}\label{R18E41}
J_{n,\epsilon}^{(l,p)}(s,s)=(-1)^{\epsilon-1}\left(\sum_{\alpha=0}^{l-1}
S_\alpha^{(l-1-\alpha,p)}(s,s)-\sum_{\beta=0}^{p-1}
P_\beta^{(0,p-1-\beta)}(s,s)\right).
\end{equation}
By using the Leibniz rule, we have that
\begin{equation}\label{R8E3}
S_\alpha^{(l-1-\alpha,p)}(s,s)=\sum_{\gamma=0}^{l-1-\alpha}C_{l-1-\alpha}^\gamma
\left[ K^{(\alpha,0)}(s,s)\right]^{(\gamma)}
K_{n-1,\epsilon}^{(l-1-\alpha-\gamma,p)}(s,s)
\end{equation}
and
\begin{equation}\label{R8E4}
P_\beta^{(0,p-1-\beta)}(s,s)=\sum_{\gamma=0}^{p-1-\beta}C_{p-1-\beta}^\gamma
K^{(l,\gamma)}(s,s) \left. \left[
K_{n-1,\epsilon}^{(0,\beta)}(t,t)\right]^{(p-1-\beta-\gamma)}\right|_{t=s}.
\end{equation}
Since $K\in C^{[m]}(I^2)$ is independent of $\kappa$, the
derivatives of $K$ appeared in  (\ref{R8E3}) and (\ref{R8E4}) are
uniformly bounded by a constant $c$ independent of $\kappa$. Note
that the orders of derivatives of $K_{n-1,\epsilon}$ in (\ref{R8E3})
and (\ref{R8E4}) are at most $n-2$. Recalling that
$K_{n-1,\epsilon}, \epsilon=1,2$ are the less oscillatory kernel of
order $n-1$, the derivative of $K_{n-1,\epsilon}$ in (\ref{R8E3})
and (\ref{R8E4}) is uniformly  bounded by $c\kappa^{l+p+1-n}$. Thus,
$S_\alpha^{(l-1-\alpha,p)}(s,s)$ and $P_\beta^{(0,p-1-\beta)}(s,s)$
are bounded uniformly by $c\kappa^{l+p+1-n}$ for a positive constant
$c$ independent of $\kappa$. By using (\ref{R18E41}) we conclude
that $J_{n,\epsilon}, \epsilon=1,2$, satisfy (\ref{R8E36}).
\end{proof}

Next, we consider functions $R_{n,j}$ and $G_{n,j}$.

\begin{lemma}\label{R11L10}
Let $1<n\leq m$. If $K\in C^{[m]}(I^2)$ is independent of $\kappa$
and $K_{n-1,\epsilon}, \epsilon=1,2$, are the less oscillatory
kernels of order $n-1$, then for $j\in\bZ_5^+$, $R_{n,j}, G_{n,j}\in
C^{[m]}_{\kappa,0}(I^2)$ and $G_{n,j}$ are the less oscillatory
kernels of order $n$.
\end{lemma}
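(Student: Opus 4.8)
The plan is to read each $Q_{n,j}$, $j\in\bZ_5^+$, as an oscillatory Volterra integral of exactly the type decomposed in (\ref{R10E2})--(\ref{R8E32}), but with wavenumber $2\tilde{j}\kappa$ (of magnitude $2\kappa$), lower limit $\tilde{j}\in\{-1,1\}$, integration variable $x$, and less oscillatory factor $L_{n-1,\epsilon_j}$, the upper limit being the variable $z$ prescribed by $\aV$ while the other of $s,t$ acts as an inert parameter. Under this identification $R_{n,j}$ and $G_{n,j}$ are precisely the functions $r$ and $g$ of that decomposition, so that, for each fixed value of the parameter variable, $G_{n,j}$ regarded as a function of $z$ is the function to which Lemma \ref{R11L1} applies with the integer $n$ there taken to be $m$. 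Concretely, a single application of (\ref{R8E37}) to $Q_{n,j}$ yields $G_{n,j}(s,t)=-\tilde{j}\,\tilde{\sigma}_m[L_{n-1,\epsilon_j}(s,t,\bcdot)](z)-\tilde{j}\,\frac{(-1)^m}{(2i\tilde{j}\kappa)^m}e^{-2i\tilde{j}\kappa z}\int_{\tilde{j}}^z L_{n-1,\epsilon_j}^{(0,0,m)}(s,t,x)e^{2i\tilde{j}\kappa x}\,dx$, where the superscript $(0,0,m)$ denotes the $m$-th partial derivative in $x$.

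First I would record the regularity and $\kappa$-uniformity of the integrand: since $K\in C^{[m]}(I^2)$ is independent of $\kappa$ and $K_{n-1,\epsilon}\in C^{[m]}_{\kappa,0}(I^2)$ by hypothesis, the Leibniz rule shows that every partial of $L_{n-1,\epsilon}(s,t,x)=K(s,x)K_{n-1,\epsilon}(x,t)$ of order at most $m$ in each variable is a finite sum of products of a bounded $\kappa$-independent derivative of $K$ with a uniformly bounded derivative of $K_{n-1,\epsilon}$, hence is bounded uniformly in $\kappa$. The claim $R_{n,j}\in C^{[m]}_{\kappa,0}(I^2)$ is then immediate from the definition of $\tilde{\sigma}_m$, each of whose $m$ summands carries a factor $(2\tilde{j}\kappa)^{-(b+1)}$, $b\in\bZ_m$, while differentiating $R_{n,j}$ in $s$ and $t$ only produces such bounded partials of $L_{n-1,\epsilon_j}$; in fact $\|R_{n,j}\|_{C^{[m]}}\le c\kappa^{-1}$. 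The same ingredients applied to the displayed formula for $G_{n,j}$, together with the Leibniz integral rule, bound every partial of $G_{n,j}$ up to order $m$ uniformly in $\kappa$ (the $\tilde{\sigma}_m$-part by $c\kappa^{-1}$, the remainder part by $c\kappa^{\,\alpha-m}\le c$ because $\alpha\le m$), which gives $G_{n,j}\in C^{[m]}_{\kappa,0}(I^2)$.

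It remains to prove the sharper diagonal estimate (\ref{R8E36}) for $G_{n,j}$, i.e.\ that $G_{n,j}$ is a less oscillatory kernel of order $n$. For this I would evaluate $\partial_s^\alpha\partial_t^\beta G_{n,j}$ on the diagonal $s=t$ for $\alpha+\beta+1\le n$. Differentiating the displayed formula and using the Leibniz rule to expand each diagonal value of a partial of $L_{n-1,\epsilon_j}$ at $s=t=z$ produces a finite sum in which the $\tilde{\sigma}_m$-part consists of terms of magnitude $\kappa^{-(b+1)}$ times a bounded $\kappa$-independent derivative of $K$ at a diagonal point times $K_{n-1,\epsilon_j}^{(c_2,\gamma)}(s,s)$, with $b\in\bZ_m$ and $c_2+\gamma-b\le\alpha+\beta$, while the remainder part consists of terms of magnitude $c\kappa^{\,\alpha-m}$ (or $c\kappa^{\,\beta-m}$, according as $z=s$ or $z=t$) times a bounded derivative of $K_{n-1,\epsilon_j}$. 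For the first type, when $c_2+\gamma\le n-2$ I would invoke the hypothesis that $K_{n-1,\epsilon}$ is a less oscillatory kernel of order $n-1$, bounding $K_{n-1,\epsilon_j}^{(c_2,\gamma)}(s,s)$ by $c\kappa^{-(n-2-c_2-\gamma)}$; since $c_2+\gamma-b\le\alpha+\beta$ the whole term is then $O(\kappa^{-(n-\alpha-\beta-1)})$. When instead $c_2+\gamma\ge n-1$, the crude uniform bound on $K_{n-1,\epsilon_j}^{(c_2,\gamma)}$ is used, and the constraint forces $b\ge n-1-\alpha-\beta$, so that $\kappa^{-(b+1)}\le\kappa^{-(n-\alpha-\beta)}$. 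The remainder terms are $O(\kappa^{-(n-\alpha-\beta-1)})$ because $c\kappa^{\,\alpha-m}$ (or $c\kappa^{\,\beta-m}$) is bounded by $c\kappa^{\,\alpha+\beta+1-n}$ as soon as $n\le m$. Summation yields $\sup_{s\in I}|G_{n,j}^{(\alpha,\beta)}(s,s)|\le c\kappa^{-(n-\alpha-\beta-1)}$, which is (\ref{R8E36}).

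The main obstacle is precisely this last bookkeeping: keeping track, through the successive Leibniz expansions (in $x$, then in the parameter variable, then on the diagonal), of how each factor $\kappa^{-1}$ gained from $\tilde{\sigma}_m$ is matched with a diagonal derivative of $K_{n-1,\epsilon_j}$ of the right order, so that the less-oscillatory-of-order-$(n-1)$ estimate applies and the exponents add up to exactly $-(n-\alpha-\beta-1)$ in every case, including the borderline regime in which the uniform bound must be substituted and one must appeal to $n\le m$. The asymmetry between $s$ and $t$ (the upper limit is only one of them, and the integrand $L_{n-1,\epsilon_j}$ depends on that same variable) forces the cases $j\in\{1,4\}$, where the limit is $t$, and $j\in\{2,3\}$, where the limit is $s$, to be treated with the roles of $s$ and $t$ interchanged, but no new idea enters; the whole count runs in complete parallel with the proof of Lemma \ref{R11L9}.
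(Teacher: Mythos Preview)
Your proposal is correct and follows essentially the same route as the paper. Both proofs identify $(Q_{n,j},R_{n,j},G_{n,j})$ with the $(\ell,r,g)$ decomposition of Section~\ref{sec;4} for the trivariate kernel $L_{n-1,\epsilon_j}$ (with one of $s,t$ playing the role of the integration limit and the other an inert parameter), invoke the $C^{[m]}_{\kappa,0}$ bounds on $L_{n-1,\epsilon_j}$ coming from the hypothesis on $K$ and $K_{n-1,\epsilon}$ via the Leibniz rule, and then split the diagonal estimate into a $\tilde{\sigma}_m$-part and an $O(\kappa^{-m+\cdot})$ remainder. The only organisational difference is that the paper first differentiates in the parameter variable and then applies Lemma~\ref{R11L1} verbatim to obtain the closed formula~(\ref{R18E34}) for $G_{n,1}^{(l,p)}$, from which the case split $\alpha+\beta\ge n-2$ versus $\alpha+\beta<n-2$ is read off directly; you instead differentiate your displayed expression for $G_{n,j}$ and do the exponent bookkeeping term by term. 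The two computations are equivalent, and your tracking of the constraint $c_2+\gamma-b\le\alpha+\beta$ together with the dichotomy $c_2+\gamma\le n-2$ versus $c_2+\gamma\ge n-1$ reproduces exactly the paper's bound.
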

\begin{proof}
Since $K\in C^{[m]}(I^2)$ is independent of $\kappa$ and
$K_{n-1,\epsilon}\in C^{[m]}_{\kappa,0}(I^2)$, $\epsilon=1,2$, by a
proof similar to that of Lemma \ref{R10L2}, $G_{n,j}$ and $R_{n,j}$,
$j\in\bZ^+_5$ can be proved in $C^{[m]}_{\kappa,0}(I^2)$.

We next show that $G_{n,j}$, $j\in\bZ^+_5$ are the less oscillatory kernels of
order $n$. Because $G_{n,j}$, $j\in\bZ^+_5$ have a similar structure, they can be
proved in the same way. Thus, we verify only the case of $G_{n,1}$.
Suppose that $l, p\in\bN$ and $l+p\leq n-1$. From (\ref{R18E51}), $G_{n,1}$ has an explicit expression
\[
G_{n,1}(s,t)=\left[\int_{-1}^tL_{n-1,2}(s,t,x)e^{-2i\kappa x}dx +e^{2i\kappa}\tilde{\sigma}_m\left[L_{n-1,2}(s,t,\cdot)\right](-1)\right] e^{2i\kappa t},\ \ (s,t)\in I^2.
\]
For convenient presentation, we let $S_s(t,x):=L^{(l,0,0)}_{n-1,2}(s,t,x)$.
Then, the $l$-th order partial derivative of $G_{n,1}$ with respect to the variable $s$ has the form
\begin{equation}\label{R18E44}
G_{n,1}^{(l,0)}(s,t)=\left[ \int_{-1}^tS_s(t,x)e^{-2i\kappa x}dx
+e^{2i\kappa} \tilde{\sigma}_{m}\left[S_s(t,\bcdot)\right](-1)
\right]e^{2i\kappa t}.
\end{equation}
Applying Lemma \ref{R11L1} to (\ref{R18E44}) leads to the following explicit expression of $G_{n,1}^{(l,p)}$
\begin{equation}\label{R18E34}
G_{n,1}^{(l,p)}(s,t)=-\sum_{(\alpha,\beta)\in\aU_{m,p}}C_p^\beta
(2i\kappa)^{p-1-\alpha-\beta} S_s^{(\beta,\alpha)}(t,t)
+\sum_{\beta=0}^pC_p^\beta
(2i\kappa)^{p-\beta-m}\int_{-1}^tS^{(\beta,m)}_s(t,x)e^{-2i\kappa(x-t)}dx.
\end{equation}
To show that $G_{n,1}$ is the less oscillatory kernel of
order $n$, we consider the bound of $G^{(l,p)}_{n,1}(s,s)$. We consider the two summations on the right hand side of (\ref{R18E34}) separately.
We first consider the first sum. The terms satisfying $\alpha+\beta\geq n-2$ in the sum are
bounded by $c\kappa^{p+1-n}$ since $p-1-\alpha-\beta\leq p+1-n$. For
any $\alpha+\beta< n-2$, $S_{s}^{(\beta,\alpha)}(s,s)$ is bounded by
$c\kappa^{\alpha+\beta+2-n}$ due to the fact that $K_{n-1,2}$ is the
less oscillatory kernel of order $n-1$. Thus
$\kappa^{p-1-\alpha-\beta}S_s^{(\beta,\alpha)}(s,s)$ is uniformly
bounded by $c\kappa^{p+1-n}$ for all $\alpha,\beta\in \aU_{m,p}$.
Therefore, the first sum on the right hand side of (\ref{R18E34})
is bounded by $c\kappa^{p+1-n}$.

We next consider the second sum. To this end, we let $\aU:=\aU_{m,p}
\cup \{(\alpha, \beta):\alpha=m, \beta\in\bZ_{p+1}\}$ and observe that
\begin{equation}\label{SSS}
S_{s}^{(\beta,\alpha)}(t,x)= \sum_{\gamma=0}^\alpha C_\alpha^\gamma
K^{(l,\gamma)}(s,x)K_{n-1,2}^{(\alpha-\gamma,\beta)}(x,t), \ \
(\alpha,\beta)\in\aU.
\end{equation}
Since $K\in C^{[m]}(I^2)$ is independent of $\kappa$ and $K_{n-1,2}\in  C^{[m]}_{\kappa,0}(I^2)$, there exists a constant $c$ independent of $\kappa$ such that $\|K^{(l,\gamma)}\|_\infty<c$ and $\|K_{n-1,2}^{(\alpha-\gamma, \beta)}\|_\infty<c$  for all $l\leq m$, $\alpha\leq m$, $\beta\leq m$ and $\gamma\leq\alpha$. These estimates together with equation (\ref{SSS}) lead to the estimate $\|S_s^{(\beta,\alpha)}\|_\infty<2^\alpha c^2$.
It follows for $\kappa>1/2$ that
\[
\begin{split}
\left|\sum_{\beta=0}^pC_p^\beta (2i\kappa)^{p-\beta-m}\int_{-1}^tS^{(\beta,m)}_s(t,x)e^{-2i\kappa(x-t)}dx \right| \leq \sum_{\beta=0}^p C_p^\beta(2\kappa)^{p-\beta-m}2^{m+1}c^2 \leq 2^{2p+1}c^2\kappa^{p-m}.
\end{split}
\]
Thus, we conclude that there exists a constant $c_1$ independent of $\kappa$ such that the second term on the right hand side of (\ref{R18E34}) is bounded by $c_1\kappa^{p-m}$.

We now combine the two cases above to obtain the desired result of this lemma.
Noting that $p+1-n$ and $p-m$ are
both no more than $l+p+1-n$, we have that $G^{(l,p)}(s,s)$ is
uniformly bounded by $c\kappa^{l+p+1-n}$ with respect to $s$ for a
positive constant $c$ independent of $\kappa$. Consequently, $G_{n,1}$ is
the less oscillatory kernel of order $n$.
\end{proof}

In the next lemma we establish that $K_{n,\epsilon}$ is the less
oscillatory kernel of order $n$.

\begin{lemma}\label{R11L11}
If $K\in C^{[m]}(I^2)$ is independent of $\kappa$, then
$K_{n,\epsilon}$, $\epsilon=1,2$ are the less oscillatory kernels of
order $n$, $n\in\bZ^+_{m+1}$.
\end{lemma}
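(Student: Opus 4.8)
The plan is to prove Lemma \ref{R11L11} by induction on $n\in\bZ^+_{m+1}$. The base case $n=1$ is already established in the text preceding the lemma: since $K\in C^{[m]}(I^2)$ is independent of $\kappa$, it lies in $C^{[m]}_{\kappa,0}(I^2)$ and $\max_{s\in I}|K(s,s)|\leq c$, so $K_{1,\epsilon}=K$ is the less oscillatory kernel of order $1$ for $\epsilon=1,2$. For the inductive step, assume $1<n\leq m$ and that $K_{n-1,\epsilon}$, $\epsilon=1,2$, are the less oscillatory kernels of order $n-1$. The goal is to conclude that $K_{n,\epsilon}$, $\epsilon=1,2$, are the less oscillatory kernels of order $n$, where by definition $K_{n,\epsilon}=G_{n,\epsilon}+J_{n,\epsilon}+G_{n,\epsilon+2}$.

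The key observation is that all the hard work has been done in Lemmas \ref{R11L9} and \ref{R11L10}. First I would invoke Lemma \ref{R11L9} to conclude that, under the inductive hypothesis, $J_{n,\epsilon}$, $\epsilon=1,2$, are the less oscillatory kernels of order $n$; in particular they lie in $C^{[m]}_{\kappa,0}(I^2)$ and satisfy the pointwise-on-diagonal bound (\ref{R8E36}) with parameter $n$. Next I would apply Lemma \ref{R11L10} to conclude that, under the same hypothesis, $G_{n,j}\in C^{[m]}_{\kappa,0}(I^2)$ for $j\in\bZ_5^+$ and that each $G_{n,j}$ is the less oscillatory kernel of order $n$. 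In particular this covers $G_{n,\epsilon}$ and $G_{n,\epsilon+2}$ for $\epsilon=1,2$, since these indices range over $\{1,2,3,4\}\subset\bZ_5^+$.

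It then remains only to check that the property of being ``the less oscillatory kernel of order $n$'' is preserved under addition, which is immediate: the class $C^{[m]}_{\kappa,0}(I^2)$ is a vector space (a finite sum of functions each bounded in $\|\cdot\|_{C^{[m]}}$ uniformly in $\kappa$ is again so bounded), and the diagonal bound (\ref{R8E36}) is additive since $(K_{n,\epsilon})^{(\alpha,\beta)}(s,s)=G_{n,\epsilon}^{(\alpha,\beta)}(s,s)+J_{n,\epsilon}^{(\alpha,\beta)}(s,s)+G_{n,\epsilon+2}^{(\alpha,\beta)}(s,s)$, so that for $0\leq\alpha+\beta+1\leq n$ the sum is bounded by $3c_0\kappa^{-(n-\alpha-\beta-1)}$. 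Hence $K_{n,\epsilon}$, $\epsilon=1,2$, are the less oscillatory kernels of order $n$, completing the induction. I do not expect any genuine obstacle here; the substance is entirely contained in Lemmas \ref{R11L9} and \ref{R11L10}, and the only point requiring a word of care is verifying that the range of the index $j$ in Lemma \ref{R11L10} (namely $\bZ_5^+=\{1,2,3,4\}$) indeed covers all four functions $G_{n,\epsilon}$, $G_{n,\epsilon+2}$ that appear in the definition of $K_{n,\epsilon}$ for $\epsilon\in\{1,2\}$, together with the trivial remark that the less-oscillatory-kernel property is closed under finite sums.
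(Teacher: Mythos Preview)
Your proposal is correct and follows exactly the approach the paper takes: induction on $n$, with the base case $n=1$ coming from the observation that $K$ itself is a less oscillatory kernel of order $1$, and the inductive step reducing immediately to Lemmas~\ref{R11L9} and~\ref{R11L10}. The paper's own proof is in fact terser than yours---it simply cites these two lemmas and the preceding discussion---so your explicit verification that the less-oscillatory-kernel property is closed under finite sums and that the index range $\bZ_5^+$ covers all needed $G_{n,j}$ is a welcome clarification rather than a deviation.
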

\begin{proof}
According to the discussion presented after Lemma \ref{R11L2}, $K$
is the less oscillatory kernel of order $1$. The result of this
lemma may be proved by induction on $n$ with a help of Lemmas
\ref{R11L9} and \ref{R11L10}.
\end{proof}

We need a set of auxiliary integral operators to represent the
decomposition of the IIO. For $n\in\bN$, we define
$\bK_n:=\bA_0[K_{n,1},K_{n,2}]$. In particular, we have that
$\bK_1=\bK$. We present in the next lemma the decomposition for
iterated integral operators of high orders.

\begin{lemma}\label{R11L5}
If $K\in C^{[m]}(I^2)$ is independent of $\kappa$, then for $u\in
H^m_\kappa(I)$
\begin{equation}\label{R18E46}
\bK^n u=\bK_n u+v,\ \ v\in \tilde{H}^m_{\kappa,0}(I), n\in \bZ_{m+1}^+.
\end{equation}
\end{lemma}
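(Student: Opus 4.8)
The plan is to argue by induction on $n$. For the base case $n=1$ one has $K_{1,1}=K_{1,2}=K$, hence $\bK_1=\bA_0[K,K]=\bK$, and (\ref{R18E46}) holds with $v=0$. For the inductive step, assume the claim for $n-1$, so $\bK^{n-1}u=\bK_{n-1}u+w$ with $w\in\tilde H^m_{\kappa,0}(I)$. Then $\bK^nu=\bK\bK_{n-1}u+\bK w$, and since $K\in C^{[m]}(I^2)$ is independent of $\kappa$, Corollary \ref{R11C1} gives $\bK w\in\tilde H^m_{\kappa,0}(I)$. Thus the whole matter reduces to producing a decomposition $\bK\bK_{n-1}u=\bK_nu+\mathcal{R}u$ in which $\mathcal{R}u\in\tilde H^m_{\kappa,0}(I)$; this is exactly what the auxiliary functions $L_{n-1,\epsilon},J_{n,\epsilon},Q_{n,j},R_{n,j},G_{n,j},K_{n,\epsilon}$ were introduced to carry out.

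To obtain that decomposition, I would expand $(\bK\bK_{n-1}u)(s)$ using the definition of $\bA_0$ for both factors, writing it as four double integrals in the intermediate variable $x$ and the variable $t$, one for each choice of signs in $e^{i\kappa|s-x|}$ and $e^{i\kappa|x-t|}$. In each, apply Fubini to move the $t$-integration outside, split the inner $x$-range at the points $t$ and $s$, and split the outer $t$-range at $s$. Two situations occur. On the sub-domains where $x$ lies between $t$ and $s$, the two exponentials cancel and the inner integral becomes $\int_t^sL_{n-1,\epsilon}(s,t,x)\,dx=\pm J_{n,\epsilon}(s,t)$; these produce, together with the factor $e^{i\kappa|s-t|}$, the $J_{n,\epsilon}$-part of the kernel of $\bK_n$. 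On the sub-domains where $x$ runs to an endpoint $\pm1$, the inner integral is one of the oscillatory integrals $Q_{n,j}(s,t)$, $(j,\tilde j,\epsilon_j,z)\in\aV$; applying the decomposition (\ref{R18E48}), $Q_{n,j}=G_{n,j}e^{2i\tilde j\kappa z}+R_{n,j}$, the phase $e^{2i\tilde j\kappa z}$ combines with the residual exponential so that the $G_{n,j}$-term again carries the oscillator $e^{i\kappa|s-t|}$, while the $R_{n,j}$-term keeps a phase of the form $e^{\pm i\kappa s}e^{\pm i\kappa t}$.

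Collecting: for $t<s$ the contributions of $J_{n,1},G_{n,1},G_{n,3}$ add up, by $K_{n,1}=G_{n,1}+J_{n,1}+G_{n,3}$, to $K_{n,1}(s,t)e^{i\kappa(s-t)}$, and for $t>s$ those of $J_{n,2},G_{n,2},G_{n,4}$ add, by $K_{n,2}=G_{n,2}+J_{n,2}+G_{n,4}$, to $K_{n,2}(s,t)e^{i\kappa(t-s)}$; this is exactly the kernel of $\bK_n=\bA_0[K_{n,1},K_{n,2}]$, so those terms sum to $\bK_nu$. The residual $R_{n,j}$-terms, using $R_{n,1}=R_{n,2}$ and $R_{n,3}=R_{n,4}$ to merge their four Volterra ranges into two full integrals over $I$, assemble into
\[
(\mathcal{R}u)(s)=e^{i\kappa s}\int_{-1}^1 R_{n,1}(s,t)e^{i\kappa t}u(t)\,dt+e^{-i\kappa s}\int_{-1}^1 R_{n,3}(s,t)e^{-i\kappa t}u(t)\,dt .
\]
It remains to check $\mathcal{R}u\in\tilde H^m_{\kappa,0}(I)$. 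Since $K\in C^{[m]}(I^2)$ is $\kappa$-independent and, by Lemma \ref{R11L11}, $K_{n-1,\epsilon}\in C^{[m]}_{\kappa,0}(I^2)$, the definition of $R_{n,j}$ through $\tilde\sigma_m$ (all of whose terms carry a factor $\kappa^{-(p+1)}$, $p\geq0$) gives $R_{n,j}\in C^{[m]}_{\kappa,0}(I^2)$ together with the sharper bound $\|R_{n,j}^{(l,0)}\|_\infty\leq c\kappa^{-1}$ for all $l\in\bZ_{m+1}$, with $c$ independent of $\kappa$. Because the integration limits in $\mathcal{R}u$ no longer involve $s$, differentiating in $s$ acts only on $R_{n,j}(s,t)$, and combining the last bound with $\|u\|_2\leq c$ (which holds since $u\in H^m_\kappa(I)$) yields $\big\|\frac{d^l}{ds^l}\!\int_{-1}^1 R_{n,j}(s,t)e^{\pm i\kappa t}u(t)\,dt\big\|_\infty\leq c\kappa^{-1}$ for $l\in\bZ_{m+1}$; hence each coefficient function lies in $H^m_{\kappa,0}(I)$ and $\mathcal{R}u\in\tilde H^m_{\kappa,0}(I)$. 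Setting $v:=\mathcal{R}u+\bK w$ closes the induction.

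The step I expect to be the real obstacle is the middle one: keeping the region-splitting bookkeeping straight, matching each inner $x$-integral to the correctly signed $J_{n,\epsilon}$ or $Q_{n,j}$, and verifying that the surviving phases collapse exactly to $e^{i\kappa|s-t|}$ for the principal part and to $e^{\pm i\kappa s}e^{\pm i\kappa t}$ for the remainder. The two facts that make this feasible — and that motivate the definitions preceding the lemma — are the cancellation of the oscillators on the "$x$ between $t$ and $s$" sub-domains (producing the $J_{n,\epsilon}$ terms) and the coincidences $R_{n,1}=R_{n,2}$, $R_{n,3}=R_{n,4}$, which collapse the Volterra remainder into full, harmlessly small integrals over $I$.
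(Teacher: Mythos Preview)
Your proposal is correct and follows essentially the same approach as the paper's proof: induction on $n$, reducing the inductive step via Corollary~\ref{R11C1} to the decomposition $\bK\bK_{n-1}u=\bK_nu+\mathcal{R}u$, obtaining that decomposition by splitting the inner $x$-integration at $t$ and $s$ and invoking (\ref{R18E48}) to separate $G_{n,j}$ from $R_{n,j}$, then using $R_{n,1}=R_{n,2}$, $R_{n,3}=R_{n,4}$ to merge the remainder into the two full integrals forming $\mathcal{R}u$ (the paper's $v_1$). The only minor difference is that the paper cites Lemma~\ref{R11L10} directly for $R_{n,j}\in C^{[m]}_{\kappa,0}(I^2)$ and declares $v_1\in\tilde H^m_{\kappa,0}(I)$ ``straightforward,'' whereas you spell out the $\kappa^{-1}$ bound coming from $\tilde\sigma_m$; both routes are valid.
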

\begin{proof}
The proof of this lemma is done by using induction on $n$. The case
for $n=1$ trivially holds. Suppose that the case for $n<m$ is proved
and we consider the case $n+1$. By applying $\bK$ to equation
(\ref{R18E46}), we have that
\[
\bK^{n+1}u=\bK\left(\bK_nu+v\right)=\bK\bK_nu+\bK v.
\]
Since $v\in \tilde{H}^m_{\kappa,0}(I)$, it follows from Corollary
\ref{R11C1} that $\bK v\in \tilde{H}^m_{\kappa,0}(I).$ It remains to
consider the composite operator $\bK\bK_n$. We decompose its kernel
which has the form
\[
\int_{-1}^1K(s,x)K_{n}(x,t)e^{i\kappa(|s-x|+|x-t|)}dx,
\]
where $K_n(x,t):=K_{n,1}(x,t)$ if $x\geq t$ and
$K_n(x,t):=K_{n,2}(x,t)$, otherwise. We next decompose the kernel by
splitting the integral domain to remove the absolute signs. We
consider two cases $t\leq s$ and $t>s$. When $t\leq s$, the integral
domain is split into three parts $[-1, t], [t,s]$ and $[s,1]$. In
this case, the kernel equals
\begin{equation}\label{R18E49}
e^{i\kappa
(s+t)}Q_{n+1,1}(s,t)+e^{i\kappa(s-t)}J_{n+1,1}(s,t)+e^{-i\kappa
(s+t)}Q_{n+1,3}(s,t).
\end{equation}
By substituting (\ref{R18E48}) into (\ref{R18E49}), with the
definition of $K_{n+1,1}$ we write the kernel as
\[
e^{i\kappa (s-t)}K_{n+1,1}(s,t)+e^{i\kappa (s+t)}R_{n+1,1}(s,t)
+e^{-i\kappa (s+t)}R_{n+1,3}(s,t).
\]
In the other case, the integral domain is split into $[-1, s],
[s,t]$ and $[t,1]$ and accordingly the kernel becomes
\[
e^{i\kappa (t-s)}K_{n+1,2}(s,t)+e^{i\kappa (s+t)}R_{n+1,1}(s,t)
+e^{-i\kappa (s+t)}R_{n+1,3}(s,t).
\]
We have already used that $R_{n+1,2}=R_{n+1,1}$ and
$R_{n+1,4}=R_{n+1,3}$. By introducing the function
$$
v_1(s):=\int_IR_{n+1,1}(s,t)e^{i\kappa t}u(t)dte^{i\kappa s}+
\int_IR_{n+1,3}(s,t)e^{-i\kappa t}u(t)dte^{-i\kappa s},
$$
according to the decomposition of the kernel, we have that
\[
\bK\bK_nu=\bK_{n+1}u+v_1.
\]
Since for  $j\in\bZ^+_5$, $R_{n+1,j}\in C^{[m]}_{\kappa,0}(I^2)$, as
proved in Lemma \ref{R11L10} it is straightforward to show that
$v_1\in \tilde{H}^m_{\kappa,0}(I)$. Hence the decomposition
(\ref{R18E46}) for $\bK^{n+1}$ is verified.
\end{proof}

We are now ready to show a property of the IIO of $\bK$, which is
crucial for establishing the main result of this section.

\begin{theorem}\label{R11T3}
If $K\in C^{[m]}(I^2)$ is independent of $\kappa$, then $\bK^n:
H^m_\kappa(I)\rightarrow \tilde{H}^m_{\kappa,m-n}(I)$,
$n\in\bZ_{m+1}^+$.
\end{theorem}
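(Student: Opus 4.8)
The plan is to assemble the theorem from two ingredients already in hand: the decomposition of the iterated operator $\bK^n$ supplied by Lemma \ref{R11L5}, and the mapping property of Fredholm operators with less oscillatory kernels supplied by Lemma \ref{R11L2}. The key point is that the inductive labor has been pushed into the preparatory lemmas, so at this stage no further induction on $n$ is needed; it suffices to run the argument for each fixed $n\in\bZ_{m+1}^+$, which is precisely the common range of validity of Lemmas \ref{R11L5}, \ref{R11L11}, and \ref{R11L2}.

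Concretely, I would fix $u\in H^m_\kappa(I)$ and $n\in\bZ_{m+1}^+$ and write, by Lemma \ref{R11L5}, $\bK^n u=\bK_n u+v$ with $v\in\tilde{H}^m_{\kappa,0}(I)$, where $\bK_n=\bA_0[K_{n,1},K_{n,2}]$. By Lemma \ref{R11L11}, the kernels $K_{n,1}$ and $K_{n,2}$ are less oscillatory kernels of order $n$, that is, they lie in $C^{[m]}_{\kappa,0}(I^2)$ and satisfy the bound (\ref{R8E36}) with this value of $n$. Lemma \ref{R11L2} then applies (with $\epsilon=0$) and gives $\bK_n u\in\tilde{H}^m_{\kappa,m-n}(I)$. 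Next I would record the elementary inclusion $\tilde{H}^m_{\kappa,0}(I)\subseteq\tilde{H}^m_{\kappa,m-n}(I)$, which holds because a non-$\kappa$-oscillatory function trivially obeys $\kappa^{-(m-n)}\|\cdot\|_{H^m}\le c$ for $\kappa\ge1$, so $H^m_{\kappa,0}(I)\subseteq H^m_{\kappa,m-n}(I)$, and because both structured spaces are built over the same structure $\{e^{i\kappa t},e^{-i\kappa t}\}$. Hence $v\in\tilde{H}^m_{\kappa,m-n}(I)$ as well; since $\tilde{H}^m_{\kappa,m-n}(I)$ is a vector space, $\bK^n u=\bK_n u+v$ stays in it, which is exactly the claim. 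The boundary case $n=m$ is covered: there $m-n=0$ and the statement reads $\bK^m\colon H^m_\kappa(I)\to\tilde{H}^m_{\kappa,0}(I)$, i.e. $m$ applications of $\bK$ land in the non-$\kappa$-oscillatory structured space.

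I do not expect any real obstacle inside the proof of the theorem itself; the substantive difficulty has already been absorbed into the lemmas. The two delicate points were: verifying in Lemmas \ref{R11L9}--\ref{R11L11} that the kernels $K_{n,\epsilon}$ obtained by peeling one factor of $\bK$ off the composite operator remain less oscillatory kernels of the \emph{raised} order $n$ (so that each further composition with $\bK$ buys one more power of $\kappa^{-1}$ in the diagonal bound), and the split-domain computation in Lemma \ref{R11L5} that cleanly separates $\bK_n$ from a remainder lying in $\tilde{H}^m_{\kappa,0}(I)$. Granting those lemmas, the present theorem is a one-line consequence; a self-contained proof would simply have to reproduce that bookkeeping.
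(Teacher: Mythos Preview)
Your proposal is correct and follows essentially the same approach as the paper: decompose $\bK^n u=\bK_n u+v$ via Lemma \ref{R11L5}, invoke Lemmas \ref{R11L11} and \ref{R11L2} to place $\bK_n u$ in $\tilde{H}^m_{\kappa,m-n}(I)$, and use the inclusion $\tilde{H}^m_{\kappa,0}(I)\subset\tilde{H}^m_{\kappa,m-n}(I)$ to handle $v$. Your write-up is in fact slightly more detailed than the paper's, spelling out the reason for the inclusion and noting the boundary case $n=m$.
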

\begin{proof}
Since $K\in C^{[m]}(I^2)$ is independent of $\kappa$, by Lemma
\ref{R11L5}, we have the decomposition that $\bK^n u=\bK_n u+v,\;
v\in \tilde{H}^m_{\kappa,0}(I)$ for $u\in H^m_\kappa(I)$. According
to Lemmas \ref{R11L11} and \ref{R11L2}, we observe that $\bK_n:
H^m_\kappa(I)\rightarrow \tilde{H}^m_{\kappa,m-n}(I)$. Noting that
$\tilde{H}^m_{\kappa,0}(I)\subset \tilde{H}^m_{\kappa,m-n}(I)$, we
conclude that $\bK^n u\in \tilde{H}^m_{\kappa,m-n}(I)$ for $u\in
H^m_\kappa(I)$.
\end{proof}

Theorem \ref{R11T3} implies that $\bK^n$ can reduce the oscillation
order of oscillatory functions in $H^m_\kappa(I)$. In particular,
when $n=m$, we have that $\bK^mH^m_\kappa(I)\subset
\tilde{H}^m_{\kappa,0}(I)$. This is our desired result. As a direct
consequence of Theorem \ref{R11T3}, we obtain the main result of
this section, which concerns the oscillatory property of the
solution of equation (\ref{R9E15}).

\begin{theorem}\label{R11T2}
Let $y$ be the solution of equation (\ref{R9E15}). If $K\in
C^{[m]}(I^2)$ is independent of $\kappa$ and $f\in
\tilde{H}^m_{\kappa,0}(I)$, then
$y\in \tilde{H}^m_{\kappa,0}(I)$.
\end{theorem}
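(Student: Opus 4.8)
\emph{Proof proposal.}
The plan is to combine the successive-substitution representation (\ref{R18E15}) with the two structural results already proved: that $\tilde{H}^m_{\kappa,0}(I)$ is closed under the iterated operators $\bK^n$ (Corollary \ref{R11C1}) and that $\bK^n$ drives the oscillation order of a function in $H^m_\kappa(I)$ down by $n$ (Theorem \ref{R11T3}). Taking $n=m$ in (\ref{R18E15}) gives
\[
y=\bK^m y+\sum_{j=0}^{m-1}\bK^j f ,
\]
and the conclusion will follow once each of the two terms on the right is shown to lie in $\tilde{H}^m_{\kappa,0}(I)$, together with the fact (noted after Definition \ref{space-definition2}) that $\tilde{H}^m_{\kappa,0}(I)$ is a vector space.

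First I would record two easy preliminary observations. Since $K$ is independent of $\kappa$ and lies in $C^{[m]}(I^2)$, it automatically belongs to $C^{[m]}_{\kappa,0}(I^2)$, so the hypotheses of Corollary \ref{R11C1} and Theorem \ref{R11T3} are met. Next, an element of $\tilde{H}^m_{\kappa,0}(I)$ has the form $u_0+u_1e^{i\kappa\bcdot}+u_2e^{-i\kappa\bcdot}$ with $u_j\in H^m_{\kappa,0}(I)$, and the Leibniz rule shows that the $L^2$ norm of its $p$-th derivative is $\bO(\kappa^p)$; hence $\tilde{H}^m_{\kappa,0}(I)\subset H^m_\kappa(I)$. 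In particular the hypothesis $f\in\tilde{H}^m_{\kappa,0}(I)$ gives $f\in H^m_\kappa(I)$, so Theorem \ref{R11P2} applies and yields $y\in H^m_\kappa(I)$.

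Next I would treat the two terms of the representation separately. For the forcing part, $f\in\tilde{H}^m_{\kappa,0}(I)$ and Corollary \ref{R11C1} give $\bK^j f\in\tilde{H}^m_{\kappa,0}(I)$ for $1\le j\le m-1$ (the $j=0$ term being $f$ itself), so by the vector-space property $\sum_{j=0}^{m-1}\bK^j f\in\tilde{H}^m_{\kappa,0}(I)$. For the remaining term, since $y\in H^m_\kappa(I)$ and $m\in\bZ_{m+1}^+$, Theorem \ref{R11T3} with $n=m$ gives $\bK^m y\in\tilde{H}^m_{\kappa,m-m}(I)=\tilde{H}^m_{\kappa,0}(I)$. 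Adding the two contributions shows $y\in\tilde{H}^m_{\kappa,0}(I)$.

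Since all the analytic difficulty has been absorbed into the preceding lemmas and into Theorem \ref{R11T3}, I do not expect a genuine obstacle in this final step. The only points requiring care are the choice $n=m$ --- which is the largest iterate for which Theorem \ref{R11T3} is stated, and precisely the one that lowers the oscillation order all the way to $0$ --- and the elementary inclusion $\tilde{H}^m_{\kappa,0}(I)\subset H^m_\kappa(I)$ that licenses the use of Theorems \ref{R11P2} and \ref{R11T3}.
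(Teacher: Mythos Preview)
Your proposal is correct and follows essentially the same route as the paper: take $n=m$ in the successive-substitution identity (\ref{R18E15}), apply Corollary \ref{R11C1} to the finite sum $\sum_{j=0}^{m-1}\bK^jf$, and apply Theorem \ref{R11T3} with $n=m$ to $\bK^m y$ after invoking Theorem \ref{R11P2} to place $y$ in $H^m_\kappa(I)$. You even spell out the inclusion $\tilde{H}^m_{\kappa,0}(I)\subset H^m_\kappa(I)$ needed to feed $f$ into Theorem \ref{R11P2}, which the paper leaves implicit.
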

\begin{proof}
According to Theorem \ref{R11P2},
we have that $y\in H^m_\kappa(I)$. Since $K\in C^{[m]}(I^2)$ is
independent of $\kappa$  and $f\in \tilde{H}^m_{\kappa,0}(I)$, by
employing Corollary \ref{R11C1} and Theorem \ref{R11T3}, the result
of this theorem follows naturally from equation (\ref{R18E15}) with
$n=m$.
\end{proof}

Theorem \ref{R11T2} characterizes the oscillatory structure of the
solution and it serves as a base for developing efficient numerical
methods for solving the equation (\ref{R9E15}).

\section{Oscillation preserving Galerkin method}
\label{sec;6}

In this section we shall develop an oscillation preserving Galerkin
method (OPGM) for the numerical solution of equation (\ref{R9E15}).
Specifically, we shall introduce to the usual spline space certain
simple, basic oscillation functions which characterize the
oscillation of the solution of equation (\ref{R9E15}). We shall
establish the convergence theorem for the proposed OPGM.

We begin with describing the usual spline space which is used to
approximate the solution of the equation. Given a partition of $I$,
$\Delta: -1=t_0<t_1<\ldots<t_N<t_{N+1}=1,$ let
$h:=\max_{j\in\bZ_{N+2}^+}(t_{j}-t_{j-1})$ and
$I_j:=(t_{j-1},t_{j})$, $j\in\bZ_{N+2}^+$. We denote by $\Pi_m$ the
set of polynomials of degree less than $m$. Let
$S_h^{m,\nu}(\Delta):=\left\{w\in C^\nu(I):w|_{I_j}\in \Pi_m,
j\in\bZ_{N+2}^+\right\}$ where $-1\leq \nu\leq m-1$ (when $\nu=-1$,
the elements of this space are allowed to possess jump discontinuous
at their  knots). This space is called the space of spline functions
of degree $m-1$ with knots at $t_1, t_2,\ldots,t_N$ of multiplicity
$m-1-\nu$ and its dimension is given by $d:=m(N+1)-N(1+\nu)$. The
smoothest space of nondegenerate splines is the one with $\nu=m-2$
which has dimension $N+m$. Let $S_h^m:=S_h^{m,m-2}(\Delta)$. The
space $S_h^{m}$ has the B-spline basis $\{B_j: j\in\bZ_d\}$ of order
$m$ where $d=N+m$. Let $\bP_h'$ denote the orthogonal projection
from $L^2$ onto $S_h^m$. It is well-known that $\bP_h'$ has the
properties:

 (\romannumeral1)
For all $u\in L^2(I)$, $\|\bP_h'u-u\|_2\rightarrow 0$ as
$h\rightarrow 0$.

(\romannumeral2) There exist constants $c>0$ and $h_0>0$ such that
for all $0<h<h_0$, $\|\bP_h'u-u\|_2\leq ch^m\|u\|_{H^m}$, for all
$u\in H^m(I)$.

\noindent As indicated at the end of Section \ref{sec;2}, approximate
solutions of equation (\ref{R9E15}) from standard spline spaces
suffer from the oscillation of its exact solution. Theorem
\ref{R11T2} suggests that we should include in the usual spline
space additional simple, basic oscillation functions which
characterize the oscillation of the solution of the equation. This
gives the OPGM which we describe next.

According to Theorem \ref{R11T2}, we shall use an extended spline
space $V_{\kappa,h}^m$ based on space $S_h^m$ with the structure
$\{e^{i\kappa s}, e^{-i\kappa s}: s\in I\}$ as our approximate space
for the numerical solutions. Namely, we let
\begin{equation}\label{R11E19}
V_{\kappa,h}^m:=\{w_1e^{i\kappa \bcdot}+w_2e^{-i\kappa \bcdot}+w_3:
w_j\in S_h^m, j\in\bZ_4^+\}
\end{equation}
and call  $V_{\kappa,h}^m$ the structure space. Let $\bP_h$ denote
the orthogonal projection from $L^2$ onto $V_{\kappa,h}^m$. Since
this projection maps a function in $L^2$ into $V_{\kappa,h}^m$ with
an oscillatory structure, we call it an oscillation preserving
projection, and call the solution $y_h$ of the equation
\begin{equation}\label{R9E75}
y_h=\bP_h\bK y_h+\bP_hf,
\end{equation}
or equivalently
\begin{equation}\label{R9E79}
  (y_h,v_h)=(\bK y_h,v_h)+(f,v_h),\ \ \mbox{for all}\ \ v_h\in V_{\kappa,h}^m,
\end{equation}
an OPGM approximate solution in $V_{\kappa,h}^m$ for equation
(\ref{R9E15}). Since the solution $y$ of equation (\ref{R9E15}) and
$V_{\kappa,h}^m$ share the same oscillation structure, $y_h$ can
capture the main oscillation of the solution $y$. We expect that the
numerical solution $y_h$ approximates the $y$ in an optimal order.

In order to analyze $\bP_h$, we define two $\kappa$-oscillatory
spaces
 $W_\tau^m:=\{we^{i\tau \kappa \bcdot}: w\in S_h^m\},$ for $\tau=-1, 1$.
Correspondingly, we define two linear operators $\bP_h^\tau$ from
$L^2$ onto $W_\tau^m$ by
\begin{equation}\label{R11E14}
(\bP_h^\tau u)(s):=(\bP_h'w)(s)e^{\tau i\kappa s}, \;s\in I\; \text{where}\;w(s):=u(s)e^{-\tau i\kappa s},\; \text{for all}\; u\in L^2.
\end{equation}
Note that $((\bP_h^\tau)^2u) (s)=(\bP_h^\tau (\bP_h'w)e^{\tau
i\kappa\bcdot})(s)=(\bP_h'(\bP_h'w))(s)e^{\tau i\kappa s}=
(\bP_h'w)(s)e^{\tau i\kappa s}=(\bP_h^\tau u)(s), s\in I,$ for all
$u\in L^2(I)$. Operators $\bP_h^\tau$ are projection operators from
$L^2$ onto $W_\tau^m$, $\tau=\pm1$.

%

We recall one known result taken from \cite{ANSELONE1971}.
\begin{proposition}\label{R9P1}
Let $X$ be a Banach space. If $\bT, \bT_n: X\rightarrow X$ are
bounded linear operators with $\bT_n\rightarrow \bT$ pointwise as
$n\rightarrow\infty $, then $$\|(\bT_n-\bT)\bL\|\rightarrow 0,
n\rightarrow\infty $$ for each compact operator $\bL: X\rightarrow
X$.
\end{proposition}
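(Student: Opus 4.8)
The plan is to deduce the operator-norm convergence $\|(\bT_n-\bT)\bL\|\to 0$ from the pointwise convergence together with the compactness of $\bL$, by a finite $\epsilon$-net argument. First I would consider the error operators $\bT_n-\bT$; these are bounded and converge to $0$ pointwise on $X$, so the Banach--Steinhaus (uniform boundedness) theorem supplies a finite constant $M:=\sup_n\|\bT_n-\bT\|$. If $M=0$ the conclusion is immediate, so assume $M>0$.

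Next, fix $\epsilon>0$ and let $B$ be the closed unit ball of $X$. Since $\bL$ is compact, $\bL(B)$ is relatively compact in $X$, hence totally bounded, so there exist finitely many points $y_1,\dots,y_k\in X$ with $\bL(B)\subset\bigcup_{j=1}^k\{y\in X:\|y-y_j\|<\epsilon/(2M)\}$. For any $x\in B$ choose an index $j$ with $\|\bL x-y_j\|<\epsilon/(2M)$; then
\begin{align*}
\|(\bT_n-\bT)\bL x\| &\leq \|(\bT_n-\bT)(\bL x-y_j)\|+\|(\bT_n-\bT)y_j\| \\
&\leq M\cdot\frac{\epsilon}{2M}+\|(\bT_n-\bT)y_j\|=\frac{\epsilon}{2}+\|(\bT_n-\bT)y_j\|.
\end{align*}
Finally, because $(\bT_n-\bT)y_j\to 0$ for each of the finitely many indices $j\in\{1,\dots,k\}$, there is an $N$ such that $\|(\bT_n-\bT)y_j\|<\epsilon/2$ for all $n\geq N$ and all such $j$. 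Taking the supremum over $x\in B$ in the displayed estimate yields $\|(\bT_n-\bT)\bL\|\leq\epsilon$ for every $n\geq N$, which establishes the claim.

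The argument is essentially routine; the only step that requires a little care is the appeal to the uniform boundedness principle to produce the constant $M$, after which the proof reduces to the standard fact that a bounded, pointwise convergent sequence of operators converges uniformly on relatively compact sets.
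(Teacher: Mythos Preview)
Your proof is correct and is the standard argument for this classical result. Note that the paper does not actually prove this proposition: it simply quotes it as a known result from Anselone's monograph \cite{ANSELONE1971}, so there is no ``paper's own proof'' to compare against. Your approach---invoking Banach--Steinhaus to obtain the uniform bound $M$, then using the total boundedness of $\bL(B)$ to reduce to finitely many points where pointwise convergence suffices---is exactly the argument one finds in Anselone and in most functional analysis texts.
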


To show the error bound of OPGM, we also define for $u\in \tilde{H}^m_{\kappa,0}$,
\[
\|u\|_{\tilde{H}^m_{\kappa,0}}:=\inf\left\{\left(\sum_{j=1}^3 \|w_j\|_{H^m}^2\right)^{1/2}: \  u=w_1e^{i\kappa \cdot}+w_2 e^{-i\kappa\cdot}+w_3, w_j\in H^m_{\kappa,0}, j=1,2,3\right\}.
\]

We are now ready to prove the main theorem of this section which
gives the convergence of OPGM.

\begin{theorem}\label{R9T8}
Suppose that $K\in C^{[m]}(I^2)$ is independent of $\kappa$, $f\in
\tilde{H}^m_{\kappa,0}(I)$ and 1 is not the eigenvalue of $\bK$ for any $\kappa$. If $y$ is the solution of equation
(\ref{R9E15}), then for sufficiently large $\kappa$, there exist
positive constants $c$ independent of $\kappa$ and $h_0>0$ such that
for all $0<h< h_0$, the OPGM has a unique solution $y_h\in
V_{\kappa,h}^m$ that satisfies
\begin{equation}\label{R8E2}
\|y-y_h\|_2\leq ch^m\|y\|_{\tilde{H}^m_{\kappa,0}}.
\end{equation}
\end{theorem}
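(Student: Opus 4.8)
The plan is to follow the classical Galerkin-convergence argument for second-kind equations (as in \cite{SLOAN1991,ANSELONE1971}), but carried out in the structured space $V_{\kappa,h}^m$ rather than in $S_h^m$, so that the oscillatory factors $e^{\pm i\kappa\cdot}$ in the solution are absorbed by the approximation space and do not enter the error bound. First I would record the approximation property of $\bP_h$ on $\tilde H^m_{\kappa,0}(I)$: given $u = w_1 e^{i\kappa\cdot}+w_2 e^{-i\kappa\cdot}+w_3$ with $w_j\in H^m_{\kappa,0}$, the function $\bigl(\bP_h^\tau\bigr)$-type approximation applied termwise gives an element of $V_{\kappa,h}^m$ whose distance from $u$ is, by property (ii) of $\bP_h'$ together with $|e^{\pm i\kappa s}|=1$, at most $c h^m\bigl(\sum_j\|w_j\|_{H^m}^2\bigr)^{1/2}$; taking the infimum over representations yields
\begin{equation}\label{approxPh}
\|\bP_h u - u\|_2 \le c h^m \|u\|_{\tilde H^m_{\kappa,0}},\qquad u\in\tilde H^m_{\kappa,0}(I),
\end{equation}
with $c$ independent of $\kappa$, and in particular $\|\bP_h u-u\|_2\to 0$ as $h\to0$ for every $u\in L^2(I)$ (since $V_{\kappa,h}^m\supset S_h^m$ and $\bP_h$ is the orthogonal projection). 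This is the only place the structured space is essential.

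Next I would establish invertibility of $\bI-\bP_h\bK$ on $L^2(I)$ for $h$ small, uniformly in $\kappa$ large. By Corollary \ref{R11C2}, $(\bI-\bK)^{-1}$ exists and is bounded by a constant $c$ independent of $\kappa$ for $\kappa\ge\kappa_0$. Writing $\bI-\bP_h\bK = (\bI-\bK) + (\bI-\bP_h)\bK = (\bI-\bK)\bigl[\bI + (\bI-\bK)^{-1}(\bI-\bP_h)\bK\bigr]$, it suffices to show $\|(\bI-\bP_h)\bK\|\to 0$ as $h\to0$. Since $\bK$ is compact on $L^2(I)$ and $\bP_h\to\bI$ pointwise, Proposition \ref{R9P1} (with $\bT_n=\bP_h$, $\bT=\bI$, $\bL=\bK$) gives $\|(\bI-\bP_h)\bK\|\to0$. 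The delicate point is uniformity in $\kappa$: $\bK$ depends on $\kappa$, so I cannot apply Proposition \ref{R9P1} to a single fixed operator. I would instead argue that $\bK$, as a $\kappa$-parameterized family, maps $L^2(I)$ into $\tilde H^m_{\kappa,0}(I)$ with norm bounded independently of $\kappa$ — this follows from Corollary \ref{R11C1} together with the fact (extractable from Lemma \ref{R10L2} and \eqref{R11E15}) that $\|\bK u\|_{\tilde H^m_{\kappa,0}}\le c\|u\|_2$ with $c$ independent of $\kappa$ — and then use \eqref{approxPh} to get $\|(\bI-\bP_h)\bK u\|_2 \le c h^m\|\bK u\|_{\tilde H^m_{\kappa,0}} \le c h^m\|u\|_2$, so $\|(\bI-\bP_h)\bK\|\le c h^m$ uniformly in $\kappa\ge\kappa_0$. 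Choosing $h_0$ with $c h_0^m \cdot \sup_{\kappa\ge\kappa_0}\|(\bI-\bK)^{-1}\| < 1$ makes the Neumann series converge, so $(\bI-\bP_h\bK)^{-1}$ exists and is uniformly bounded for all $0<h<h_0$ and $\kappa\ge\kappa_0$. This guarantees existence and uniqueness of $y_h$.

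Finally I would derive the error estimate by the standard Galerkin manipulation. Applying $\bP_h$ to $(\bI-\bK)y=f$ gives $\bP_h y - \bP_h\bK y = \bP_h f$, and subtracting \eqref{R9E75} yields $(\bI-\bP_h\bK)(y_h - \bP_h y) = \bP_h\bK(\bP_h y - y)$, hence
\begin{equation*}
y_h - y = (y_h - \bP_h y) + (\bP_h y - y) = (\bI-\bP_h\bK)^{-1}\bP_h\bK(\bP_h y - y) + (\bP_h y - y).
\end{equation*}
Taking norms and using the uniform bound on $(\bI-\bP_h\bK)^{-1}$, the boundedness of $\bP_h\bK$, and \eqref{approxPh} applied to $y$ (which lies in $\tilde H^m_{\kappa,0}(I)$ by Theorem \ref{R11T2}), I obtain
\begin{equation*}
\|y - y_h\|_2 \le \bigl(1 + \|(\bI-\bP_h\bK)^{-1}\|\,\|\bP_h\bK\|\bigr)\|\bP_h y - y\|_2 \le c h^m \|y\|_{\tilde H^m_{\kappa,0}},
\end{equation*}
with $c$ independent of $\kappa$, which is \eqref{R8E2}. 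The main obstacle is the uniformity-in-$\kappa$ step in the previous paragraph: one must be careful that the approximation constant in \eqref{approxPh}, the $\kappa$-boundedness of $\bK$ as a map into $\tilde H^m_{\kappa,0}$, and the bound from Corollary \ref{R11C2} are all genuinely $\kappa$-independent, so that a single $h_0$ works for all large $\kappa$; everything else is the textbook projection-method argument.
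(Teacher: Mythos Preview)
Your overall architecture---prove an approximation estimate for $\bP_h$ on $\tilde H^m_{\kappa,0}$, establish invertibility of $\bI-\bP_h\bK$ via a Neumann-series perturbation of $\bI-\bK$, then combine with the error identity and Theorem~\ref{R11T2}---is exactly the paper's approach. Your error identity is algebraically equivalent to the paper's simpler form $y-y_h=(\bI-\bP_h\bK)^{-1}(y-\bP_hy)$, and your termwise approximation argument via $\bP_h^\tau$ matches the paper's use of \eqref{R18E19}.

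There is, however, a genuine gap in your ``delicate point''. You claim that $\bK$ maps $L^2(I)$ into $\tilde H^m_{\kappa,0}(I)$ with $\|\bK u\|_{\tilde H^m_{\kappa,0}}\le c\|u\|_2$, citing Corollary~\ref{R11C1} and Lemma~\ref{R10L2}. Those results only show that $\tilde H^m_{\kappa,0}(I)$ is \emph{closed} under $\bK$; they say nothing about inputs merely in $L^2$. In fact the claim is false: writing $\bK u=w_1e^{i\kappa\cdot}+w_2e^{-i\kappa\cdot}$ with $w_1(s)=\int_{-1}^sK(s,t)e^{-i\kappa t}u(t)\,dt$, one differentiation already produces the boundary term $K(s,s)e^{-i\kappa s}u(s)$, so for generic $u\in L^2$ one only gets $w_1\in H^1$, not $H^m$. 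Hence your route to a $\kappa$-uniform bound $\|(\bI-\bP_h)\bK\|\le ch^m$ does not go through as written.

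This gap sits in a step the paper does \emph{not} attempt. The paper simply applies Proposition~\ref{R9P1} at fixed $\kappa$ to get $\|\bP_h\bK-\bK\|\to0$, obtains some $h_0$ (which is allowed to depend on $\kappa$), and then invokes Corollary~\ref{R11C2} only to make the constant $c$ in the final bound $\kappa$-independent. The theorem as stated and proved does not assert that a single $h_0$ works for all large $\kappa$. So your Anselone argument in the sentence before the ``delicate point'' already suffices; the subsequent uniformity refinement is both unnecessary for the stated result and incorrectly justified. If you do want a $\kappa$-uniform $h_0$, the salvageable version is that $\bK:L^2\to\tilde H^1_{\kappa,0}$ boundedly (one derivative only), giving $\|(\bI-\bP_h)\bK\|\le ch$ uniformly, which is still enough for the Neumann series---but that requires an $H^1$ approximation property for $\bP_h'$ rather than the $H^m$ property~(ii).
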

\begin{proof}
From (\ref{R9E16}) and (\ref{R9E75}), we obtain that
\begin{equation}\label{R18E37}
\left(\bI-\bP_h \bK \right)(y_h-y)=\bP_hy-y.
\end{equation}
Since $\bP_h$ is the orthogonal projection from $L^2$ onto
$V_{\kappa,h}^m$, we have that $\|\bP_h\|=1$. By property (\romannumeral1) of $\bP_h'$ and the inequality
\[
\|\bP_hu-u\|_2\leq (1+\|\bP_h\|)\|\bP_h'u-u\|_2,
\]
we observe for $u\in
L^2(I)$ that $\|\bP_hu-u\|_2\rightarrow 0$, as $h\rightarrow 0$.
Hence, by employing Proposition \ref{R9P1} together with the
compactness of $\bK$, we have that $\|\bP_h\bK-\bK\|\rightarrow 0$,
as $h\rightarrow 0$. The existence of $(\bI-\bK)^{-1}$ ensures that
there exists an $h_0>0$ such that for all $0<h<h_0$,
\begin{equation}\label{R18E17}
\|(\bI-\bK)^{-1}(\bP_h\bK-\bK)\|<\frac{1}{2}.
\end{equation}
Noting that
$$
\bI-\bP_h\bK=(\bI-\bK)\left[\bI+(\bI-\bK)^{-1}(\bK-\bP_h\bK)\right],
$$
we obtain by inequality (\ref{R18E17}) that
\begin{equation}\label{R9E76} \|(\bI-\bP_h\bK)^{-1}\|\leq
\frac{\|(\bI-\bK)^{-1}\|}{1-\|(\bI-\bK)^{-1}(\bP_h\bK-\bK)\|}<2\|(\bI-\bK)^{-1}\|.
\end{equation}
This ensures the existence of $(I-\bP_h\bK)^{-1}$. Thus, we have
from (\ref{R18E37}) that
  \begin{equation}\label{R9E77}
    y-y_h=(\bI-\bP_h\bK)^{-1}(y-\bP_hy).
  \end{equation}

It remains to estimate  $\|y-y_h\|_2$. Theorem \ref{R11T2} ensures
that $y\in \tilde{H}^m_{\kappa,0}(I)$.
 This means that for sufficiently large $\kappa$ there
exist $w_j\in H^m_{\kappa,0}(I), j\in\bZ_4^+$ such that
\begin{equation}\label{R18E14}
y(z)=w_1(s)e^{i\kappa s}+w_2(s)e^{-i\kappa s}+w_3(s),\ \ s\in I.
\end{equation}
Combining (\ref{R9E76}), (\ref{R9E77}) and (\ref{R18E14}) together, we derive that
  \begin{equation}\label{R9E78}
    \|y-y_h\|_2\leq 2\|(\bI-\bK)^{-1}\| (\|w_1e^{i\kappa \bcdot}-\bP_h(w_1e^{i\kappa \bcdot})\|_2+\|w_2e^{-i\kappa \bcdot}-\bP_h(w_2e^{-i\kappa \bcdot})\|_2+\|w_3-\bP_hw_3\|_2).
  \end{equation}
According to Corollary \ref{R11C2}, for sufficiently large $\kappa$ there exists  a positive constant $c$ independent of $\kappa$ such that $\|(\bI-\bK)^{-1}\|\leq c$. It suffices to consider the three terms in the parenthesis in (\ref{R9E78}).
By definition (\ref{R11E14}), we obtain that
  \begin{equation}\label{R18E19}
  \begin{split}
    \|w_je^{i\kappa \bcdot}-\bP_h(w_1e^{i \tau_j\kappa \bcdot})\|_2\leq 2\|w_je^{i\kappa \bcdot}-\bP_h^{\tau_j}(w_je^{i \tau_j\kappa \bcdot})\|_2 = 2\|w_j-\bP_h'w_j\|_2,
  \end{split}
  \end{equation}
where $\tau_j=j-2, j\in \bZ_{4}^+$ and $\bP_h^0:=\bP_h'$. The
desired result (\ref{R8E2}) is obtained by substituting
(\ref{R18E19}) into (\ref{R9E78}) and using property
(\romannumeral2) of $\bP_h'$.
\end{proof}

Noting that for $y\in \tilde{H}^m_{\kappa,0}(I)$,
$\|y\|_{\tilde{H}^m_{\kappa,0}}$ is bounded by a positive constant
independent of $\kappa$, solving the oscillatory equation
(\ref{R9E15}) by OPGM gives the {\it optimal} order of convergence
which equals the order of the spline functions used in the
approximation.

\section{Stability Analysis}
\label{sec;7}

In the section, we investigate the stability of OPGM based on the
B-splines, in terms of the condition number of the coefficient
matrix of the linear system that results from the method. We shall
establish that when $\kappa$ is large enough, the condition number
of the discrete system is bounded.

We first describe the linear system for OPGM based on the B-splines.
Suppose that space $S_h^m$ is defined based on a uniform partition
and the B-spline basis $\{B_j: j\in\bZ_d\}$ of order $m$ is chosen
for space $S_h^m$. By the definition (\ref{R11E19}) of
$V_{\kappa,h}^m$, we have that ${\cal B}:=\{B_j e^{i\kappa\bcdot}$,
$ B_je^{-i\kappa\bcdot},B_j: j\in\bZ_d\}$ is a basis for the
structure space $V_{\kappa,h}^m$. The OPGM approximate solution of
the equation can then be represented by
$$
y_h(z)=\sum_{j=1}^da_{1,j}B_j(s)e^{i\kappa
s}+\sum_{j=1}^da_{2,j}B_j(s)e^{-i\kappa
s}+\sum_{j=1}^da_{3,j}B_j(s), \ \ s\in I,
$$
where coefficient vector $\Ba:=[a_{l,j}: l\in\bZ_4^+,j\in\bZ_d]^T$,
is determined by the linear system obtained by substituting the
above expression of $y_h$ in (\ref{R9E79}) and replacing $v_h$ by
each basis function in ${\cal B}$. To set up the linear system, we
let $\epsilon_p:=p-1$, for $p\in\bZ_3$, and for $p\in \bZ_3$, we
define the vectors $\Bf_{p}:= \left[\left(f, B_j
e^{i\epsilon_p\kappa \bcdot}\right): j\in \bZ_d\right]^T$ and use
them as blocks to form the vector  $\Bf:=\left[\Bf_{p}:
p\in\bZ_3\right]$. For $p,q\in \bZ_3$, we introduce matrices
\[
\BE_{p,q}:=\left[\left(B_l e^{i\epsilon_p\kappa \bcdot}, B_j
e^{i\epsilon_q\kappa \bcdot}\right): l,j\in\bZ_d\right],\;\BK_{p,q}
:=\left[\left(\bK\left(B_l e^{i\epsilon_p\kappa \bcdot}\right),
B_je^{i\epsilon_q\kappa \bcdot}\right): l,j\in\bZ_d\right]
\]
and define $\BE_d:=\left[\BE_{p,q}: p,q\in\bZ_3\right]$ and
$\BK_d:=\left[\BK_{p,q}: p,q\in\bZ_3\right].$ Thus, the discrete
linear system (\ref{R9E79}) of OPGM can be rewritten in the matrix
form
$$
(\BE_d-\BK_d)\Ba=\Bf_d.
$$

We next study the condition number of matrix $(\BE_d-\BK_d)$. For
this purpose, we recall two well-known facts.

The first fact concerns the condition number of the coefficient
matrix of a general Galerkin method, which was presented in
\cite{ATKINSON1997} (pp. 94-97). Suppose that the operator equation
$(\lambda-\bK)y=f$ is solved by the Galerkin method with the basis
$\{\phi_0,\ldots,\phi_{d-1}\}$ using the orthogonal projection
operator $\bP_n$. The resulting approximate operator equation is
given by $(\lambda-\bP_n\bK)y=\bP_nf$. We use $\BA_n$ to denote its
coefficient matrix. Let $\Gamma_n:=[(\phi_l,\phi_j): l,j\in\bZ_d]$
denote the Gram matrix of the basis. Then the condition number of
the coefficient matrix  $\BA_n$ satisfies the bound
\begin{equation}\label{R18E16}
{\rm Cond}(\BA_n) \leq {\rm Cond}(\Gamma_n) {\rm
Cond}(\lambda-\bP_n\bK),
\end{equation}
where Cond$(\BA)$ denotes the condition number of the matrix $\BA$.

The second fact regards the condition number of the B-spline basis
of order $k$ \cite{BOOR1973, BOOR19762}. We denote by $\Bt:=\{t_j:
j\in\bZ_{d+k}\}$ the equal-space sequence of knots with the first
$k$ and last $k$ knots in $\Bt$ being the same and $t_0=-1$,
$t_{d+k-1}=1$. Let ${\cal N}:=\{N_j: j\in \bZ_d\}$ denote the
B-spline basis of order $k$ on $\Bt$, that is
\begin{equation}\label{R8E5}
N_j(t):=(t_{j+k}-t_j)[t_{j},\ldots,t_{j+k}](\bcdot-t)_+^{k-1},\ \ j\in\bZ_d, t\in I,
\end{equation}
where $[t_j,\ldots,t_{j+k}]u$ denotes the $k$--th order divided
difference of $u$ at the nodes $t_j,\ldots,t_{j+k}$ and
$\alpha_+:=\max\{\alpha,0\}$. The $L^2$ condition number of the
basis $\{N_j\}$ is defined as
\[
{\rm Cond}_{\it k}:=\sup_{\Bt}\left\{\sup_{\Bb\neq 0}\frac{\|\Bb\|_{l^2}}{\|\sum_{j\in\bZ_d} b_jN_j\|_2}
\sup_{\Bb\neq0}\frac{\|\sum_{j\in \bZ_d} b_jN_j\|_2}{\|\Bb\|_{l^2}}\right\}.
\]
It is proved that there exists a positive constant $D_k$ which only depends on $k$ such that,
\begin{equation}\label{R18E8}
 {\rm Cond}_k<D_k.
\end{equation}
We denote by $\BG:=[(N_l,N_j): l,j\in \bZ_d]$ the Gram matrix of the
B-spline basis. The following lemma shows that the condition number
of $\BG$ is bounded by a constant.

\begin{lemma}\label{R11L13}
If ${\cal N}$ is the B-spline basis of order $k$ defined by
(\ref{R8E5}), then Cond$(\BG)$ is bounded by a constant independent
of the order of the matrix.
\end{lemma}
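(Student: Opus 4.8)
The plan is to relate the $\ell^2$ condition number of the Gram matrix $\BG$ directly to the $L^2$ condition number ${\rm Cond}_k$ of the B-spline basis and then invoke the uniform bound (\ref{R18E8}). First I would note that each $N_j$ is real-valued, so $\BG$ is a real symmetric positive definite matrix, and that for any coefficient vector $\Bb=[b_j:j\in\bZ_d]$,
\[
\Bb^{*}\BG\Bb=\sum_{l,j\in\bZ_d}\overline{b_l}\,b_j\,(N_l,N_j)=\left\|\sum_{j\in\bZ_d}b_jN_j\right\|_2^{2}.
\]
Dividing by $\|\Bb\|_{l^2}^{2}$ and taking the supremum, respectively the infimum, over $\Bb\neq0$ identifies the extremal eigenvalues of $\BG$ as
\[
\lambda_{\max}(\BG)=\left(\sup_{\Bb\neq0}\frac{\|\sum_{j\in\bZ_d}b_jN_j\|_2}{\|\Bb\|_{l^2}}\right)^{2},\qquad
\lambda_{\min}(\BG)=\left(\sup_{\Bb\neq0}\frac{\|\Bb\|_{l^2}}{\|\sum_{j\in\bZ_d}b_jN_j\|_2}\right)^{-2}.
\]

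Next I would form the quotient. Since $\BG$ is symmetric positive definite, ${\rm Cond}(\BG)=\lambda_{\max}(\BG)/\lambda_{\min}(\BG)$, and the two displays above give
\[
{\rm Cond}(\BG)=\left(\sup_{\Bb\neq0}\frac{\|\sum_{j\in\bZ_d}b_jN_j\|_2}{\|\Bb\|_{l^2}}\ \sup_{\Bb\neq0}\frac{\|\Bb\|_{l^2}}{\|\sum_{j\in\bZ_d}b_jN_j\|_2}\right)^{2}.
\]
The quantity inside the parentheses is precisely the expression appearing in the braces in the definition of ${\rm Cond}_k$, evaluated at the particular knot sequence $\Bt$ underlying the basis ${\cal N}$; hence it is bounded above by ${\rm Cond}_k$. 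Combining this with (\ref{R18E8}) yields ${\rm Cond}(\BG)\le {\rm Cond}_k^{2}<D_k^{2}$, and since $D_k$ depends only on the order $k$ and not on $d$ (the order of the matrix $\BG$), the claim follows.

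The argument is essentially bookkeeping, so I do not expect a genuine obstacle; the only point needing a little care is the passage from complex coefficient vectors to the extremal eigenvalues. Because $N_j$ is real, $\sum_l\overline{b_l}N_l=\overline{\sum_l b_lN_l}$, so $\Bb^{*}\BG\Bb=\|\sum_j b_jN_j\|_2^{2}\ge0$ even for complex $\Bb$, which legitimizes the Rayleigh-quotient characterization of $\lambda_{\max}(\BG)$ and $\lambda_{\min}(\BG)$ and shows that restricting to real $\Bb$ does not change the extremal values. One should also observe that $\lambda_{\min}(\BG)>0$, i.e.\ the positive definiteness of $\BG$, is already implied by the linear independence of $\{N_j\}$, which is in turn implicit in the finiteness of the first supremum in the definition of ${\rm Cond}_k$.
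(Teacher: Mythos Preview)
Your proof is correct and follows essentially the same route as the paper: both identify $\lambda_{\max}(\BG)$ and $\lambda_{\min}(\BG)$ via the Rayleigh quotient $\Bb^{*}\BG\Bb/\|\Bb\|_{l^2}^{2}=\|\sum_j b_jN_j\|_2^{2}/\|\Bb\|_{l^2}^{2}$, obtain ${\rm Cond}(\BG)\le{\rm Cond}_k^{2}$, and invoke (\ref{R18E8}). Your additional remarks about complex versus real coefficient vectors and about positive definiteness are a bit more careful than the paper's version but do not change the argument.
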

\begin{proof}
We shall connect Cond $(\BG)$ with  ${\rm Cond}_k$ and make use of
estimate (\ref{R18E8}). Since matrix $\BG$ is symmetric, we have
that
\begin{equation}\label{R8E6}
  {\rm Cond}(\BG)=\lambda_{\max}(\BG)/\lambda_{\min}(\BG).
\end{equation}
We next associate ${\rm Cond}(\BG)$ with the condition number of the
basis ${\cal N}$ through the maximum and minimum eigenvalues. For
this purpose, we derive an alternative expression for the ${\rm
Cond}_k$. Recall that $\left\|\sum_{j\in\bZ_d}
b_jN_j\right\|_2^2=\Bb^T\BG \Bb$. It is straightforward to have that
\[
\sup_{\Bb\neq 0}\frac{\Bb^T\BG \Bb}{\|\Bb\|_{l^2}^2}=\sup_{\Bb\neq 0} \sum_{j\in\bZ_d} \lambda_j \frac{c_j^2}{\sum_{l\in\bZ_d}c_l^2} =\lambda_{\max}(\BG)
\]
and
\[
\sup_{\Bb\neq 0}\frac{\|\Bb\|_{l^2}^2}{\Bb^T\BG \Bb}=1/\inf_{\Bb\neq 0}\frac{\Bb^T\BG \Bb}{\|\Bb\|_{l^2}^2}=1/\inf_{\Bb\neq 0} \sum_{j\in\bZ_d} \lambda_j \frac{c_j^2}{\sum_{l\in\bZ_d}c_l^2} =1/\lambda_{\min}(\BG).
\]
These formulas together with the definition of ${\rm Cond}_k$, we
get that
\begin{equation}\label{R18E9}
{\rm Cond}_k=\sup_{\Bt}\sqrt{\lambda_{\max}(\BG)/\lambda_{\min}(\BG)}.
\end{equation}
It follows from (\ref{R8E6}) and (\ref{R18E9}) that
\begin{equation}\label{R8E9}
{\rm Cond}(\BG)\leq {\rm Cond}_k^2.
\end{equation}
We thus obtain the desired result from (\ref{R18E8}) and
(\ref{R8E9}).
\end{proof}

We now present the theorem regarding the condition number of the
matrix $\BE_d-\BK_d$.

\begin{theorem}\label{R11T4}
Let $K\in C^{[m]}(I^2)$ be independent of $\kappa$, $f\in
\tilde{H}^m_{\kappa,0}(I)$ and $y$ be the solution of equation
(\ref{R9E15}). If the equation (\ref{R9E15}) is solved numerically
by OPGM based on the B-spline basis with the uniform partition, then
there exist constants $c$ and $h_0$ such that for all $0<h<h_0$
there exists a constant $\kappa_0$ such that for all
$\kappa\geq\kappa_0$, Cond$(\BE_d-\BK_d)\leq c$.
\end{theorem}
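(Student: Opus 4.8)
The plan is to reduce the claim, via the standard bound (\ref{R18E16}) on the condition number of a Galerkin coefficient matrix, to two pieces: the condition number of the Gram matrix of the basis and the condition number of the perturbed operator $\bI-\bP_h\bK$. Indeed, OPGM is exactly the Galerkin method (\ref{R9E79}) with orthogonal projection $\bP_h$ onto $V_{\kappa,h}^m$ and basis ${\cal B}=\{B_je^{i\epsilon_p\kappa\bcdot}:p\in\bZ_3,\,j\in\bZ_d\}$; its Gram matrix is $\BE_d$ and its coefficient matrix is $\BE_d-\BK_d$. Hence (\ref{R18E16}) gives
\[
{\rm Cond}(\BE_d-\BK_d)\le {\rm Cond}(\BE_d)\,{\rm Cond}(\bI-\bP_h\bK),
\]
so it suffices to bound each factor on the right by a constant once $h$ is small and $\kappa$ is large.

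For the factor ${\rm Cond}(\bI-\bP_h\bK)$ I would argue as in the proof of Theorem \ref{R9T8}. On the one hand $\|\bI-\bP_h\bK\|\le 1+\|\bP_h\|\,\|\bK\|=1+\|\bK\|$, and by (\ref{R11E15}) with $L_1=L_2=K$ we have $\|\bK\|\le\sqrt2\|K\|_2$, a bound independent of $\kappa$. On the other hand, the proof of Theorem \ref{R9T8} shows $\|(\bI-\bP_h\bK)^{-1}\|<2\|(\bI-\bK)^{-1}\|$ for $0<h<h_0$, while Corollary \ref{R11C2} bounds $\|(\bI-\bK)^{-1}\|$ by a constant for all $\kappa\ge\kappa_0$. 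Thus ${\rm Cond}(\bI-\bP_h\bK)$ is bounded by a constant independent of $h$ and $\kappa$.

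The core of the proof is the bound on ${\rm Cond}(\BE_d)$. Since the B-splines are real-valued and $\overline{e^{i\epsilon_p\kappa t}}\,e^{i\epsilon_p\kappa t}\equiv1$, every diagonal block $\BE_{p,p}$ coincides with the B-spline Gram matrix $\BG$, so $\BE_d=\mathrm{diag}(\BG,\BG,\BG)+E$, where the blocks of $E$ are the $\BE_{p,q}$ with $p\ne q$, whose entries are the oscillatory integrals $\int_I B_l(t)B_j(t)e^{i(\epsilon_p-\epsilon_q)\kappa t}\,dt$ with $|\epsilon_p-\epsilon_q|\in\{1,2\}$. Integrating by parts once and using $\|B_j\|_\infty\le1$, $\|B_j'\|_\infty\le c/h$ and $|\mathrm{supp}\,B_j|\le mh$, one sees that every such entry is $O(1/\kappa)$ with a constant independent of $h$ (the powers of $h$ cancel); since each block $\BE_{p,q}$ is banded with bandwidth depending only on $m$, it follows that $\|E\|=O(1/\kappa)$ uniformly in $h$. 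The matrix $\mathrm{diag}(\BG,\BG,\BG)$ is Hermitian positive definite with the same spectrum as $\BG$, so, fixing $h<h_0$, the number $\lambda_{\min}(\BG)$ is a fixed positive quantity, and for $\kappa$ large enough (the threshold depending on $h$) we have $\|E\|<\tfrac12\lambda_{\min}(\BG)$. Weyl's inequality then yields $\lambda_{\min}(\BE_d)\ge\tfrac12\lambda_{\min}(\BG)$ and $\lambda_{\max}(\BE_d)\le 2\lambda_{\max}(\BG)$, whence ${\rm Cond}(\BE_d)\le 4\,{\rm Cond}(\BG)$, which by Lemma \ref{R11L13} is bounded by a constant depending only on $m$. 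Combining this with the previous paragraph proves ${\rm Cond}(\BE_d-\BK_d)\le c$.

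The main obstacle is the third step: showing that the off-diagonal oscillatory blocks of $\BE_d$ are negligible in operator norm with a constant free of $h$, which is the integration-by-parts estimate on products of B-splines together with their banded overlap structure, and then carrying out the spectral perturbation. This is also where the quantifier order of the statement enters: since $\lambda_{\min}(\BG)\sim h$ while $\|E\|\sim1/\kappa$, the threshold $\kappa_0$ must be allowed to grow like $1/h$ as $h\to0$, and $h_0$ is the one already controlling $\|(\bI-\bP_h\bK)^{-1}\|$ in Theorem \ref{R9T8}. A minor point to verify is that this $h_0$ can be chosen uniformly in $\kappa$, which is inherited from the estimates behind Theorem \ref{R9T8}.
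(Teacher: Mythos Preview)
Your proof is correct and follows essentially the same route as the paper: reduce via (\ref{R18E16}) to bounding ${\rm Cond}(\bI-\bP_h\bK)$ (handled by Corollary~\ref{R11C2} and the argument of Theorem~\ref{R9T8}) and ${\rm Cond}(\BE_d)$ (handled by showing the off-diagonal blocks vanish as $\kappa\to\infty$ and invoking Lemma~\ref{R11L13} for the diagonal). Your execution is slightly more quantitative than the paper's---you use integration by parts and bandedness to get an explicit $\|E\|=O(1/\kappa)$ uniform in $h$, followed by Weyl's inequality, whereas the paper simply invokes Riemann--Lebesgue and continuity of the condition number for fixed $h$---but this is a refinement of the same argument, not a different one, and your observation that the threshold $\kappa_0\sim 1/h$ is exactly what the paper encodes in its $\kappa_h$.
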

\begin{proof}
According to the bound (\ref{R18E16}) of the condition number of
discrete system of general Galerkin methods, we have that
$$
{\rm Cond}(\BE_d-\BK_d) \leq {\rm Cond}(\BE_{\it d}){\rm
Cond}(\bI-\bP_h\bK).
$$
By Corollary \ref{R11C2}, there exist
constants $c_1$ and $\kappa_1$ such that for all
$\kappa\geq\kappa_1$, ${\rm Cond}(\bI-\bK)< c_1$. Since
$\bI-\bP_h\bK\rightarrow \bI-\bK$, as $h\rightarrow 0$, there exists
a constant  $h_0$  such that for all $0<h<h_0$ and $\kappa\geq
\kappa_1$, ${\rm Cond}(\bI-\bP_h\bK)<c_1$.

Next, we bound the condition number of $\BE_d$ for all $h<h_0$. We
write $\BE_{0,0}$ as $\BE$ and observe that $\BE_{p,p}= \BE$, for
$p\in\bZ_3.$ When $p,q\in\bZ_3$, $p\neq q$, by the Riemann-Lebesgue
Lemma, the element in $\BE_{p,q}$ tends to 0 as $\kappa\rightarrow
\infty$ and thus, we get that
\begin{equation}\label{R18E4}
 \lim_{\kappa\rightarrow\infty}\BE_d={\rm Diag}(\BE,\BE,\BE).
\end{equation}
We notice that $\BE$ is the Gram matrix of the B-spline basis of
order $m$ which is independent of $\kappa$. Lemma \ref{R11L13}
ensures that there exists a positive constant $c_2$ independent of
$h$ and $\kappa$ such that
\begin{equation}\label{R18E6}
{\rm Cond}(\BE)< c_2.
\end{equation}
Due to the fact that ${\rm Diag}(\BE,\BE,\BE)$ and $\BE$ are
positive-definite matrices, we obtain that
\begin{equation}\label{R18E5}
{\rm Cond}\left({\rm
Diag}(\BE,\BE,\BE)\right)=\frac{\lambda_{\max}\left({\rm
Diag}(\BE,\BE,\BE)\right)} { \lambda_{\min}\left({\rm
Diag}(\BE,\BE,\BE)\right)  }= \frac{\lambda_{\max}(\BE)}
{\lambda_{\min}(\BE)}={\rm Cond}(\BE).
\end{equation}
Combining (\ref{R18E4}), (\ref{R18E6}) and (\ref{R18E5}), we have
that there exists a constant $\kappa_h$ such that for all
$\kappa>\kappa_h$, ${\rm Cond}(\BE_d)\leq c_2.$ We note here that
the value of $\kappa_h$ depends on $h$. By setting $c:=c_1 c_2 $ and
$\kappa_0:=\max\{\kappa_1,\kappa_h\}$, for all $\kappa>\kappa_0$,
Cond$(\BE_d-\BK_d)$ is bounded by $c$.
\end{proof}

Theorem \ref{R11T4} ensures that OPGM based on the B-spline basis is
numerically stable when the wavenumber is large enough.

\section{Numerical Experiments}
\label{sec;8}

In this section, we present a numerical example to demonstrate the
approximation accuracy, the order convergence and the stability of
the proposed method. We also compare it with the conventional
Galerkin method (CGM).

In this example, for simplicity we choose $K(s,t)=1$. For the
numerical comparison purpose, we choose $f$ as
\[
\begin{split}f(s)=&\left(-\frac{1}{4}s^4+(1-\frac{i}{2\kappa})s^3+\frac{3}{4\kappa^2}s^2
+\frac{3i}{4\kappa^3}s-\frac{3}{8\kappa^4}+\frac{i}{k}e^{i\kappa } +\frac{1}{4}\right) e^{i\kappa s} \\
&\;-\left[\frac{e^{2i\kappa }}{8\kappa^4}(-3+6i\kappa+6\kappa^2-4i\kappa^3)-\frac{i}{\kappa}e^{i\kappa}\right] e^{-i\kappa s}+1-\frac{2i}{\kappa}, \ \ s\in I
\end{split}
\]
so that the exact solution of equation (\ref{R9E15}) is given by
$y(s):=1+s^3e^{i\kappa s}$, $s\in I.$ It is straightforward to
obtain the $L^2$ norm of $y$ which is $\|y\|_2=\frac{4\sqrt{7}}{7}$
for any $\kappa$.

We choose the uniform partition $\Delta:=\{t_j:=-1+jh:
j\in\bZ_{N+2}\}$ for interval $I$, where $h:=\frac{2}{N+1}$, with
$t_{-1}=t_0$ and $t_{N+2}=t_{N+1}$. We choose $m=2$ and the B-spline
basis $\{B_{j,2}: j\in\bZ_{N+2}\}$ of order 2 as a basis for
$S_h^2$. That is, $B_{j,2}$ is a piecewise linear function whose
support is $[t_{j-1}, t_{j+1}]$ and $B_{j,2}(t_j)=1$, for
$j\in\bZ_{N+2}$. The CGM will use the approximation space
$S_h^2$ while the OPGM will use the non-$\kappa$-oscillatory
structured space $V_{\kappa,h}^2$ based on $S_h^2$. For a fair
comparison, we shall choose different $N$ values for the two
methods. We use $\tilde{N}$ to represent the order of the
coefficient matrices. We have that $\tilde{N}=3(N+2)$ for the OPGM
and $\tilde{N}=N+2$ for the CGM. To make
the dimension of discrete matrices comparable, the values of $N$ for
the CGM are set four times of those of $N$
for the OPGM. From Theorem \ref{R9T8}, we have that
$\|y-y_h\|_2=\bO(h^2)$ where $y$ is  the exact solution of equation
(\ref{R9E15}) and $y_h$ is the OPGM approximation in
$V_{\kappa,h}^2$. The relative error in the $L^2$ norm is evaluated
numerically by the trapezoidal quadrature rule with 2048 uniformly
distributed notes, which is
\begin{equation}\label{R11E8}
\frac{\|y-y_{h}\|_2}{\|y\|_2}\approx
e_N:=\frac{\sqrt{7}}{4}\left(\sum_{j=1}^{2048}\frac{1}{2048}|y(s_j)-y_{h}(s_j)|^2
\right)^{1/2},
\end{equation}
where  $s_j=-1+\frac{j}{1024}$, for $j\in\bZ_{2049}^+$. We use C.O.
to denote the computed convergence order, which is defined by
\[
\text{C.O.}:=\log\left(\frac{e_N}{e_{2N}}\right)/\log(2).
\]
Likewise, we let $\bar{e}_N$ denote the relative error of the
approximate solution generated by the CGM
and its computed convergence order is defined accordingly.

\begin{table}
\caption{\footnotesize The related errors of the CGM (Left) and the OPGM (Right) with different values of
$\kappa$} \label{R9Ta6} \centering \small \subtable[\footnotesize
$\kappa=50$ ]{
\begin{tabular}{ccc||ccc}
$\tilde{N}$ &$\bar{e}_N$&C.O.& $\tilde{N}$ &$e_N$&C.O.\\
\hline
$66$&$4.23e-2$&$  $&$     54$&$1.36e-3$&$  $\\
$130$&$1.08e-2$&$1.98 $&$ 102$&$7.24e-4$&$0.92 $\\
$258$&$2.75e-3$&$1.97 $&$198$&$1.46e-4$&$2.31 $\\
$514$&$7.03e-4$&$1.97 $&$390$&$1.25e-5$&$3.54 $\\
$1026$&$1.93e-4$&$1.87 $&$774$&$1.20e-6$&$3.38 $\\
$2050$&$5.20e-5$&$1.89 $&$1542$&$1.17e-6$&$0.04 $\\
\end{tabular}
\label{R9Ta1}
}
\subtable[\footnotesize  $\kappa=500$]{
\begin{tabular}{ccc||ccc}
$\tilde{N}$&$\bar{e}_N$&C.O.&$\tilde{N}$&$e_N$&C.O.\\
\hline
$66$&$2.50e-1$&$ $&$54$&$1.67e-3$&$ $\\
$130$&$2.49e-1$&$0.00 $&$102$&$4.54e-4$&$1.88 $\\
$258$&$2.38e-1$&$0.06 $&$198$&$1.20e-4$&$1.92 $\\
$514$&$7.60e-2$&$1.65 $&$390$&$2.67e-5$&$2.16 $\\
$1026$&$1.93e-2$&$1.98 $&$774$&$1.98e-5$&$0.43 $\\
$2050$&$5.18e-3$&$1.90 $&$1542$&$3.06e-6$&$2.69 $\\
\end{tabular}
\label{R9Ta2}
}
\subtable[\footnotesize  $\kappa=5000$ ]{
\begin{tabular}{ccc||ccc}
$\tilde{N}$ &$\bar{e}_N$&C.O.&$\tilde{N}$ &$e_N$&C.O.\\
\hline
$66$&$2.50e-1$&$ $&$54$&$1.67e-3$&$ $\\
$130$&$2.50e-1$&$-0.00 $&$102$&$4.55e-4$&$1.87 $\\
$258$&$2.50e-1$&$-0.00 $&$198$&$1.19e-4$&$1.93 $\\
$514$&$2.50e-1$&$-0.00 $&$390$&$3.04e-5$&$1.97 $\\
$1026$&$2.41e-1$&$0.05 $&$774$&$7.69e-6$&$1.98 $\\
$2050$&$2.30e-1$&$0.07 $&$1542$&$1.93e-6$&$1.99 $\\
\end{tabular}
\label{R9Ta3}
}
\subtable[\footnotesize $\kappa=50000$ ]{
\begin{tabular}{ccc||ccc}
$\tilde{N}$ &$\bar{e}_N$&C.O.&$\tilde{N}$ &$e_N$&C.O.\\
\hline
$66$&$2.50e-1$&$  $&$54$&$1.67e-3$&$ $\\
$130$&$2.50e-1$&$-0.00 $&$102$&$4.55e-4$&$1.87 $\\
$258$&$2.50e-1$&$-0.00 $&$198$&$1.19e-4$&$1.93 $\\
$514$&$2.50e-1$&$-0.00 $&$390$&$3.04e-5$&$1.97 $\\
$1026$&$2.50e-1$&$-0.00 $&$774$&$7.70e-6$&$1.98 $\\
$2050$&$2.50e-1$&$0.00 $&$1542$&$1.94e-6$&$1.99 $\\
\end{tabular}
\label{R9Ta4}
}
\end{table}

\begin{figure}[!ht]
  \centering
  \includegraphics[width=4in,height=2.8in]{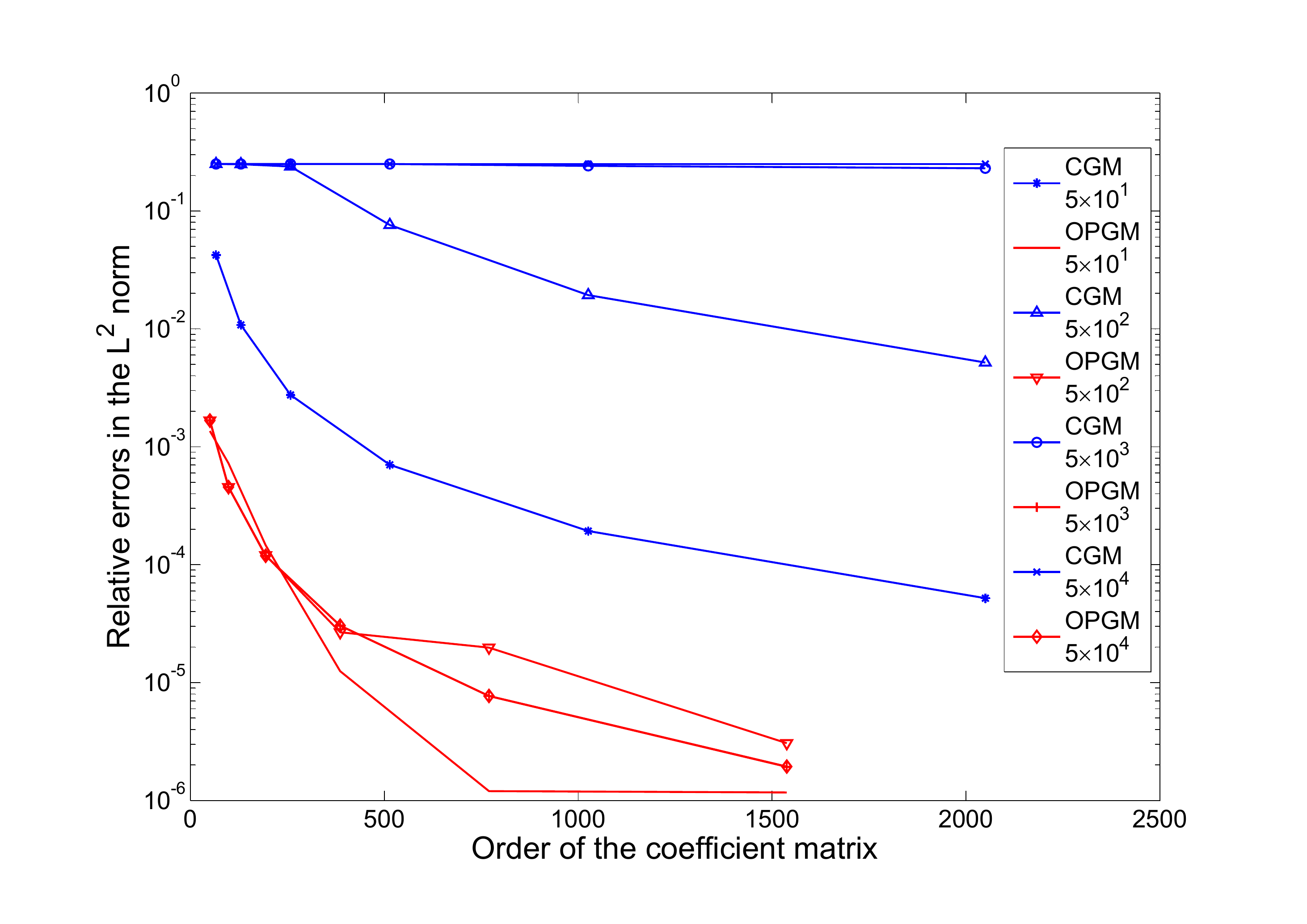}
  \caption{The relative errors in the $L^2$ norm versus the order of the matrix.
   The numbers in the legend are the values of the wavenumber.}\label{R18F1}
\end{figure}
\begin{figure}[!ht]
  \centering
  \includegraphics[width=4in,height=2.5in]{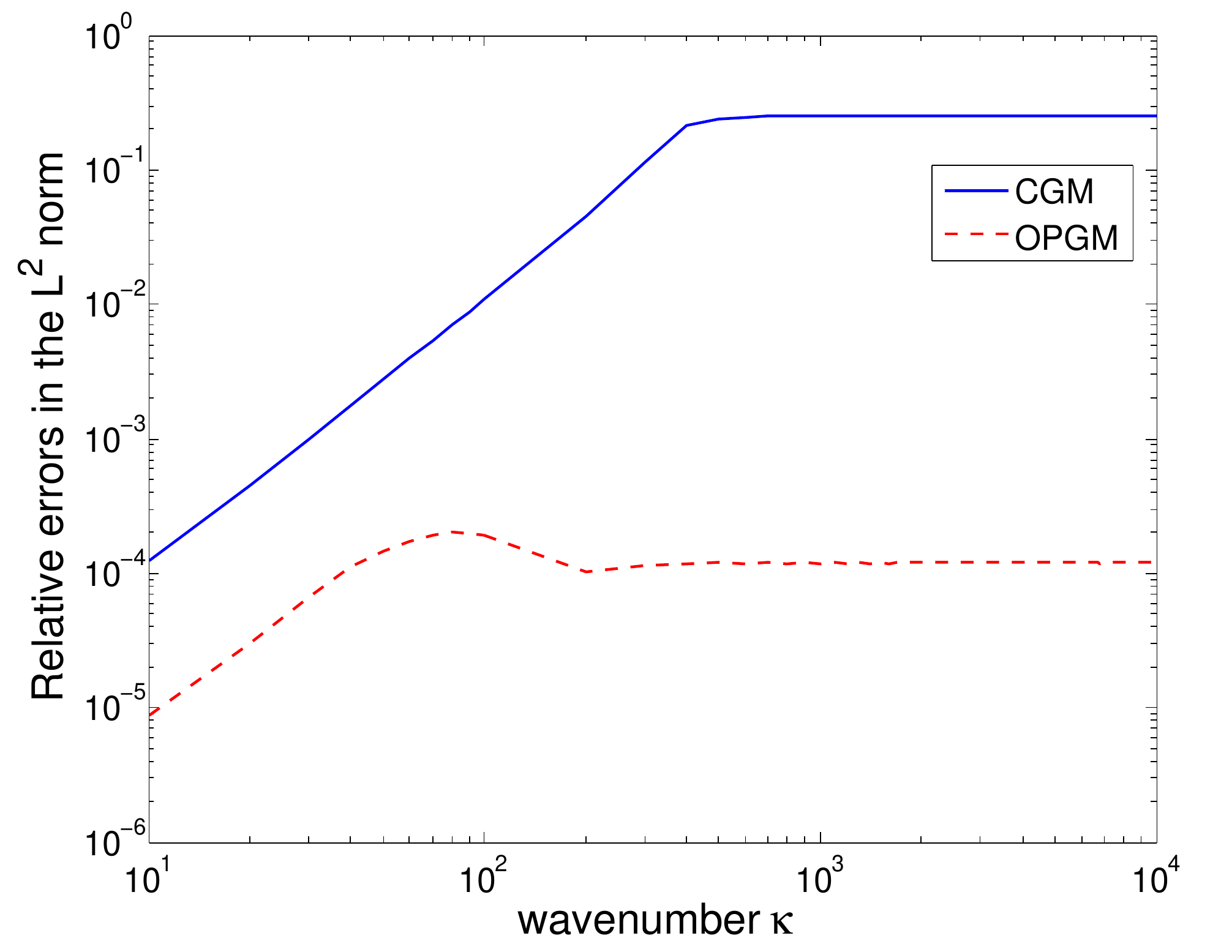}
  \caption{The relative errors in the $L^2$ norm  versus the wavenumber.
   The orders of the  matrices are 258 and 198 for the CGM and the OPGM, respectively.}
\label{R18F3}
\end{figure}
\begin{figure}[!ht]
  \centering
  \includegraphics[width=4in,height=2.5in]{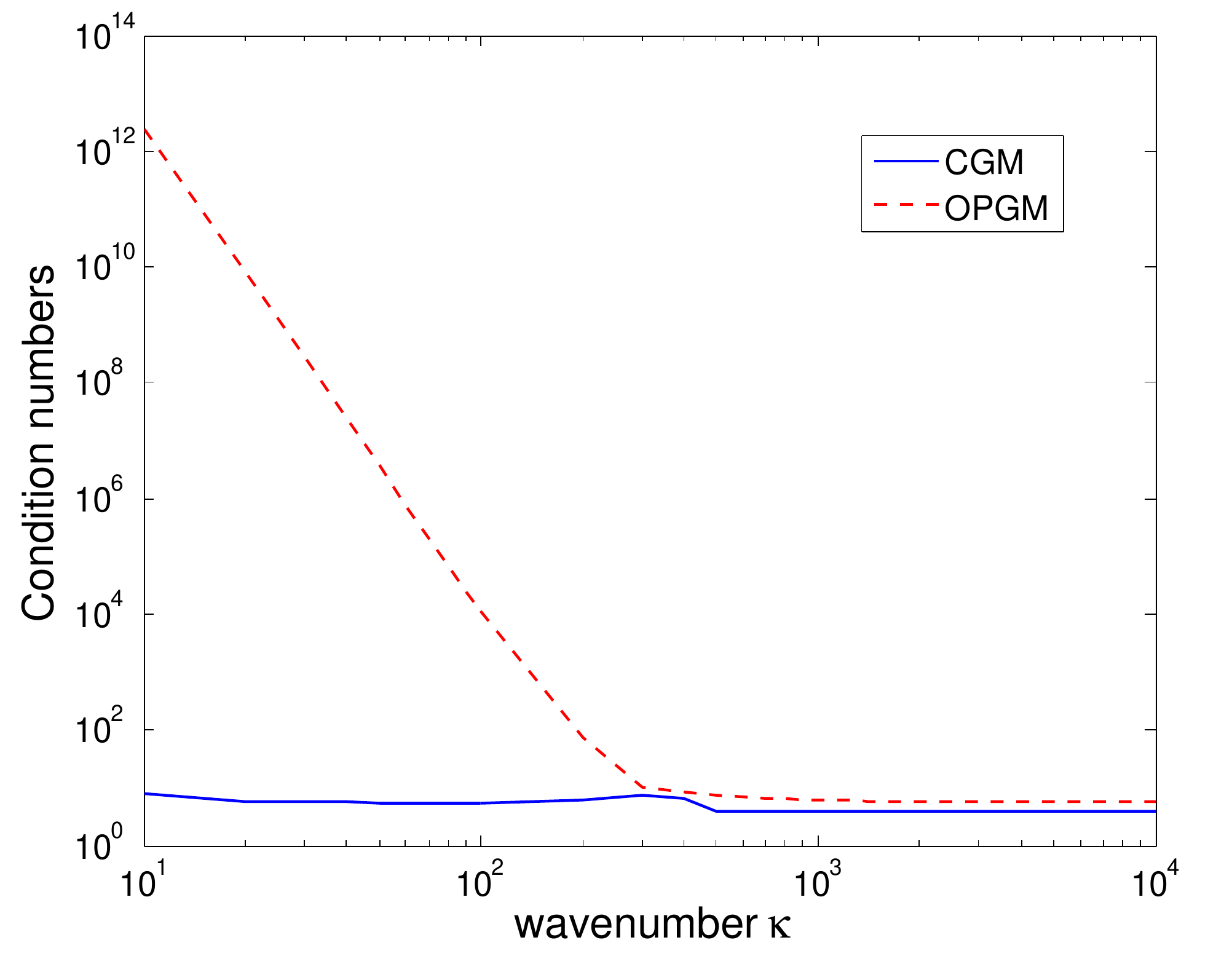}
  \caption{Condition numbers versus the wavenumber.
  The orders of the matrices are 258 and 198 for the CGM and the OPGM, respectively.}\label{R18F4}
\end{figure}

We perform two experiments. In experiment one, we solve equation
(\ref{R9E15}) with fixing the wavenumber $\kappa$ to $50, 500, 5000$
and $50000$ and varying the order $\tilde{N}$ of the coefficient
matrices. We report the numerical results of this experiment in
Table \ref{R9Ta6}. They are also plotted in Figure \ref{R18F1}.
These numerical results show that the OPGM are much more accurate
than the CGM for all chosen $\kappa$
values. It is also illustrated in Fig. \ref{R18F1}. In particular,
when $\kappa=5000$ or $50000$, the CGM
totally fails to solve the equation while the OPGM still preserves
the approximation accuracy. The convergence order of the OPGM is
shown to be $\bO(h^2)$ in Table \ref{R9Ta6} (c) and (d) when
$\kappa$ is large enough which verifies the result of Theorem
\ref{R9T8}.

In experiment two, we solve equation (\ref{R9E15}) with fixing the
order of the coefficient matrices and varying the wavenumber from 10
to $10^4$. The orders of the coefficient matrices for the
CGM and the OPGM are set to be 258 and 198,
respectively. We plot in Fig \ref{R18F3} the errors of both the
methods with respect to the wavenumber and in Fig \ref{R18F4} the
condition number of the coefficient matrix with respect to the
wavenumber.  From Fig \ref{R18F3}, we have two observations: The
CGM is greatly affected by the wavenumber
before the method fails to get an approximation with an acceptable
accuracy; the OPGM nearly keep the uniform error with respect to the
wavenumber when $\kappa$ is more than about 200. It verifies that
the convergence of the OPGM is independent of the wavenumber. In Fig
\ref{R18F4}, when $\kappa$ is larger than $300$, the condition
number of the coefficient matrix of the OPGM is uniformly bounded.
This is consistent with that of the CGM.
This confirms the theoretical result in Theorem \ref{R11T4}. Though
the CGM has the uniform condition number
bound independent of $\kappa$, it cannot give an appropriate
approximation when $\kappa$ becomes large. This is because the
standard spline spaces cannot approximate well an oscillatory
function. In summary, the OPGM
gives the optimal convergence order and at the same time preserves
the uniform condition number of the CGM.

To close this section, we note that to solve equation (\ref{R9E15}),
we need to evaluate three kinds of integrals to obtain the linear
system: (\romannumeral1) $(B_{l,2}e^{\tau_p i\kappa s}$,
$B_{j,2}e^{\tau_q i\kappa s}),$ $p\in\bZ_4^+$, $q\in\bZ_4^+$;
(\romannumeral2) $(f, B_{j,2}e^{\tau_p i\kappa s})$, $p\in\bZ_4^+$;
(\romannumeral3) $(\bK B_{l,2}e^{\tau_p i\kappa s}$,
$B_{j,2}e^{\tau_q i\kappa s})$, $p\in\bZ_4^+$, $q\in\bZ_4^+$, where
$B_{j,2}$ is a B-spline basis function and $\tau_p:=p-2$,
$p\in\bZ_4^+$. Integrals of class (\romannumeral1) can be evaluated
explicitly. The other two classes may be computed numerically. In
this paper, since $K(s,t)=1, (s,t)\in I^2$ and the expression of $f$
is simple, we can calculate the elements of the linear system
through explicit expressions with the help of (\ref{R8E37}).
However, for general $K$ and $f$, we have to compute oscillatory
integrals of classes (\romannumeral2) and (\romannumeral3)
numerically. Highly oscillatory integrals have been understood
deeply and calculated extremely efficiently by many methods, such as
asymptotic method, Filon-type method \cite{NORSETT2005,XIANG2007},
Levin-type method \cite{OLVER2006}, steepest descent method
\cite{HUYBRECHES2006} and Clenshaw-Curtis-Filon-type method
\cite{DOMINGUEZ2013,DOMINGUEZ2011,XIANG2011}. A new method is
proposed in \cite{MA2014} by using the idea of graded mesh to
analyze oscillatory integrals which is proved to be even more
efficient than most of the existing methods.

\section*{Acknowledgment}

This work was partially supported by the National Science Foundation
of USA under grants DMS-1115523, by the National Natural Science
Foundation of China under grants 11271370, 61171018, 11071286,
91130009 and 11101439, by Guangdong Provincial Government of China
through the Computational Science Innovative Research Team program,
the Doctor Program Foundation of Ministry of Education of China
under grant 20100171120038, and by the CSC Scholarship. The first
author would like to thank sincerely Prof. Jianshu Luo for his
generous support.

\bibliographystyle{plain}

\end{CJK}
\end{document}